\numberwithin{equation}{section}
\newtheorem{theorem}{Theorem}[section]
\newtheorem{proposition}{Proposition}[section]
\newtheorem{lemma}{Lemma}[section]
\newtheorem{corollary}{Corollary}[section]
\newtheorem{conjecture}{Conjecture}[section]
\theoremstyle{definition}
\newtheorem{definition}{Definition}[section]
\newtheorem{axiom}{Axiom}[section]
\newtheorem{remark}{Remark}[section]
\newtheorem{example}{Example}[section]
\newtheorem{exercise}{Exercise}[section]
\chardef\@x10\chardef\@xv60
\def\tcitime{
\def\@time{%
  \@minute\time\@hour\@minute\divide\@hour\@xv
  \ifnum\@hour<\@x 0\fi\the\@hour:%
  \multiply\@hour\@xv\advance\@minute-\@hour
  \ifnum\@minute<\@x 0\fi\the\@minute
  }}%
\def\QCTOpt[#1]#2{%
  \def\QCTOptB{#1}
  \def\QCTOptA{#2}
}
\def\QCTNOpt#1{%
  \def\QCTOptA{#1}
  \let\QCTOptB\empty
}
\def\Qct{%
  \@ifnextchar[{%
    \QCTOpt}{\QCTNOpt}
}
\def\QCBOpt[#1]#2{%
  \def\QCBOptB{#1}
  \def\QCBOptA{#2}
}
\def\QCBNOpt#1{%
  \def\QCBOptA{#1}
  \let\QCBOptB\empty
}
\def\Qcb{%
  \@ifnextchar[{%
    \QCBOpt}{\QCBNOpt}
}
\def\PrepCapArgs{%
  \ifx\QCBOptA\empty
    \ifx\QCTOptA\empty
      {}%
    \else
      \ifx\QCTOptB\empty
        {\QCTOptA}%
      \else
        [\QCTOptB]{\QCTOptA}%
      \fi
    \fi
  \else
    \ifx\QCBOptA\empty
      {}%
    \else
      \ifx\QCBOptB\empty
        {\QCBOptA}%
      \else
        [\QCBOptB]{\QCBOptA}%
      \fi
    \fi
  \fi
}
\def\GRAPHICSPS#1{%
 \ifcase\GRAPHICSTYPE
   \special{ps: #1}%
 \or
   \special{language "PS", include "#1"}%
 \fi
}%
\def\graffile#1#2#3#4{%
    \bgroup
    \leavevmode
    \@ifundefined{bbl@deactivate}{\def~{\string~}}{\activesoff}
    \raise -#4 \BOXTHEFRAME{%
        \hbox to #2{\raise #3\hbox to #2{\null #1\hfil}}}%
    \egroup
}%
\def\draftbox#1#2#3#4{%
 \leavevmode\raise -#4 \hbox{%
  \frame{\rlap{\protect\tiny #1}\hbox to #2%
   {\vrule height#3 width\z@ depth\z@\hfil}%
  }%
 }%
}%
\newif\ifwasdraft
\def\GRAPHIC#1#2#3#4#5{%
 \ifnum\draft=\@ne\draftbox{#2}{#3}{#4}{#5}%
  \else\graffile{#1}{#3}{#4}{#5}%
  \fi
 }%
\def\addtoLaTeXparams#1{%
    \edef\LaTeXparams{\LaTeXparams #1}}%
\newif\ifBoxFrame \BoxFramefalse
\newif\ifOverFrame \OverFramefalse
\newif\ifUnderFrame \UnderFramefalse
\def\BOXTHEFRAME#1{%
   \hbox{%
      \ifBoxFrame
         \frame{#1}%
      \else
         {#1}%
      \fi
   }%
}
\def\doFRAMEparams#1{\BoxFramefalse\OverFramefalse\UnderFramefalse\readFRAMEparams#1\end}%
\def\readFRAMEparams#1{%
 \ifx#1\end%
  \let\next=\relax
  \else
  \ifx#1i\dispkind=\z@\fi
  \ifx#1d\dispkind=\@ne\fi
  \ifx#1f\dispkind=\tw@\fi
  \ifx#1t\addtoLaTeXparams{t}\fi
  \ifx#1b\addtoLaTeXparams{b}\fi
  \ifx#1p\addtoLaTeXparams{p}\fi
  \ifx#1h\addtoLaTeXparams{h}\fi
  \ifx#1X\BoxFrametrue\fi
  \ifx#1O\OverFrametrue\fi
  \ifx#1U\UnderFrametrue\fi
  \ifx#1w
    \ifnum\draft=1\wasdrafttrue\else\wasdraftfalse\fi
    \draft=\@ne
  \fi
  \let\next=\readFRAMEparams
  \fi
 \next
 }%
\def\IFRAME#1#2#3#4#5#6{%
      \bgroup
      \let\QCTOptA\empty
      \let\QCTOptB\empty
      \let\QCBOptA\empty
      \let\QCBOptB\empty
      #6%
      \parindent=0pt%
      \leftskip=0pt
      \rightskip=0pt
      \setbox0 = \hbox{\QCBOptA}%
      \@tempdima = #1\relax
      \ifOverFrame
          \typeout{This is not implemented yet}%
          \show\HELP
      \else
         \ifdim\wd0>\@tempdima
            \advance\@tempdima by \@tempdima
            \ifdim\wd0 >\@tempdima
               \textwidth=\@tempdima
               \setbox1 =\vbox{%
                  \noindent\hbox to \@tempdima{\hfill\GRAPHIC{#5}{#4}{#1}{#2}{#3}\hfill}\\%
                  \noindent\hbox to \@tempdima{\parbox[b]{\@tempdima}{\QCBOptA}}%
               }%
               \wd1=\@tempdima
            \else
               \textwidth=\wd0
               \setbox1 =\vbox{%
                 \noindent\hbox to \wd0{\hfill\GRAPHIC{#5}{#4}{#1}{#2}{#3}\hfill}\\%
                 \noindent\hbox{\QCBOptA}%
               }%
               \wd1=\wd0
            \fi
         \else
            \ifdim\wd0>0pt
              \hsize=\@tempdima
              \setbox1 =\vbox{%
                \unskip\GRAPHIC{#5}{#4}{#1}{#2}{0pt}%
                \break
                \unskip\hbox to \@tempdima{\hfill \QCBOptA\hfill}%
              }%
              \wd1=\@tempdima
           \else
              \hsize=\@tempdima
              \setbox1 =\vbox{%
                \unskip\GRAPHIC{#5}{#4}{#1}{#2}{0pt}%
              }%
              \wd1=\@tempdima
           \fi
         \fi
         \@tempdimb=\ht1
         \advance\@tempdimb by \dp1
         \advance\@tempdimb by -#2%
         \advance\@tempdimb by #3%
         \leavevmode
         \raise -\@tempdimb \hbox{\box1}%
      \fi
      \egroup%
}%
\def\DFRAME#1#2#3#4#5{%
 \begin{center}
     \let\QCTOptA\empty
     \let\QCTOptB\empty
     \let\QCBOptA\empty
     \let\QCBOptB\empty
     \ifOverFrame 
        #5\QCTOptA\par
     \fi
     \GRAPHIC{#4}{#3}{#1}{#2}{\z@}
     \ifUnderFrame 
        \nobreak\par\nobreak#5\QCBOptA
     \fi
 \end{center}%
 }%
\def\FFRAME#1#2#3#4#5#6#7{%
 \begin{figure}[#1]%
  \let\QCTOptA\empty
  \let\QCTOptB\empty
  \let\QCBOptA\empty
  \let\QCBOptB\empty
  \ifOverFrame
    #4
    \ifx\QCTOptA\empty
    \else
      \ifx\QCTOptB\empty
        \caption{\QCTOptA}%
      \else
        \caption[\QCTOptB]{\QCTOptA}%
      \fi
    \fi
    \ifUnderFrame\else
      \label{#5}%
    \fi
  \else
    \UnderFrametrue%
  \fi
  \begin{center}\GRAPHIC{#7}{#6}{#2}{#3}{\z@}\end{center}%
  \ifUnderFrame
    #4
    \ifx\QCBOptA\empty
      \caption{}%
    \else
      \ifx\QCBOptB\empty
        \caption{\QCBOptA}%
      \else
        \caption[\QCBOptB]{\QCBOptA}%
      \fi
    \fi
    \label{#5}%
  \fi
  \end{figure}%
 }%
\def\makeactives{
  \catcode`\"=\active
  \catcode`\;=\active
  \catcode`\:=\active
  \catcode`\'=\active
  \catcode`\~=\active
}
   \gdef\activesoff{%
      \def"{\string"}
      \def;{\string;}
      \def:{\string:}
      \def'{\string'}
      \def~{\string~}
    }
\def\FRAME#1#2#3#4#5#6#7#8{%
 \bgroup
 \ifnum\draft=\@ne
   \wasdrafttrue
 \else
   \wasdraftfalse%
 \fi
 \def\LaTeXparams{}%
 \dispkind=\z@
 \def\LaTeXparams{}%
 \doFRAMEparams{#1}%
 \ifnum\dispkind=\z@\IFRAME{#2}{#3}{#4}{#7}{#8}{#5}\else
  \ifnum\dispkind=\@ne\DFRAME{#2}{#3}{#7}{#8}{#5}\else
   \ifnum\dispkind=\tw@
    \edef\@tempa{\noexpand\FFRAME{\LaTeXparams}}%
    \@tempa{#2}{#3}{#5}{#6}{#7}{#8}%
    \fi
   \fi
  \fi
  \ifwasdraft\draft=1\else\draft=0\fi{}%
  \egroup
 }%
\def\TEXUX#1{"texux"}
\def\func#1{\mathop{\rm #1}\nolimits}%
\long\def\QQQ#1#2{%
     \long\expandafter\def\csname#1\endcsname{#2}}%
\long\def\QQA#1#2{}%
\def\QTR#1#2{{\csname#1\endcsname #2}}
\def\EXPAND#1[#2]#3{}%
\def\NOEXPAND#1[#2]#3{}%
\def\LaTeXparent#1{}%
\def\ChildStyles#1{}%
\def\ChildDefaults#1{}%
\def\QTagDef#1#2#3{}%
  \providecommand{\UNICODE}[2][]{}
\def\QQfnmark#1{\footnotemark}
 \def\abstract{%
  \if@twocolumn
   \section*{Abstract (Not appropriate in this style!)}%
   \else \small 
   \begin{center}{\bf Abstract\vspace{-.5em}\vspace{\z@}}\end{center}%
   \quotation 
   \fi
  }%
   \def\registered{\relax\ifmmode{}\r@gistered
                    \else$\m@th\r@gistered$\fi}%
 \def\r@gistered{^{\ooalign
  {\hfil\raise.07ex\hbox{$\scriptstyle\rm\text{R}$}\hfil\crcr
  \mathhexbox20D}}}}{}%
\newdimen\theight
\def\Column{%
 \vadjust{\setbox\z@=\hbox{\scriptsize\quad\quad tcol}%
  \theight=\ht\z@\advance\theight by \dp\z@\advance\theight by \lineskip
  \kern -\theight \vbox to \theight{%
   \rightline{\rlap{\box\z@}}%
   \vss
   }%
  }%
 }%
\def\qed{%
 \ifhmode\unskip\nobreak\fi\ifmmode\ifinner\else\hskip5\p@\fi\fi
 \hbox{\hskip5\p@\vrule width4\p@ height6\p@ depth1.5\p@\hskip\p@}%
 }%
\def\miss{\hbox{\vrule height2\p@ width 2\p@ depth\z@}}%
\def\tcol#1{{\baselineskip=6\p@ \vcenter{#1}} \Column}  %
\def\newfmtname{LaTeX2e}
  \DeclareOldFontCommand{\rm}{\normalfont\rmfamily}{\mathrm}
  \DeclareOldFontCommand{\sf}{\normalfont\sffamily}{\mathsf}
  \DeclareOldFontCommand{\tt}{\normalfont\ttfamily}{\mathtt}
  \DeclareOldFontCommand{\bf}{\normalfont\bfseries}{\mathbf}
  \DeclareOldFontCommand{\it}{\normalfont\itshape}{\mathit}
  \DeclareOldFontCommand{\sl}{\normalfont\slshape}{\@nomath\sl}
  \DeclareOldFontCommand{\sc}{\normalfont\scshape}{\@nomath\sc}
\def\alpha{{\Greekmath 010B}}%
\def\beta{{\Greekmath 010C}}%
\def\gamma{{\Greekmath 010D}}%
\def\delta{{\Greekmath 010E}}%
\def\epsilon{{\Greekmath 010F}}%
\def\zeta{{\Greekmath 0110}}%
\def\eta{{\Greekmath 0111}}%
\def\theta{{\Greekmath 0112}}%
\def\iota{{\Greekmath 0113}}%
\def\kappa{{\Greekmath 0114}}%
\def\lambda{{\Greekmath 0115}}%
\def\mu{{\Greekmath 0116}}%
\def\nu{{\Greekmath 0117}}%
\def\xi{{\Greekmath 0118}}%
\def\pi{{\Greekmath 0119}}%
\def\rho{{\Greekmath 011A}}%
\def\sigma{{\Greekmath 011B}}%
\def\tau{{\Greekmath 011C}}%
\def\upsilon{{\Greekmath 011D}}%
\def\phi{{\Greekmath 011E}}%
\def\chi{{\Greekmath 011F}}%
\def\psi{{\Greekmath 0120}}%
\def\omega{{\Greekmath 0121}}%
\def\varepsilon{{\Greekmath 0122}}%
\def\vartheta{{\Greekmath 0123}}%
\def\varpi{{\Greekmath 0124}}%
\def\varrho{{\Greekmath 0125}}%
\def\varsigma{{\Greekmath 0126}}%
\def\varphi{{\Greekmath 0127}}%
\def\nabla{{\Greekmath 0272}}
\def\FindBoldGroup{%
   {\setbox0=\hbox{$\mathbf{x\global\edef\theboldgroup{\the\mathgroup}}$}}%
}
\def\Greekmath#1#2#3#4{%
    \if@compatibility
        \ifnum\mathgroup=\symbold
           \mathchoice{\mbox{\boldmath$\displaystyle\mathchar"#1#2#3#4$}}%
                      {\mbox{\boldmath$\textstyle\mathchar"#1#2#3#4$}}%
                      {\mbox{\boldmath$\scriptstyle\mathchar"#1#2#3#4$}}%
                      {\mbox{\boldmath$\scriptscriptstyle\mathchar"#1#2#3#4$}}%
        \else
           \mathchar"#1#2#3#4%
        \fi 
    \else 
        \FindBoldGroup
        \ifnum\mathgroup=\theboldgroup 
           \mathchoice{\mbox{\boldmath$\displaystyle\mathchar"#1#2#3#4$}}%
                      {\mbox{\boldmath$\textstyle\mathchar"#1#2#3#4$}}%
                      {\mbox{\boldmath$\scriptstyle\mathchar"#1#2#3#4$}}%
                      {\mbox{\boldmath$\scriptscriptstyle\mathchar"#1#2#3#4$}}%
        \else
           \mathchar"#1#2#3#4%
        \fi                 
          \fi}
\newif\ifGreekBold  \GreekBoldfalse
\let\SAVEPBF=\pbf
\def\pbf{\GreekBoldtrue\SAVEPBF}%
  \newcounter{equationnumber}  
  \def\mathletters{%
     \addtocounter{equation}{1}
     \edef\@currentlabel{\theequation}%
     \setcounter{equationnumber}{\c@equation}
     \setcounter{equation}{0}%
     \edef\theequation{\@currentlabel\noexpand\alph{equation}}%
  }
    \def\BibTeX{{\rm B\kern-.05em{\sc i\kern-.025em b}\kern-.08em
                 T\kern-.1667em\lower.7ex\hbox{E}\kern-.125emX}}}{}%
\def\AmS{{\protect\usefont{OMS}{cmsy}{m}{n}%
                A\kern-.1667em\lower.5ex\hbox{M}\kern-.125emS}}}{}%
\def\@@eqncr{\let\@tempa\relax
    \ifcase\@eqcnt \def\@tempa{& & &}\or \def\@tempa{& &}%
      \else \def\@tempa{&}\fi
     \@tempa
     \if@eqnsw
        \iftag@
           \@taggnum
        \else
           \@eqnnum\stepcounter{equation}%
        \fi
     \fi
     \global\tag@false
     \global\@eqnswtrue
     \global\@eqcnt\z@\cr}
\def\TCItag{\@ifnextchar*{\@TCItagstar}{\@TCItag}}
\def\@TCItag#1{%
    \global\tag@true
    \global\def\@taggnum{(#1)}}
\def\@TCItagstar*#1{%
    \global\tag@true
    \global\def\@taggnum{#1}}
\let\DOTSI\relax
\def\RIfM@{\relax\ifmmode}%
\def\FN@{\futurelet\next}%
\def\iint{\DOTSI\intno@\tw@\FN@\ints@}%
\def\iiint{\DOTSI\intno@\thr@@\FN@\ints@}%
\def\iiiint{\DOTSI\intno@4 \FN@\ints@}%
\def\idotsint{\DOTSI\intno@\z@\FN@\ints@}%
\def\ints@{\findlimits@\ints@@}%
\newif\iflimtoken@
\newif\iflimits@
\def\findlimits@{\limtoken@true\ifx\next\limits\limits@true
 \else\ifx\next\nolimits\limits@false\else
 \limtoken@false\ifx\ilimits@\nolimits\limits@false\else
 \ifinner\limits@false\else\limits@true\fi\fi\fi\fi}%
\def\multint@{\int\ifnum\intno@=\z@\intdots@                          
 \else\intkern@\fi                                                    
 \ifnum\intno@>\tw@\int\intkern@\fi                                   
 \ifnum\intno@>\thr@@\int\intkern@\fi                                 
 \int}
\def\multintlimits@{\intop\ifnum\intno@=\z@\intdots@\else\intkern@\fi
 \ifnum\intno@>\tw@\intop\intkern@\fi
 \ifnum\intno@>\thr@@\intop\intkern@\fi\intop}%
\def\intic@{%
    \mathchoice{\hskip.5em}{\hskip.4em}{\hskip.4em}{\hskip.4em}}%
\def\negintic@{\mathchoice
 {\hskip-.5em}{\hskip-.4em}{\hskip-.4em}{\hskip-.4em}}%
\def\ints@@{\iflimtoken@                                              
 \def\ints@@@{\iflimits@\negintic@
   \mathop{\intic@\multintlimits@}\limits                             
  \else\multint@\nolimits\fi                                          
  \eat@}
 \else                                                                
 \def\ints@@@{\iflimits@\negintic@
  \mathop{\intic@\multintlimits@}\limits\else
  \multint@\nolimits\fi}\fi\ints@@@}%
\def\intkern@{\mathchoice{\!\!\!}{\!\!}{\!\!}{\!\!}}%
\def\plaincdots@{\mathinner{\cdotp\cdotp\cdotp}}%
\def\intdots@{\mathchoice{\plaincdots@}%
 {{\cdotp}\mkern1.5mu{\cdotp}\mkern1.5mu{\cdotp}}%
 {{\cdotp}\mkern1mu{\cdotp}\mkern1mu{\cdotp}}%
 {{\cdotp}\mkern1mu{\cdotp}\mkern1mu{\cdotp}}}%
\def\RIfM@{\relax\protect\ifmmode}
\def\text{\RIfM@\expandafter\text@\else\expandafter\mbox\fi}
\let\nfss@text\text
\def\text@#1{\mathchoice
   {\textdef@\displaystyle\f@size{#1}}%
   {\textdef@\textstyle\tf@size{\firstchoice@false #1}}%
   {\textdef@\textstyle\sf@size{\firstchoice@false #1}}%
   {\textdef@\textstyle \ssf@size{\firstchoice@false #1}}%
   \glb@settings}
\def\textdef@#1#2#3{\hbox{{%
                    \everymath{#1}%
                    \let\f@size#2\selectfont
                    #3}}}
\newif\iffirstchoice@
\def\Let@{\relax\iffalse{\fi\let\\=\cr\iffalse}\fi}%
\def\vspace@{\def\vspace##1{\crcr\noalign{\vskip##1\relax}}}%
\def\multilimits@{\bgroup\vspace@\Let@
 \baselineskip\fontdimen10 \scriptfont\tw@
 \advance\baselineskip\fontdimen12 \scriptfont\tw@
 \lineskip\thr@@\fontdimen8 \scriptfont\thr@@
 \lineskiplimit\lineskip
 \vbox\bgroup\ialign\bgroup\hfil$\m@th\scriptstyle{##}$\hfil\crcr}%
\def\Sb{_\multilimits@}%
\def\endSb{\crcr\egroup\egroup\egroup}%
\def\Sp{^\multilimits@}%
\newdimen\ex@
\def\rightarrowfill@#1{$#1\m@th\mathord-\mkern-6mu\cleaders
 \hbox{$#1\mkern-2mu\mathord-\mkern-2mu$}\hfill
 \mkern-6mu\mathord\rightarrow$}%
\def\leftarrowfill@#1{$#1\m@th\mathord\leftarrow\mkern-6mu\cleaders
 \hbox{$#1\mkern-2mu\mathord-\mkern-2mu$}\hfill\mkern-6mu\mathord-$}%
\def\leftrightarrowfill@#1{$#1\m@th\mathord\leftarrow
\mkern-6mu\cleaders
 \hbox{$#1\mkern-2mu\mathord-\mkern-2mu$}\hfill
 \mkern-6mu\mathord\rightarrow$}%
\def\overrightarrow{\mathpalette\overrightarrow@}%
\def\overrightarrow@#1#2{\vbox{\ialign{##\crcr\rightarrowfill@#1\crcr
 \noalign{\kern-\ex@\nointerlineskip}$\m@th\hfil#1#2\hfil$\crcr}}}%
\def\overleftarrow{\mathpalette\overleftarrow@}%
\def\overleftarrow@#1#2{\vbox{\ialign{##\crcr\leftarrowfill@#1\crcr
 \noalign{\kern-\ex@\nointerlineskip}$\m@th\hfil#1#2\hfil$\crcr}}}%
\def\overleftrightarrow{\mathpalette\overleftrightarrow@}%
\def\overleftrightarrow@#1#2{\vbox{\ialign{##\crcr
   \leftrightarrowfill@#1\crcr
 \noalign{\kern-\ex@\nointerlineskip}$\m@th\hfil#1#2\hfil$\crcr}}}%
\def\underrightarrow{\mathpalette\underrightarrow@}%
\def\underrightarrow@#1#2{\vtop{\ialign{##\crcr$\m@th\hfil#1#2\hfil
  $\crcr\noalign{\nointerlineskip}\rightarrowfill@#1\crcr}}}%
\def\underleftarrow{\mathpalette\underleftarrow@}%
\def\underleftarrow@#1#2{\vtop{\ialign{##\crcr$\m@th\hfil#1#2\hfil
  $\crcr\noalign{\nointerlineskip}\leftarrowfill@#1\crcr}}}%
\def\underleftrightarrow{\mathpalette\underleftrightarrow@}%
\def\underleftrightarrow@#1#2{\vtop{\ialign{##\crcr$\m@th
  \hfil#1#2\hfil$\crcr
 \noalign{\nointerlineskip}\leftrightarrowfill@#1\crcr}}}%
\def\qopnamewl@#1{\mathop{\operator@font#1}\nlimits@}
\let\nlimits@\displaylimits
\def\setboxz@h{\setbox\z@\hbox}
\def\varlim@#1#2{\mathop{\vtop{\ialign{##\crcr
 \hfil$#1\m@th\operator@font lim$\hfil\crcr
 \noalign{\nointerlineskip}#2#1\crcr
 \noalign{\nointerlineskip\kern-\ex@}\crcr}}}}
 \def\rightarrowfill@#1{\m@th\setboxz@h{$#1-$}\ht\z@\z@
  $#1\copy\z@\mkern-6mu\cleaders
  \hbox{$#1\mkern-2mu\box\z@\mkern-2mu$}\hfill
  \mkern-6mu\mathord\rightarrow$}
\def\leftarrowfill@#1{\m@th\setboxz@h{$#1-$}\ht\z@\z@
  $#1\mathord\leftarrow\mkern-6mu\cleaders
  \hbox{$#1\mkern-2mu\copy\z@\mkern-2mu$}\hfill
  \mkern-6mu\box\z@$}
\def\projlim{\qopnamewl@{proj\,lim}}
\def\injlim{\qopnamewl@{inj\,lim}}
\def\varinjlim{\mathpalette\varlim@\rightarrowfill@}
\def\varprojlim{\mathpalette\varlim@\leftarrowfill@}
\def\varliminf{\mathpalette\varliminf@{}}
\def\varliminf@#1{\mathop{\underline{\vrule\@depth.2\ex@\@width\z@
   \hbox{$#1\m@th\operator@font lim$}}}}
\def\varlimsup{\mathpalette\varlimsup@{}}
\def\varlimsup@#1{\mathop{\overline
  {\hbox{$#1\m@th\operator@font lim$}}}}
\def\align{\@verbatim \frenchspacing\@vobeyspaces \@alignverbatim
You are using the "align" environment in a style in which it is not defined.}
\let\csname endalign*\endcsname =\endtrivlist
\def\alignat{\@verbatim \frenchspacing\@vobeyspaces \@alignatverbatim
You are using the "alignat" environment in a style in which it is not defined.}
\let\csname endalignat*\endcsname =\endtrivlist
\def\xalignat{\@verbatim \frenchspacing\@vobeyspaces \@xalignatverbatim
You are using the "xalignat" environment in a style in which it is not defined.}
\let\csname endxalignat*\endcsname =\endtrivlist
\def\gather{\@verbatim \frenchspacing\@vobeyspaces \@gatherverbatim
You are using the "gather" environment in a style in which it is not defined.}
\let\csname endgather*\endcsname =\endtrivlist
\def\multiline{\@verbatim \frenchspacing\@vobeyspaces \@multilineverbatim
You are using the "multiline" environment in a style in which it is not defined.}
\let\csname endmultiline*\endcsname =\endtrivlist
\def\arrax{\@verbatim \frenchspacing\@vobeyspaces \@arraxverbatim
You are using a type of "array" construct that is only allowed in AmS-LaTeX.}
\def\tabulax{\@verbatim \frenchspacing\@vobeyspaces \@tabulaxverbatim
You are using a type of "tabular" construct that is only allowed in AmS-LaTeX.}
\let\csname endarrax*\endcsname =\endtrivlist
\let\csname endtabulax*\endcsname =\endtrivlist
 \def\endequation{%
     \ifmmode\ifinner 
      \iftag@
        \addtocounter{equation}{-1} 
        $\hfil
           \displaywidth\linewidth\@taggnum\egroup \endtrivlist
        \global\tag@false
        \global\@ignoretrue   
      \else
        $\hfil
           \displaywidth\linewidth\@eqnnum\egroup \endtrivlist
        \global\tag@false
        \global\@ignoretrue 
      \fi
     \else   
      \iftag@
        \addtocounter{equation}{-1} 
        \eqno \hbox{\@taggnum}
        \global\tag@false%
        $$\global\@ignoretrue
      \else
        \eqno \hbox{\@eqnnum}
        $$\global\@ignoretrue
      \fi
     \fi\fi
 } 
 \newif\iftag@ \tag@false
 \def\TCItag{\@ifnextchar*{\@TCItagstar}{\@TCItag}}
 \def\@TCItag#1{%
     \global\tag@true
     \global\def\@taggnum{(#1)}}
 \def\@TCItagstar*#1{%
     \global\tag@true
     \global\def\@taggnum{#1}}
     \def\tag{\@ifnextchar*{\@tagstar}{\@tag}}
     \def\@tag#1{%
         \global\tag@true
         \global\def\@taggnum{(#1)}}
     \def\@tagstar*#1{%
         \global\tag@true
         \global\def\@taggnum{#1}}
\begin{document}
\date{December 19, 2010}
\title{Best approximation in max-plus semimodules}

\begin{abstract}
We establish new results concerning projectors on max-plus spaces, as well
as separating half-spaces, and derive an explicit formula for the distance
in Hilbert's projective metric between a point and a half-space over the
max-plus semiring, as well as explicit descriptions of the set of
minimizers. As a consequence, we obtain a cyclic projection type algorithm
to solve systems of max-plus linear inequalities.
\end{abstract}

\author{Marianne Akian}
\address{Marianne Akian, INRIA Saclay--\^Ile-de-France and CMAP, \'Ecole
Polytechnique. Address: CMAP, \'Ecole Polytechnique, Route de Saclay, 91128
Palaiseau Cedex, France.}
\email{Marianne.Akian@inria.fr}
\author{St\'{e}phane Gaubert}
\address{St\'ephane Gaubert, INRIA Saclay--\^Ile-de-France and CMAP, \'Ecole
Polytechnique. Address: CMAP, \'Ecole Polytechnique, Route de Saclay, 91128
Palaiseau Cedex, France.}
\email{Stephane.Gaubert@inria.fr}
\author{Viorel Ni\c{t}ic\u{a}}
\address{Viorel Ni\c{t}ic\u{a}, Department of Mathematics, West Chester University,
PA 19383, U. S. A. and Institute of Mathematics, P. O. Box 1-764, Bucharest,
Romania}
\email{vnitica@wcupa.edu}
\author{Ivan Singer}
\address{Ivan Singer, Institute of Mathematics, P. O. Box 1-764, Bucharest, Romania}
\email{ivan.singer@imar.ro}
\thanks{The four authors were partially supported by a LEA ``Math Mode'' grant for
2009-2010. The two first authors were also partially supported by the joint
RFBR-CNRS grant 05-01-02807 and by a MSRI Research membership for the Fall
2009 Semester on Tropical Geometry. The second author was also partially
supported by the Arpege programme of the French National Agency of Research
(ANR), project ``ASOPT'', number ANR-08-SEGI-005. Some results of this paper
have been presented at the Montr\'{e}al Workshop on Tropical and Idempotent
Mathematics, CRM/GERAD, June 29-July 3rd 2009.}
\keywords{Elements of best approximation, Max-plus algebra, Tropical algebra,
Hilbert's projective metric, Distance to half-space, Max-plus projectors,
Max-plus linear inequalities}
\maketitle

\section{Introduction}

Let $\mathbb{R}_{\max }$ denote the so-called \emph{max-plus algebra}\/,
which is the semiring composed of the set $\mathbb{R}\cup \{-\infty \}$
endowed with the maximization operation as addition $\mu \oplus \nu :=\max
(\mu ,\nu )$, the usual addition as multiplication $\mu \otimes \nu :=\mu
+\nu $ (also for $\mu =\nu =-\infty )$, and the neutral elements $-\infty $
and $0$ for addition $\oplus $ and multiplication $\otimes $ respectively.
We shall often denote the multiplication of $\mathbb{R}_{\max}$ by
concatenation (except when the omission of the symbol $\otimes$ leads to an
ambiguity).

The space $\mathbb{R}_{\max }^{n}$ of $n$-dimensional vectors, endowed
naturally with the pointwise addition (denoted also by $\oplus $) and the
multiplication of a vector by a scalar (denoted below by concatenation, with
the scalar on the right), is a semimodule (the analogue of a module) over $%
\mathbb{R}_{\max }$. It is also endowed with the following operation $%
\backslash $ which comes from the residuation of the map that multiplies a
scalar by a given vector (see Section~\ref{notations}): 
\begin{equation}
{x}\backslash {y}:=\sup \{\lambda \in \mathbb{R}_{\max }\mid x\lambda \leq
y\}\enspace,  \label{resid-vec-fin}
\end{equation}
where the order $\leq $ in~(\ref{resid-vec-fin}) is the usual partial order.

The most natural ``distance''~\cite{CGQ,Ben-01,GK,JSY} on the space $\mathbb{%
R}_{\max }^{n}$ is the (additive analogue of) Hilbert's projective distance $%
d$, which can be defined by 
\begin{equation}
d(x,y):=((x\backslash y)\otimes (y\backslash x))^{-},  \label{projdist}
\end{equation}
where the superscript $-$ means taking the usual opposite, that is, 
\begin{equation}
\lambda ^{-}:=-\lambda \quad \quad \forall \lambda \in \overline{\mathbb{R}}%
:=\mathbb{R}\cup \{-\infty ,+\infty \};  \label{oppo}
\end{equation}
note that here $d(x,y)\in \overline{\mathbb{R}}$. When the vectors $x$ and $%
y $ have only finite entries, 
\begin{equation*}
d(x,y)=\max_{i,j\in[n]}(x_i-y_i+y_j-x_j) \enspace,
\end{equation*}
where $[n]:=\{1,\dots,n\}$.

The same definition can be used on any residuated idempotent semimodule, and
it generalizes there the usual Hilbert projective metric considered on cones
of Banach spaces~\cite{birkhoff}: if $u,v$ are two vectors in the interior
of a closed convex pointed cone $C$ in such a space, the Hilbert projective
metric is classically defined by 
\begin{equation}
\func{Hilb}(u,v)=\min \log \{\frac{\mu }{\lambda }|\lambda >0,\mu >0,\quad
\lambda u\leq v\leq \mu u\}\enspace,  \label{e-birkhoff}
\end{equation}
where $u\leq v$ means that $v-u\in C$. When $\mathbb{R}^{n}$ is thought of
as the image of the interior of the standard positive cone by the map which
takes the logarithm entrywise, so that $x_{i}=\log u_{i}$ and $y_{i}=\log
v_{i}$, we get $d(x,y)=\func{Hilb}(u,v)$ (see~\cite[Section~3.3]{CGQ}).

If one avoids vectors with only infinite entries, then $d$ satisfies all the
properties of a projective distance, except that it may take infinite values
(see Section~\ref{notations}).

If $V$ is a subset of $\mathbb{R}_{\max }^{n}$, and $x\in \mathbb{R}_{\max
}^{n}$, one defines as for a usual distance: 
\begin{equation}
d(x,V):=\inf_{v\in V}d(x,v),  \label{D3}
\end{equation}
and we define an \emph{element of best approximation}\/, or a \emph{best
approximation}, of $x$ in $V,$ or a \emph{nearest point} to $x$ in $V,$ as
an element $v_{0}$ of $V$ such that 
\begin{equation}
d(x,v_{0})=d(x,V).  \label{D4}
\end{equation}

In the present paper we shall study the best approximation for Hilbert's
projective metric in \emph{\emph{b}-complete subsemimodules} of $\mathbb{R}%
_{\max }^{n}.$ We recall that any semimodule $V$ over $\mathbb{R}_{\max }$
is an idempotent monoid for its additive law, and is thus ``naturally''
ordered by the relation $\leq $ defined by 
\begin{equation}
x\leq y\Leftrightarrow x\oplus y=y,  \label{e-def-order}
\end{equation}
which is such that the supremum coincides with the addition $\oplus $ of the
semimodule. It is said to be \emph{\emph{b}-complete} if any subset of $V$
bounded from above has a supremum in $V$ and if the scalar multiplication
distributes over all such infinite sums (see Litvinov, Maslov and Shpiz~\cite
{LMS}). In particular, $\mathbb{R}_{\max }^{n}$ is a \emph{b}-complete
semimodule over $\mathbb{R}_{\max }$, and its natural order is the usual
partial order. A subsemimodule $V$ of $\mathbb{R}_{\max }^{n}$ is a \emph{%
\emph{b}-complete subsemimodule} of $\mathbb{R}_{\max }^{n}$ if the supremum
of any subset of $V$ bounded from above belongs to $V$.

Let us also recall that for a \emph{b}-complete subsemimodule $V$ of $%
\mathbb{R}_{\max }^{n}$ the \emph{canonical projection operator}\/ $P_{V}$
of $\mathbb{R}_{\max }^{n}$ onto $V$ is defined \cite{CGQ} by 
\begin{equation}
P_{V}(x):=\max \{v\in V|v\leq x\}, \quad \quad \forall x\in \mathbb{R}_{\max
}^{n},  \label{phx1}
\end{equation}
where max denotes a supremum which is attained (by some element of $V$).
Then (see \cite{CGQ,GK,JSY}) for any $x\in \mathbb{R}_{\max }^{n}$, $%
P_{V}(x) $ is a best approximation of $x$ in $V$ (such a best approximation
is not necessarily unique), that is, 
\begin{equation}
d(x,P_{V}(x))=d(x,V).  \label{phx21}
\end{equation}

Some of our results are inspired by -and bear some analogy with- those known
from the theory of best approximation in normed linear spaces by elements of
linear subspaces (see e.g.\ \cite{S}), reformulated in terms of the
``semi-scalar product'' (see e.g.\ \cite{L}). These analogies have led us
even to the discovery of some new properties of the canonical projections
onto semimodules (see e.g.\ Theorem~\ref{th1} and Corollaries~\ref{label2}, 
\ref{coromindist}, \ref{chop}).

The structure of the paper is as follows. \textbf{\ }

In the preliminary Section~\ref{notations} we give some notations, concepts and
facts that will be used in the sequel, concerning residuation for scalars,
vectors and matrices and its connections with the additions $+$ and $%
+^{\prime }$ on $\overline{\mathbb{R}},$ and the Hilbert projective distance 
$d$ and anti-distance $\delta $ on a complete semimodule $X,$ with special
emphasis on the particular cases $X=\overline{\mathbb{R}}_{\max }^{n}$ and $%
X=\mathbb{R}_{\max }^{n}.$

In Section~\ref{s03} we introduce the support, upper support and lower support and
the ``part'' of an element $x\in \overline{\mathbb{R}}_{\max }^{n}$ and we
show that with the aid of these concepts one can reduce the study of best
approximation of the elements $x\in \mathbb{R}_{\max }^{n}$ by the elements
of a \emph{b}-complete subsemimodule $V$ of $\mathbb{R}_{\max }^{n}$ to the
case where $x\in \mathbb{R}^{n}$ and $V\subset \mathbb{R}^{n}\cup \{-\infty
\}$, where $-\infty $ denotes the vector of $\mathbb{R}_{\max }^{n}$ with
all its entries equal to $-\infty $.

In Section~\ref{s04}, using the known fact \cite{CGQ,GK} that for every \emph{b}%
-complete subsemimodule $V$ of $\mathbb{R}_{\max }^{n}$ and every outside
point $x$ there exists a ``universal separating half-space'' $H=H_{V,x},$
defined with the aid of $P_{V}(x),$ satisfying $V\subseteq H_{V,x}$ and $%
x\in \mathbb{R}^{n}\backslash H_{V,x},$ we show that the problem of best
approximation of $x$ by elements of a \emph{b}-complete subsemimodule $V$ of 
$\mathbb{R}_{\max }^{n}$ can be reduced to the problem of best approximation
of $x$ by elements of a closed half-space $H$ of $\mathbb{R}_{\max }^{n}.$
To this end we prove the following properties of $H$: for each $x\in \mathbb{%
R}^{n}\backslash V$ we have $P_{V}(x)=P_{H}(x)$ and $d(x,V)=d(x,H).$ As in 
\cite{CGQ}, for more transparency we prove first corresponding results for
``complete subsemimodules'' of $\overline{\mathbb{R}}_{\max }^{n}$ and
separation by ``complete half-spaces'' of $\overline{\mathbb{R}}_{\max
}^{n}, $ from which we deduce the results on $\mathbb{R}_{\max }^{n}.$

In Section~\ref{s05} we prove for a closed half-space $H$ of $\mathbb{R}_{\max }^{n}$
and an outside point $x\in \mathbb{R}^{n}\backslash H$ a formula for the
distance $d(x,H),$ and we obtain a formula for the canonical projection $%
P_{H}(x)$ of $x$ onto $H.$

In Section~\ref{s06} we show that every closed half-space of $\mathbb{R}_{\max }^{n}$
admits a canonical representation with the aid of coefficients with disjoint
supports, and we particularize this result to obtain the canonical form of
the universal separating closed half-space of a \emph{b}-complete
subsemimodule $V$ of $\mathbb{R}_{\max }^{n}$ from a point $x\notin V.$ The
latter canonical form shows that when the canonical projection of $x$ onto $%
V $ is finite, the universal separating closed half-spaces always have
``finite apex''.

In Section~\ref{s07}, using the results of Section~\ref{s06}, we give characterizations of
the elements of best approximation by arbitrary half-spaces (not necessarily
with finite apex) for an element $x\in \mathbb{R}_{\max }^{n}$. At the end
of the section we also give geometric interpretations in simple particular
cases.

Finally, in Section~\ref{s08}, as an application of the main distance formula of
Section~\ref{s05}, we obtain a new algorithm to solve systems of max-plus linear
inequalities $Ax\geq Bx$, where $A,B$ are $p\times n$ matrices. This
algorithm uses the technique of cyclic projectors \cite{GS}; it may be
thought of as a max-plus analogue of the Gauss-Seidel algorithm, and it is
shown to be faster than the earlier alternated projection algorithm of~\cite
{CGB}, although it remains only pseudo-polynomial.

Let us mention that the results on $X=\overline{\mathbb{R}}_{\max }^{n}$ of
this paper can be extended to more general assumptions on a complete
semimodule $X.$ To this end, one needs to extend the concept of ``opposite'' 
$\lambda ^{-}$ of (\ref{oppo}). A rather complete theory of an extension of
the ``opposite'' is developed in \cite{CGQ}, but we shall not pursue here
that level of generality.

\section{Notations and preliminaries}

\label{notations}

\subsection{Residuation}

As mentioned above, we denote by $\mathbb{R}_{\max }$ the semiring composed
of the set $\mathbb{R}\cup \{-\infty \}$ endowed with the maximization
operation as addition $\mu \oplus \nu :=\max (\mu ,\nu )$, the usual
addition as multiplication $\mu \otimes \nu :=\mu +\nu $ (also for $\mu =\nu
=-\infty )$, and the neutral elements $-\infty $ and $0$ for addition $%
\oplus $ and multiplication $\otimes $ respectively. Furthermore, we shall
denote by $\overline{\mathbb{R}}_{\max }$ the so-called \emph{complete
max-plus algebra}\/, which is the semiring composed of the set $\overline{%
\mathbb{R}}:=\mathbb{R}\cup \{-\infty ,+\infty \}$ endowed with the
maximization operation as addition, that is, 
\begin{equation*}
\mu \oplus \nu :=\max (\mu ,\nu ),
\end{equation*}
and with the extension to $\overline{\mathbb{R}}$ of the usual addition $+$
of $\mathbb{R}\cup \{-\infty \}$ as multiplication $\mu \otimes \nu =\mu
+\nu ,$ by the convention 
\begin{equation}
a+(+\infty )=(+\infty )+a=\left\{ 
\begin{array}{l}
+\infty \quad \text{if }a\in \mathbb{R}\cup \{+\infty \} \\ 
-\infty \quad \text{if }a=-\infty .
\end{array}
\right.  \label{conven}
\end{equation}

Throughout this paper we shall consider the space $\mathbb{R}_{\max }^{n}$
(respectively, $\overline{\mathbb{R}}_{\max }^{n})$ of all $n$-dimensional
column vectors $x=(x_{1},\dots,x_{n})^{T},$ where $x_{1},\dots,x_{n}$ belong
to $\mathbb{R}_{\max }$ (respectively, $\overline{\mathbb{R}}_{\max })$ and
the superscript $\cdot^T$ denotes the transposition operation, endowed
naturally with the pointwise addition (denoted by $\oplus $) and
multiplication by a scalar, that we shall denote by a concatenation on the
right. This is a semimodule over $\mathbb{R}_{\max }$ (respectively, $%
\overline{\mathbb{R}}_{\max })$. We shall denote such column vectors, or
equivalently, $n\times 1$ matrices, by the letters $x,y,z,u,h$,\dots We
shall also consider matrices over $\mathbb{R}_{\max }$ and $\overline{%
\mathbb{R}}_{\max },$ denoted by capital letters $A,B,\dots$ and employ the
usual concatenation notation for product of matrices, as well as for the
multiplication of an element of $\mathbb{R}_{\max }^{n}$ (or $\overline{%
\mathbb{R}}_{\max }^{n})$ by a scalar, that we shall put on the right (as if
scalars were one dimensional square matrices). So if $x=(x_{1},\ldots
,x_{n})^{T}\in \mathbb{R}_{\max }^{n}$ (or $\overline{\mathbb{R}}_{\max
}^{n}),$ and $\lambda \in \mathbb{R}_{\max }$ (respectively $\overline{%
\mathbb{R}}_{\max })$, then $x\lambda $ is the vector $(x_{1}+\lambda
,\ldots ,x_{n}+\lambda )^{T}$ (the notation $x+\lambda $ is also used in the
literature).

As in usual algebra, any max-plus linear operator $\phi $ from $\mathbb{R}%
_{\max }^{n}$ to $\mathbb{R}_{\max }^{m}$ (respectively $\overline{\mathbb{R}%
}_{\max }^{n}$ to $\overline{\mathbb{R}}_{\max }^{m}$), i.e., satisfying $%
\phi (x\oplus y)=\phi (x)\oplus \phi (y)$ for all $x,y\in \mathbb{R}_{\max
}^{n}$ (respectively $\overline{\mathbb{R}}_{\max }^{n})$ and $\phi
(x\lambda )=\phi (x)\lambda $ for all $x\in \mathbb{R}_{\max }^{n}\;(%
\overline{\mathbb{R}}_{\max }^{n})$ and $\lambda \in \mathbb{R}_{\max }$ $(%
\overline{\mathbb{R}}_{\max })$ can be represented by (and identified to) a $%
m\times n$ matrix $A=(A_{ij})_{i\in \lbrack m],j\in \lbrack n]}$ over $%
\mathbb{R}_{\max }$ (respectively $\overline{\mathbb{R}}_{\max }$), with $%
\phi (x)=Ax$, that is $\phi (x)_{i}=\max_{j\in \lbrack n]}(A_{ij}+x_{j})$
for $i\in [m]$ (see~\cite{BCOQ}). In particular, when $m=1,$ the dual space $%
(\mathbb{R}_{\max }^{n})^{\ast }$ (respectively $(\overline{\mathbb{R}}%
_{\max }^{n})^{\ast })$ of all max-plus linear forms over $\mathbb{R}_{\max
}^{n}$ (respectively $\overline{\mathbb{R}}_{\max }^{n}$), that is, of all
max-plus linear functions $(\mathbb{R}_{\max }^{n})^{\ast }\rightarrow 
\mathbb{R}_{\max }$ (respectively $(\overline{\mathbb{R}}_{\max }^{n})^{\ast
}\rightarrow \overline{\mathbb{R}}_{\max })$ is isomorphic, and shall be
identified, with the space of all $n$-dimensional row vectors, or
equivalently, $1\times n$ matrices, having their entries in $\mathbb{R}%
_{\max }$ (respectively, $\overline{\mathbb{R}}_{\max })$, which we shall
denote by $a=(a_{1},\dots,a_{n}),b$, \dots

Spaces of scalars, vectors and matrices over $\mathbb{R}_{\max }$ ($%
\overline{\mathbb{R}}_{\max })$ are idempotent monoids with respect to
addition and their ``natural order'' for which the supremum operation is
equivalent to the addition of the monoid, and that order coincides with the
usual partial order. They are \emph{b}-complete (complete) semimodules over $%
\mathbb{R}_{\max }$ ($\overline{\mathbb{R}}_{\max }$), in the sense that
will be recalled below. This allows one to define the residuation operation $%
A\backslash B$ for any matrices $A\in \overline{\mathbb{R}}_{\max }^{n\times
m}$ and $B\in \overline{\mathbb{R}}_{\max }^{n\times p}$ by 
\begin{equation}
{A}\backslash {B}:=\max \{C\in \overline{\mathbb{R}}_{\max }^{m\times p}\mid
AC\leq B\},  \label{def-resid}
\end{equation}
where the max means that that the supremum is attained; in particular, for
any scalars $\mu ,\nu \in \overline{\mathbb{R}}_{\max },$%
\begin{equation}
\mu \backslash \nu :=\max \{\lambda \in \overline{\mathbb{R}}_{\max }\mid
\mu \otimes \lambda \leq \nu \}.  \label{def-resid2}
\end{equation}
Since semimodules of matrices with entries in $\mathbb{R}_{\max }$ are not
complete but only \emph{b}-complete, the residuation $A\backslash B$ of
matrices $A\in \mathbb{R}_{\max }^{n\times m}$ and $B\in \mathbb{R}_{\max
}^{n\times p}$ is not necessarily in $\mathbb{R}_{\max }^{m\times p}$;
however one can replace the maximum in~the definition (\ref{def-resid}) of $%
A\backslash B$ by the supremum in $\mathbb{R}_{\max }^{m\times p}$, as in (%
\ref{resid-vec-fin}).

Let us denote by $\overline{\mathbb{R}}_{\min }$ the so-called \emph{%
complete min-plus algebra}, which is by definition the semiring composed of
the set $\overline{\mathbb{R}}$ endowed with the minimization operation as
addition $\mu \oplus ^{\prime }\nu ,$ that is, 
\begin{equation*}
\mu \oplus ^{\prime }\nu :=\min (\mu ,\nu ),
\end{equation*}
and with the extension to $\overline{\mathbb{R}}$ of the usual addition $+$
of $\mathbb{R}\cup \{+\infty \}$ as multiplication $\mu \otimes ^{\prime
}\nu =\mu +^{\prime }\nu ,$ defined by the convention opposite to~(\ref
{conven}), namely: 
\begin{equation}
a+^{\prime }(-\infty )=(-\infty )+^{\prime }a=\left\{ 
\begin{array}{l}
+\infty \quad \text{if }a=+\infty \\ 
-\infty \quad \text{if }a\in \mathbb{R}\cup \{-\infty \}.
\end{array}
\right.  \label{conven-min}
\end{equation}
The neutral elements of $\overline{\mathbb{R}}_{\min }$ are necessarily $%
+\infty $ and $0$ for addition $\oplus ^{\prime }=\min $ and multiplication $%
\otimes ^{\prime }=+^{\prime }$ respectively.

\begin{remark}
\label{rmor}\emph{a)} The above operations $\otimes =+$ and $\otimes
^{\prime }=+^{\prime }$ are nothing else than the ``lower addition'' $%
\underset{\cdot }{+}$ and ``upper addition'' $\overset{\cdot }{+}$ on $%
\overline{\mathbb{R}}$ respectively, introduced by Moreau (see e.g.~\cite
{mor}) and used extensively in convex analysis. This remark permits to
extend the well-known results about $\underset{\cdot }{+}$ and $\overset{%
\cdot }{+}$ on $\overline{\mathbb{R}}$ to the lower and upper product $%
\underset{\cdot }{\otimes }$ and $\overset{\cdot }{\otimes }$ respectively,
on any complete semifield $\mathbb{S}$, using the known rules for these
operations (see e.g.\ \cite{ak-sin}).

\emph{b)} Here we consider mainly operations of $\overline{\mathbb{R}}_{\max
}$, whereas those of $\overline{\mathbb{R}}_{\min }$ are considered as dual
ones, hence the notations $+$ and $+^{\prime }$. Such ``dual'' notations
were already used in the literature, e.g.\ in~\cite{cuning79}.
\end{remark}

We recall the following well-known rules of computation with $+$ and $%
+^{\prime }$ on $\overline{\mathbb{R}}$:

\begin{lemma}
\label{lthree}(\cite{mor}, formulas (2.1) and (2.3)). For any $\lambda ,\mu
,\nu \in \overline{\mathbb{R}}$ we have 
\begin{align}
-(\mu +^{\prime }\nu )&=-\mu +(-\nu ),  \label{sim} \\
(\lambda +^{\prime }\mu )+^{\prime }\nu &=\lambda +^{\prime }(\mu +^{\prime
}\nu ).  \notag
\end{align}
\end{lemma}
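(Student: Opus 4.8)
The plan is to establish the De Morgan-type identity~(\ref{sim}) first by a short case analysis, and then to deduce the associativity of $+^{\prime }$ essentially for free from the associativity of the lower addition $+$, which is available since $+$ is the multiplication $\otimes $ of the semiring $\overline{\mathbb{R}}_{\max }$.

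Write $\sigma (\lambda ):=-\lambda =\lambda ^{-}$ for the negation defined in~(\ref{oppo}); it is an involution of $\overline{\mathbb{R}}$ that interchanges $+\infty $ and $-\infty $ and restricts to ordinary negation on $\mathbb{R}$. Identity~(\ref{sim}) asserts exactly that $\sigma $ transports the upper addition to the lower one, namely $\sigma (\mu +^{\prime }\nu )=\sigma (\mu )+\sigma (\nu )$ for all $\mu ,\nu \in \overline{\mathbb{R}}$. To prove this I would argue by cases according to whether $\mu $ and $\nu $ are finite or infinite. When both are finite, $+$, $+^{\prime }$ and $\sigma $ all reduce to the ordinary operations of $\mathbb{R}$, and the identity is the elementary $-(\mu +\nu )=(-\mu )+(-\nu )$. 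There remain finitely many cases involving $\pm \infty $, dispatched by inspecting the conventions~(\ref{conven}) and~(\ref{conven-min}). The only case in which $+$ and $+^{\prime }$ genuinely disagree is the indeterminate pair $\{+\infty ,-\infty \}$: there $(+\infty )+^{\prime }(-\infty )=+\infty $ by~(\ref{conven-min}), so the left-hand side of~(\ref{sim}) equals $\sigma (+\infty )=-\infty $, while its right-hand side is $(-\infty )+(+\infty )=-\infty $ by~(\ref{conven}); the two agree. The remaining infinite cases (two equal infinities, or one infinity together with a finite scalar) involve no indeterminacy and check immediately, so~(\ref{sim}) holds throughout.

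For the associativity of $+^{\prime }$ I would apply $\sigma $ to both sides and use~(\ref{sim}) twice. On the one hand $\sigma \bigl((\lambda +^{\prime }\mu )+^{\prime }\nu \bigr)=\sigma (\lambda +^{\prime }\mu )+\sigma (\nu )=\bigl(\sigma (\lambda )+\sigma (\mu )\bigr)+\sigma (\nu )$, and on the other $\sigma \bigl(\lambda +^{\prime }(\mu +^{\prime }\nu )\bigr)=\sigma (\lambda )+\bigl(\sigma (\mu )+\sigma (\nu )\bigr)$. Since $+$ is associative, these two expressions are equal, so $\sigma $ sends the two sides of the claimed identity to the same element; as $\sigma $ is an involution, hence injective, the two sides themselves coincide.

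The only delicate point is the bookkeeping in the case analysis for~(\ref{sim}): one must keep track of the fact that the lower and upper additions resolve the ambiguous sum $(+\infty )+(-\infty )$ in \emph{opposite} ways, and verify that this is precisely what makes conjugation by $\sigma $ turn one addition into the other. Once~(\ref{sim}) is secured, the associativity of $+^{\prime }$ requires no further case work, being inherited directly from that of $+$.
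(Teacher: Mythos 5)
Your proof is correct, but note that the paper itself gives no argument at all for this lemma: it is stated as a citation to Moreau's formulas (2.1) and (2.3) in \cite{mor}, so there is no internal proof to compare against. What you supply is a self-contained elementary verification, and it is organized efficiently: you check the De Morgan-type identity~(\ref{sim}) by a finite case analysis (correctly isolating the only genuinely ambiguous pair $\{+\infty,-\infty\}$, where the conventions~(\ref{conven}) and~(\ref{conven-min}) resolve the sum in opposite ways but the two sides still agree), and you then obtain the associativity of $+^{\prime}$ with no further case work by conjugating with the involution $\lambda\mapsto\lambda^{-}$ and invoking the associativity of $+$, which the paper already presupposes as the multiplication of the semiring $\overline{\mathbb{R}}_{\max}$. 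This transport-of-structure step is exactly the observation the paper makes right after the lemma (that the opposite map is an isomorphism from $\overline{\mathbb{R}}_{\max}$ to $\overline{\mathbb{R}}_{\min}$), so your argument in effect proves that remark and then uses it, rather than quoting both identities from the literature; the price is the bookkeeping in the case analysis, and the gain is that the paper becomes self-contained at this point.
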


By (\ref{sim}), the semiring $\overline{\mathbb{R}}_{\min }$ can also be
defined equivalently as the image of $\overline{\mathbb{R}}_{\max }$ by the
``opposite'' map $\overline{\mathbb{R}}\rightarrow \overline{\mathbb{R}}%
,\;x\mapsto x^{-}$ with $x^{-}$ defined as in~(\ref{oppo}), which means that
the opposite map is an isomorphism of complete semirings from $\overline{%
\mathbb{R}}_{\max }$ to $\overline{\mathbb{R}}_{\min }$.

For the basic rules of computation with residuation of scalars and their
extensions to residuation of vectors and matrices see e.g.\ \cite{BCOQ,CGQ}.

Let us give now some new properties of the residuation of scalars that we
shall use later.

\begin{proposition}
\label{lone}For $\mu ,\nu \in \overline{\mathbb{R}}_{\max }$, we have 
\begin{equation}
{\mu }\backslash {\nu }=\nu +^{\prime }(-\mu ),  \label{resid-scalar-usual}
\end{equation}
with $+^{\prime }$ of \emph{(\ref{conven-min})}.
\end{proposition}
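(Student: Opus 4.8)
The goal is to prove that for $\mu,\nu\in\overline{\mathbb{R}}_{\max}$,
\begin{equation*}
\mu\backslash\nu = \nu +^{\prime}(-\mu),
\end{equation*}
where $\mu\backslash\nu := \max\{\lambda\in\overline{\mathbb{R}}_{\max}\mid \mu\otimes\lambda\leq\nu\}$ by~(\ref{def-resid2}), and $+^{\prime}$ is the upper addition governed by the convention~(\ref{conven-min}). Since $\mu\otimes\lambda=\mu+\lambda$ with the max-plus (lower-addition) convention~(\ref{conven}), the residuated set is $\{\lambda\mid \mu+\lambda\leq\nu\}$, and the plan is simply to compute its supremum and verify the supremum is attained, treating the infinite cases with care.

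The approach I would take is a direct case analysis on the values of $\mu$ and $\nu$. The clean intermediate step is to observe that solving $\mu+\lambda\leq\nu$ for $\lambda$ should give $\lambda\leq\nu+^{\prime}(-\mu)$, i.e.\ the largest admissible $\lambda$ is exactly $\nu-\mu$ interpreted via upper addition. First I would dispatch the generic finite case: if $\mu,\nu\in\mathbb{R}$, then $\mu+\lambda\leq\nu\iff\lambda\leq\nu-\mu$, so the maximum is $\nu-\mu=\nu+^{\prime}(-\mu)$, attained at $\lambda=\nu-\mu\in\mathbb{R}\subseteq\overline{\mathbb{R}}_{\max}$. Then I would handle the boundary values of $\mu$. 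If $\mu=-\infty$ (the zero of $\overline{\mathbb{R}}_{\max}$), then $\mu\otimes\lambda=-\infty+\lambda$; by~(\ref{conven}) this equals $-\infty$ for every $\lambda\in\overline{\mathbb{R}}$ (including $\lambda=+\infty$, since $-\infty+(+\infty)=-\infty$), hence $\mu\otimes\lambda\leq\nu$ holds for all $\lambda$ and the maximum is $+\infty$; on the right side $-\mu=+\infty$ and $\nu+^{\prime}(+\infty)=+\infty$ by the min-plus convention~(\ref{conven-min}), matching. If $\mu=+\infty$, then $-\mu=-\infty$ and $\mu\otimes\lambda=(+\infty)+\lambda$ equals $+\infty$ unless $\lambda=-\infty$, in which case it is $-\infty$; so $\mu\otimes\lambda\leq\nu$ forces $\lambda=-\infty$ whenever $\nu<+\infty$, giving max $=-\infty=\nu+^{\prime}(-\infty)$ (by~(\ref{conven-min}), $\nu+^{\prime}(-\infty)=-\infty$ for $\nu\in\mathbb{R}\cup\{-\infty\}$), while if $\nu=+\infty$ every $\lambda$ works and the max is $+\infty=(+\infty)+^{\prime}(-\infty)$.

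The remaining cases vary $\nu$ over $\{-\infty,+\infty\}$ with $\mu\in\mathbb{R}$: if $\nu=+\infty$ the inequality $\mu+\lambda\leq+\infty$ holds for all $\lambda$, so the max is $+\infty=(+\infty)+^{\prime}(-\mu)$; if $\nu=-\infty$ then $\mu+\lambda\leq-\infty$ forces $\lambda=-\infty$ (as $\mu$ is finite), giving max $=-\infty=(-\infty)+^{\prime}(-\mu)$. In each branch the supremum is realized by an explicit element of $\overline{\mathbb{R}}_{\max}$, so the ``$\max$'' in~(\ref{def-resid2}) is indeed attained, consistent with the completeness of $\overline{\mathbb{R}}_{\max}$.

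The only genuinely delicate point is bookkeeping the two different infinity conventions: the product $\mu\otimes\lambda$ uses the \emph{lower} addition~(\ref{conven}) (so $-\infty$ absorbs), whereas the right-hand side $\nu+^{\prime}(-\mu)$ uses the \emph{upper} addition~(\ref{conven-min}) (so $+\infty$ absorbs). The potential trap is the interaction $(+\infty)+(-\infty)$, which appears when $\mu=+\infty$ and $\lambda=-\infty$, and the mixed evaluation on the right when $\mu$ is infinite. I would guard against sign errors here by invoking Lemma~\ref{lthree}, formula~(\ref{sim}), namely $-(\mu+^{\prime}\nu)=(-\mu)+(-\nu)$, which is precisely the compatibility that makes the two conventions dual; alternatively one can note that the opposite map is an isomorphism $\overline{\mathbb{R}}_{\max}\to\overline{\mathbb{R}}_{\min}$, so that ``solve $\mu+\lambda\leq\nu$ for the largest $\lambda$'' transports to the min-plus side as $\nu+^{\prime}(-\mu)$. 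This duality is the conceptual heart; once it is in place the case check is routine and the identity follows.
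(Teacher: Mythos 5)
Your proof is correct, but it takes a genuinely different route from the paper's. The paper's proof is a two-line reduction: it rewrites ${\mu}\backslash{\nu}$ via (\ref{def-resid2}) as $\max\{\lambda\in\overline{\mathbb{R}}\mid \mu+\lambda\leq\nu\}$ and then invokes Moreau's result (\cite{mor}, p.~119, Proposition 3(c)), namely the equivalence $\mu+\lambda\leq\nu\Leftrightarrow\lambda\leq\nu+^{\prime}(-\mu)$ of (\ref{resid-scal-usual}), so that the maximum is immediately $\nu+^{\prime}(-\mu)$. You instead prove the identity by exhaustive case analysis on whether $\mu$ and $\nu$ are finite, $-\infty$, or $+\infty$, carefully tracking the two conventions (\ref{conven}) and (\ref{conven-min}); in effect you re-derive Moreau's adjunction from scratch rather than citing it. Your cases are complete ($\mu=-\infty$ with arbitrary $\nu$; $\mu=+\infty$ with $\nu<+\infty$ and $\nu=+\infty$; $\mu\in\mathbb{R}$ with $\nu\in\{-\infty,+\infty\}$; and $\mu,\nu\in\mathbb{R}$), and each branch is computed correctly, including the delicate evaluations $(-\infty)+(+\infty)=-\infty$ under the lower addition and $(-\infty)+^{\prime}(+\infty)=+\infty$ under the upper one. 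What your approach buys is self-containedness — no appeal to an external convex-analysis reference — and an explicit verification in every branch that the supremum in (\ref{def-resid2}) is attained, a point the paper leaves implicit in its final step. What the paper's approach buys is brevity and conceptual placement: it exhibits the residuation formula as a direct instance of the Galois-type adjunction between Moreau's lower and upper additions, which is also the theme of Remark~\ref{rmor}. One small note: your closing appeal to Lemma~\ref{lthree} and (\ref{sim}) is inessential to your argument — the case check alone already establishes the identity — so it serves as a consistency check rather than a needed step.
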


\textbf{Proof}. By Definition~(\ref{def-resid2}), we have 
\begin{equation*}
{\mu }\backslash {\nu }:=\max \{\lambda \in \overline{\mathbb{R}}_{\max
}\mid \mu \otimes \lambda \leq \nu \}\enspace,
\end{equation*}
that is, in usual notations (with the convention~(\ref{conven}) for $+),$ 
\begin{equation}
{\mu }\backslash {\nu }=\max \{\lambda \in \overline{\mathbb{R}}\mid \mu
+\lambda \leq \nu \}.  \label{miuniu2}
\end{equation}
But, by \cite{mor}, p. 119, Proposition 3(c), for any $\mu ,\nu ,\lambda \in 
\overline{\mathbb{R}}$ we have the equivalence 
\begin{equation}
\mu +\lambda \leq \nu \Leftrightarrow \lambda \leq \nu +^{\prime }(-\mu )%
\enspace,  \label{resid-scal-usual}
\end{equation}
whence, by (\ref{miuniu2}) and (\ref{resid-scal-usual}), we obtain 
\begin{equation*}
\mu \backslash \nu =\max \{\lambda \in \overline{\mathbb{R}}\mid \lambda
\leq \nu +^{\prime }(-\mu )\}=\nu +^{\prime }(-\mu )\enspace.\quad \square
\end{equation*}

\begin{remark}
For a somewhat similar result see \cite[the remark made after Example 3.2]
{DMR}.
\end{remark}

\begin{corollary}
\label{ltwo}For $\mu ,\nu \in \overline{\mathbb{R}}_{\max }$, we have 
\begin{gather}
\mu \backslash \nu \in \mathbb{R}\Leftrightarrow \mu \text{ and }\nu \in 
\mathbb{R}\enspace,  \label{resid-finite} \\
\mu \backslash \nu =+\infty \Leftrightarrow \mu =-\infty \text{ or }\nu
=+\infty \text{ (or both).}  \label{resid-pinf}
\end{gather}
\end{corollary}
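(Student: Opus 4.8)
The plan is to reduce both equivalences to the elementary arithmetic of the upper addition $+^{\prime}$ by invoking Proposition~\ref{lone}, which gives $\mu\backslash\nu=\nu+^{\prime}(-\mu)$. Once this substitution is made, the corollary becomes a purely scalar statement about when $\nu+^{\prime}(-\mu)$ is finite and when it equals $+\infty$, so it suffices to tabulate the behaviour of $+^{\prime}$ at the infinite values.

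First I would record the relevant facts about $+^{\prime}$. Rather than working case-by-case directly from the defining convention~(\ref{conven-min}), I would derive them from the conjugation identity $a+^{\prime}b=-\big((-a)+(-b)\big)$, which is equivalent to~(\ref{sim}) of Lemma~\ref{lthree}, together with the convention~(\ref{conven}) for $+$, in which $-\infty$ is absorbing. Transporting through the opposite map, one obtains that $+\infty$ is absorbing for $+^{\prime}$, namely $a+^{\prime}(+\infty)=+\infty$ for every $a\in\overline{\mathbb{R}}$; that $a+^{\prime}b\in\mathbb{R}$ holds precisely when both $a$ and $b$ are finite; and that $a+^{\prime}b=+\infty$ holds precisely when $a=+\infty$ or $b=+\infty$. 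These three facts are the only ingredients needed.

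With these in hand, both claims follow by direct substitution. For~(\ref{resid-finite}), $\mu\backslash\nu=\nu+^{\prime}(-\mu)$ lies in $\mathbb{R}$ iff both $\nu$ and $-\mu$ are finite, which is exactly the condition $\mu,\nu\in\mathbb{R}$. For~(\ref{resid-pinf}), $\nu+^{\prime}(-\mu)=+\infty$ iff $\nu=+\infty$ or $-\mu=+\infty$; since $-\mu=+\infty$ is equivalent to $\mu=-\infty$, this is the asserted condition $\mu=-\infty$ or $\nu=+\infty$.

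The only delicate point is the consistent handling of the mixed infinite cases—such as $\mu=+\infty,\ \nu=-\infty$ (where $-\mu=-\infty$ and $\nu+^{\prime}(-\infty)=-\infty$) or $\mu=-\infty,\ \nu=+\infty$ (where both absorption rules apply and give $+\infty$)—so as to be sure that the finiteness dichotomy and the $+\infty$ dichotomy are stated for the correct combinations of signs. This is pure bookkeeping with~(\ref{conven}) and~(\ref{conven-min}); alternatively one could bypass Proposition~\ref{lone} entirely and read both equivalences straight off the defining supremum~(\ref{def-resid2}), but the route through $+^{\prime}$ keeps the case analysis shortest.
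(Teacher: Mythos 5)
Your proof is correct and follows essentially the same route as the paper's: both reduce the claim via Proposition~\ref{lone} to the arithmetic of $+^{\prime}$ at infinite arguments, and then use that $-\mu=+\infty$ if and only if $\mu=-\infty$. The only immaterial difference is that you derive the rules for $+^{\prime}$ from the conjugation identity~(\ref{sim}) together with the convention~(\ref{conven}) for $+$, whereas the paper reads them directly off the defining convention~(\ref{conven-min}).
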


\textbf{Proof.} This follows from Proposition \ref{lone} and the definition
of $+^{\prime },$ since $-\mu =+\infty $ if and only if $\mu =-\infty .$%
\quad $\square $

\begin{remark}
\label{rinser}For $\mu ,\nu \in \mathbb{R}_{\max }$, we have $\nu <+\infty $%
, so~(\ref{resid-pinf}) shows that 
\begin{equation*}
\mu \backslash \nu =+\infty \Leftrightarrow \mu =-\infty .
\end{equation*}
Hence, for $x,y\in \mathbb{R}_{\max }^{n}$, we have the following
equivalence 
\begin{equation}
x\backslash y=+\infty \Leftrightarrow x=-\infty \quad (\text{that is, }%
x_{i}=-\infty ,\;\forall i\in \lbrack n]).  \label{resid3}
\end{equation}
\end{remark}

Since $\mu \backslash \nu $ is an element of $\overline{\mathbb{R}}$, we get
by taking the complementaries of the equivalences (\ref{resid-finite}) and (%
\ref{resid-pinf}):

\begin{corollary}
\label{cone}We have 
\begin{eqnarray*}
\mu \backslash \nu =-\infty \Leftrightarrow (\mu \text{ or }\nu \notin 
\mathbb{R})\text{ and }\mu >-\infty ,\text{ }\nu <+\infty  \\
\Leftrightarrow (\mu =+\infty \text{ and }\nu <+\infty )\text{ or }(\mu
>-\infty \text{ and }\nu =-\infty )\enspace.
\end{eqnarray*}
\end{corollary}

By the above, we can summarize all possible values of $\mu \backslash \nu $
in the following table: 
\begin{equation*}
\begin{tabular}{|l|lll|}
\hline
\backslashbox{$\mu$}{$\nu$} & $-\infty $ & real & $+\infty $ \\ \hline
$-\infty $ & $+\infty $ & $+\infty $ & $+\infty $ \\ 
real & $-\infty $ & real & $+\infty $ \\ 
$+\infty $ & $-\infty $ & $-\infty $ & $+\infty $ \\ \hline
\end{tabular}
\end{equation*}

\begin{remark}
\label{rmurd}\emph{a) }Definition~(\ref{def-resid}) gives that for any
vectors $x=(x_{1},\ldots ,x_{n})^{T},$\newline
$y=(y_{1},\ldots ,y_{n})^{T}\in \overline{\mathbb{R}}_{\max }^{n}$, we have 
\begin{eqnarray}
{x}\backslash {y}&=&\max \{\lambda \in \overline{\mathbb{R}}_{\max }\mid
x\lambda \leq y\}  \label{resid-vec} \\
&=&\max \{\lambda \in \overline{\mathbb{R}}_{\max }\mid x_{i}\otimes \lambda
\leq y_{i}\;(i\in \lbrack n])\}  \notag \\
&=&\wedge _{i\in \lbrack n]}x_{i}\backslash y_{i}\enspace,  \notag
\end{eqnarray}
where $[n]=\{1,\ldots ,n\}$ and $\wedge $ denotes the infimum operation.
Hence, using also (\ref{resid-scalar-usual}), 
\begin{eqnarray}
{x}\backslash {y}&=&\max \{\lambda \in \overline{\mathbb{R}}\mid x_{i}+\lambda
\leq y_{i}\;(i\in \lbrack n])\}  \notag \\
&=&\min_{i\in \lbrack n]}(y_{i}+^{\prime }(-x_{i})).  \label{resid-usual}
\end{eqnarray}

\emph{b) }By (\ref{resid-usual}) and (\ref{conven-min}), for any $x\in 
\overline{\mathbb{R}}_{\max }^{n}$ we have 
\begin{equation}
x\backslash x=\wedge _{i\in \lbrack n]}x_{i}\backslash x_{i}=\min_{i\in
\lbrack n]}(x_{i}+^{\prime }(-x_{i}))=\left\{ 
\begin{array}{l}
+\infty \quad \text{if }x\in \{-\infty ,+\infty \}^{n} \\ 
0\quad \text{if }x\notin \{-\infty ,+\infty \}^{n}.
\end{array}
\right.   \label{resid-xx}
\end{equation}

\emph{c)} By (\ref{resid-vec}) we have the following equivalence: 
\begin{equation}
\lambda \leq x\backslash y\Leftrightarrow x\lambda \leq y,  \label{resid2}
\end{equation}
for all $\lambda \in \overline{\mathbb{R}}_{\max }$ and $x,y\in \overline{%
\mathbb{R}}_{\max }^{n}$ and for all $\lambda \in \mathbb{R}_{\max }$ and $%
x,y\in \mathbb{R}_{\max }^{n}$.
\end{remark}

\subsection{Hilbert projective distance}

For a complete semimodule $X$ over a complete idempotent semiring $\mathbb{S}
$ and for any $x,y\in X$, let us set 
\begin{equation}
\delta (x,y):=(x\backslash y)\otimes (y\backslash x)\enspace,  \label{delta}
\end{equation}
where $\otimes$ denotes the multiplication of $\mathbb{S}$. The last part of
the following result of \cite{CGQ} shows that when $\mathbb{S}\ $is
commutative, the mapping $\delta :X\times X\rightarrow \mathbb{S}$ \
satisfies an inequality opposite to the triangular inequality for a
distance, and thus $\delta (x,y)$ may be called an ``anti-distance''; by
abuse of language, we shall also keep this term in the non-commutative case,
even when $\delta $ is not symmetrical. Recall that, since $\mathbb{S}$ is
complete, the partial order relation defined by~\eqref{e-def-order}
determines an infimum operation, denoted by $\wedge$, see~\cite{CGQ}. In
what follows, we denote by $\mathbf{1}$ the unit element of $\mathbb{S}$.

\begin{proposition}
\cite[Theorem 17]{CGQ} \label{prop-anti-dist} Let $X$ be a complete
semimodule over a complete idempotent commutative semiring $\mathbb{S}$.
Then, for any $x,y,z\in X$, we have 
\begin{eqnarray}
\delta (x,y)&\leq& (x\backslash x)\wedge (y\backslash y),  \label{delta-1} \\
\delta (x,y)=\mathbf{1}&\Rightarrow& y=x\lambda ,\;\text{for some }\lambda \in 
\mathbb{S},  \label{delta-2} \\
\delta (x,z)&\geq& \delta (x,y)\otimes \delta (y,z)\enspace.
\label{antitriang}
\end{eqnarray}
\end{proposition}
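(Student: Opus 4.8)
The plan is to verify each of the three assertions directly from the definitions~\eqref{delta} and~\eqref{def-resid}, using only the basic residuation identities. I would start with the inequality~\eqref{delta-1}, since it is the most elementary and its proof reveals the mechanism that makes everything work. The key tool is the standard adjunction property of residuation, namely $AC \leq B \Leftrightarrow C \leq A\backslash B$, together with the monotonicity of residuation in its second argument. Applying the defining inequality $x(x\backslash y)\leq y$ I would deduce, via monotonicity of the map $C\mapsto y\backslash C$ and the factorization $x\backslash y \leq x\backslash x \otimes$ (something)—more precisely, I would use the identity $x\backslash y \leq (x\backslash x)$ whenever $y \leq x$, and combine it with the dual statement for $y\backslash x$. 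The cleanest route is: from $x(x\backslash y)\leq y$ one gets $(y\backslash x)\otimes x(x\backslash y) \leq (y\backslash x)\otimes y \leq y\backslash y$ (using $x(y\backslash x)\leq y$, hence $(y\backslash x)\otimes x \leq$ something below $\mathbf 1$ applied on the left); symmetry in $x$ and $y$ then yields both $\delta(x,y)\leq x\backslash x$ and $\delta(x,y)\leq y\backslash y$, whence~\eqref{delta-1}.

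For the implication~\eqref{delta-2}, suppose $\delta(x,y)=\mathbf 1$, i.e.\ $(x\backslash y)\otimes(y\backslash x)=\mathbf 1$. I would set $\lambda:=x\backslash y$ and show $y=x\lambda$. From $x(x\backslash y)\leq y$ we already have $x\lambda \leq y$. For the reverse inequality I would use that $\mathbf 1 \leq (x\backslash y)\otimes(y\backslash x)$ forces, after multiplying the chain $y(y\backslash x)\leq x$ on the left by $x\backslash y$ and invoking commutativity of $\mathbb S$, the inequality $y = y\otimes\mathbf 1 \leq y\otimes(x\backslash y)(y\backslash x) \leq x(x\backslash y)=x\lambda$; here commutativity is essential to move the scalar $(x\backslash y)$ past $y$ and reassemble the product $y(y\backslash x)\leq x$. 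Combining the two inequalities gives $y=x\lambda$.

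The anti-triangle inequality~\eqref{antitriang} is the heart of the statement and the step I expect to present the main obstacle, because it requires composing residuations across three points. The strategy is to show the two factor inequalities $(x\backslash z)\geq (x\backslash y)\otimes(y\backslash z)$ and $(z\backslash x)\geq (z\backslash y)\otimes(y\backslash x)$ separately and then multiply them, using commutativity of $\mathbb S$ to rearrange the four factors into $\delta(x,y)\otimes\delta(y,z)$. Each factor inequality is an instance of the general residuation composition law $A\backslash C \geq (A\backslash B)\otimes(B\backslash C)$, which follows from the adjunction: writing $u:=(x\backslash y)\otimes(y\backslash z)$, the defining property gives $y(y\backslash z)\leq z$ and $x(x\backslash y)\leq y$, so $xu = x(x\backslash y)(y\backslash z)\leq y(y\backslash z)\leq z$, and hence $u\leq x\backslash z$ by the adjunction~\eqref{resid2}. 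The delicate point is the final recombination: after obtaining the two factor inequalities, multiplying them produces $(x\backslash y)(y\backslash z)(z\backslash y)(y\backslash x)$, and commutativity is exactly what allows regrouping this as $\bigl((x\backslash y)(y\backslash x)\bigr)\otimes\bigl((y\backslash z)(z\backslash y)\bigr)=\delta(x,y)\otimes\delta(y,z)$. I would flag that without commutativity the middle factors $(y\backslash z)$ and $(z\backslash y)$ cannot be separated from the outer ones, which is precisely why the hypothesis on $\mathbb S$ appears in the statement.
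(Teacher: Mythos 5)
First, a caveat on the comparison itself: the paper never proves this proposition — it is imported verbatim from \cite[Theorem~17]{CGQ} — so your attempt can only be measured against the standard residuation-calculus argument, which is indeed the route you chose. Your treatment of \eqref{delta-2} and \eqref{antitriang} is correct. For \eqref{antitriang}, the composition law $x\backslash z\geq (x\backslash y)\otimes (y\backslash z)$ follows, exactly as you say, from $x\bigl((x\backslash y)\otimes (y\backslash z)\bigr)\leq y(y\backslash z)\leq z$ together with the adjunction \eqref{resid2}, and you correctly locate the (only) use of commutativity of $\mathbb{S}$ in regrouping the four factors into $\delta(x,y)\otimes\delta(y,z)$. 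For \eqref{delta-2}, the sandwich $x\lambda\leq y\leq x\lambda$ with $\lambda:=x\backslash y$, where the second inequality comes from $y=y\otimes\delta(x,y)=(y(y\backslash x))(x\backslash y)\leq x(x\backslash y)$, is exactly right.

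Your justification of \eqref{delta-1}, however, does not hold up as written. The parenthetical claim $x(y\backslash x)\leq y$ is false in general: in $\mathbb{R}_{\max}^{2}$ take $x=(1,0)^{T}$ and $y=(0,0)^{T}$; then $y\backslash x=0$, so $x(y\backslash x)=x\not\leq y$. (The true defining inequality is $y(y\backslash x)\leq x$.) Moreover, the chain $(y\backslash x)\otimes x(x\backslash y)\leq (y\backslash x)\otimes y\leq y\backslash y$ ends by asserting that an element of $X$ is below a scalar of $\mathbb{S}$, which is not meaningful. The repair is one line and uses only the tools you already invoke: scaling $x(x\backslash y)\leq y$ by the scalar $y\backslash x$ gives $x\bigl((x\backslash y)\otimes(y\backslash x)\bigr)\leq y(y\backslash x)\leq x$, hence $\delta(x,y)\leq x\backslash x$ by the adjunction; exchanging the roles of $x$ and $y$, and using commutativity to write $\delta(x,y)=(y\backslash x)\otimes(x\backslash y)$, gives $\delta(x,y)\leq y\backslash y$, and \eqref{delta-1} follows by taking the infimum of the two bounds. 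So the mechanism you name (adjunction plus monotonicity of the scalar action) does prove \eqref{delta-1}, but the argument you actually wrote for it would fail if expanded; it is a local, fixable error rather than a wrong approach.
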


Following \cite{Ben-01,CGQ,GK}, we define the Hilbert projective distance $d$
on $\overline{\mathbb{R}}_{\max }^{n}$ by 
\begin{equation}
d(x,y):=\delta (x,y)^{-},  \label{projdist2}
\end{equation}
with $\delta $ of~(\ref{delta}), that is, by the same expression~(\ref
{projdist}) as on $\mathbb{R}_{\max }^{n}$, where the superscript $-$ is
defined on $\overline{\mathbb{R}}$ \ by~(\ref{oppo}). For brevity, in the
sequel by ``distance'' we shall always mean the Hilbert projective distance.

\begin{corollary}
\label{d-distance}For $x,y,z\in \overline{\mathbb{R}}_{\max }^{n}\setminus
\{-\infty ,+\infty \}^{n},$ we have 
\begin{eqnarray}
d(x,y)&\geq& 0,\quad \quad   \label{d22} \\
d(x,y)=0&\Leftrightarrow& x=y\lambda ,\text{ for some }\lambda \in \mathbb{R}%
\enspace,  \label{d23} \\
d(x,z)&\leq& d(x,y)+d(y,z)\enspace.  \label{d24}
\end{eqnarray}

More generally, for all $x,y,z\in \overline{\mathbb{R}}_{\max }^{n}$, we
have 
\begin{equation}
d(x,z)\leq d(x,y)+^{\prime }d(y,z)\enspace,  \label{trei}
\end{equation}
and the implication 
\begin{equation}
d(x,y)=0\Rightarrow x=y\lambda ,\text{for some }\lambda \in \mathbb{R}%
\enspace.  \label{doi}
\end{equation}
\end{corollary}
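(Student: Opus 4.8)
The plan is to obtain every assertion as a consequence of Proposition~\ref{prop-anti-dist}, transported through the relation $d=\delta^-$ of~\eqref{projdist2}, using that the opposite map $\lambda\mapsto\lambda^-$ of~\eqref{oppo} is an involution on $\overline{\mathbb{R}}$ that reverses the usual order. Throughout I specialize Proposition~\ref{prop-anti-dist} to $X=\overline{\mathbb{R}}_{\max}^n$ over $\mathbb{S}=\overline{\mathbb{R}}_{\max}$, whose unit is $\mathbf{1}=0$, and I note that $\delta$ is symmetric since $\otimes=+$ is commutative.

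First I would establish the general triangle inequality~\eqref{trei}. Starting from the anti-triangular inequality~\eqref{antitriang}, namely $\delta(x,z)\geq\delta(x,y)\otimes\delta(y,z)=\delta(x,y)+\delta(y,z)$ (usual notation, with convention~\eqref{conven}), I apply the opposite map, which reverses the order, to get $-\delta(x,z)\leq-(\delta(x,y)+\delta(y,z))$. Rewriting~\eqref{sim} of Lemma~\ref{lthree} as $-(\mu+\nu)=(-\mu)+^{\prime}(-\nu)$ (replace $\mu,\nu$ by $-\mu,-\nu$), the right-hand side equals $d(x,y)+^{\prime}d(y,z)$, which yields $d(x,z)\leq d(x,y)+^{\prime}d(y,z)$, i.e.\ \eqref{trei}.

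Next I would prove the implication~\eqref{doi}. Since $(\cdot)^-$ is an involution, $d(x,y)=0$ is equivalent to $\delta(x,y)=0=\mathbf{1}$; by symmetry also $\delta(y,x)=\mathbf{1}$, so~\eqref{delta-2} applied with $x$ and $y$ interchanged gives $x=y\lambda$ for some $\lambda\in\overline{\mathbb{R}}_{\max}$. It then remains to check $\lambda\in\mathbb{R}$: I would rule out $\lambda=-\infty$ and $\lambda=+\infty$ by direct computation, observing that in either case the relation $x=y\lambda$ forces one of $x,y$ into $\{-\infty,+\infty\}^n$ in a manner that, via~\eqref{resid-usual}, \eqref{resid-xx} and the conventions~\eqref{conven}, \eqref{conven-min}, makes $\delta(x,y)\in\{-\infty,+\infty\}$, contradicting $\delta(x,y)=0$. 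Now for the three statements under the restriction $x,y,z\notin\{-\infty,+\infty\}^n$: for~\eqref{d22}, by~\eqref{resid-xx} one has $x\backslash x=y\backslash y=0$, so~\eqref{delta-1} gives $\delta(x,y)\leq 0$ and hence $d(x,y)=-\delta(x,y)\geq 0$; the implication $\Rightarrow$ in~\eqref{d23} is the special case of~\eqref{doi} just proved, while for $\Leftarrow$ one writes $x=y\lambda$ with $\lambda\in\mathbb{R}$ and computes from~\eqref{resid-usual} together with $y\backslash y=0$ that $(y\lambda)\backslash y=-\lambda$ and $y\backslash(y\lambda)=\lambda$, so $\delta(x,y)=(-\lambda)+\lambda=0$ and $d(x,y)=0$; finally for~\eqref{d24}, \eqref{d22} shows $d(x,y),d(y,z)\geq 0$, so neither equals $-\infty$, and since $+$ and $+^{\prime}$ differ only on the pair $\{-\infty,+\infty\}$ one has $d(x,y)+^{\prime}d(y,z)=d(x,y)+d(y,z)$, whence~\eqref{d24} follows from~\eqref{trei}.

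The abstract inequalities~\eqref{d22}, \eqref{trei} and the forward implications are immediate from Proposition~\ref{prop-anti-dist}; I expect the main obstacle to be the bookkeeping forced by the two conventions~\eqref{conven} and~\eqref{conven-min}. Concretely, the delicate points are excluding $\lambda=\pm\infty$ in~\eqref{doi} (where one must trace how $x=y\lambda$ interacts with infinite entries) and verifying $\Leftarrow$ in~\eqref{d23}, both of which hinge on keeping the finiteness restriction on the vectors and the distinction between $+$ and $+^{\prime}$ under tight control.
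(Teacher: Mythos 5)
Your proof is correct and follows essentially the same route as the paper's: both deduce \eqref{d22}, \eqref{trei} and \eqref{d24} from Proposition~\ref{prop-anti-dist} by transporting through the order-reversing opposite map (using \eqref{resid-xx} and \eqref{sim}), and both verify the converse implication in \eqref{d23} by the direct computation $y\backslash(y\lambda)=\lambda$, $(y\lambda)\backslash y=-\lambda$. If anything, you are more careful than the paper on one point: the paper's proof of \eqref{doi} invokes \eqref{delta-2} to produce a scalar $\lambda\in\overline{\mathbb{R}}_{\max}$ and stops there, although the statement asserts $\lambda\in\mathbb{R}$, whereas your exclusion of $\lambda=\pm\infty$ (which would force $x\in\{-\infty,+\infty\}^{n}$, hence $\delta(x,y)\in\{-\infty,+\infty\}\neq 0$) correctly supplies the finiteness argument that the paper leaves implicit.
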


(Recall that in the present setting, $y\lambda$ is now the vector with
entries $y_i+\lambda$, for $i\in [n]$.)

This result means that if one avoids vectors with only infinite entries,
then $d$ satisfies all properties of a projective distance, except that it
may take infinite values. The term ``projective'' comes from (\ref{d23}).
This result was given without proof in \cite[p.\ 6]{Ben-01}. For the sake of
completeness, we give here a proof, using Proposition~\ref{prop-anti-dist}.

\begin{proof}
(\ref{d22}): If $x,y\in \overline{\mathbb{R}}_{\max }^{n}\setminus \{-\infty
,+\infty \}^{n}$, then by (\ref{resid-xx}) we have $x\backslash
x=y\backslash y=0$, and hence by (\ref{delta-1}) we obtain $\delta (x,y)\leq
0\wedge 0=0$, so $d(x,y)=\delta (x,y)^{-}\geq 0$.

(\ref{doi}), (\ref{d23}): If $x,y\in \overline{\mathbb{R}}_{\max }^{n}$ and $%
d(x,y)=0,$ then $\delta (x,y)=d(x,y)^{-}=0^{-}=0=\mathbf{1},$ and hence by (%
\ref{delta-2}) there exists $\lambda \in \overline{\mathbb{R}}_{\max }$ such
that $y=x\lambda $. Thus, we have the implication $\Rightarrow $ in (\ref
{doi}) and (\ref{d23}). Conversely, if $y=x\lambda ,$ where $x,y\in 
\overline{\mathbb{R}}_{\max }^{n}\setminus \{-\infty ,+\infty \}^{n}$ and $%
\lambda \in \mathbb{R}$ , then $x\backslash y=x\backslash (x\lambda
)=(x+\lambda )-x=\lambda $ and $y\backslash x=$ $(x\lambda )\backslash
x=x-(x+\lambda )=-\lambda =\lambda ^{-\text{ }}$, whence 
\begin{equation*}
d(x,y)=d(x,x\lambda )=((x\backslash (x\lambda ))\otimes ((x\lambda
)\backslash x))^{-}=\lambda ^{-}\otimes \lambda =0.
\end{equation*}

(\ref{trei}), (\ref{d24}): Taking the opposite of (\ref{antitriang}) and
using (\ref{sim}), we get 
\begin{eqnarray*}
d(x,z)&=&\delta (x,z)^{-}\leq (\delta (x,y)\otimes \delta (y,z))^{-} \\
&=&\delta (y,z)^{-}\otimes ^{\prime }\delta (x,y)^{-}=d(y,z)+^{\prime }d(x,y),
\end{eqnarray*}
that is, (\ref{trei}). Finally, if $x,y,z\in \overline{\mathbb{R}}_{\max
}^{n}\setminus \{-\infty ,+\infty \}^{n},$ then by (\ref{d22}) all the
quantities in (\ref{trei}) are nonnegative, hence (\ref{trei}) reduces to (%
\ref{d24}).
\end{proof}

\begin{remark}
\label{remark-distance} \emph{a)} (\ref{d22}) does not hold for $x=y\in
\{-\infty ,+\infty \}^{n}$. Indeed, then $x\backslash x=+\infty $ (by (\ref
{resid-xx})), whence $\delta (x,x)=+\infty $, so 
\begin{equation}
d(x,x)=\delta (x,x)^{-}=-\infty ,\quad \quad \forall x\in \{-\infty ,+\infty
\}^{n}\enspace.  \label{minusinfin}
\end{equation}
Consequently, if $V$ is a subsemimodule of $\overline{\mathbb{R}}_{\max }^{n}
$ (so $-\infty \in V),$ then by (\ref{minusinfin}) we have
\begin{equation}
d(-\infty ,V)=\inf_{v\in V}d(-\infty ,v)=-\infty ,  \label{ciudat}
\end{equation}
which shows that the best approximation of $x=-\infty $ by $V$ is trivial.

\emph{b)} The implication $\Leftarrow $ of~(\ref{d23}) does not hold if $%
x\in \{-\infty ,+\infty \}^{n}$, since then the right hand side of (\ref{d23}%
) implies that $x=y$, but then by \emph{a)} above, $d(x,y)=-\infty $, so
that the left hand side of~(\ref{d23}) does not hold.

\emph{c)} In general one cannot replace $+^{\prime }$ by $+$ in (\ref{trei}%
). Indeed, for example if $x=y\in \{-\infty ,+\infty \}^{n}$ and $z\in 
\mathbb{R}^{n},$ then $d(x,z)=+\infty $ and $d(x,y)=-\infty $ (by (\ref
{minusinfin})), $d(y,z)=+\infty ,$ so $d(x,z)\nleqslant $ $d(x,y)+d(y,z).$

\emph{d)} In the sequel in the proofs of some statements about $d$ we shall
rather work with $\delta $ instead of $d$ in order to use only $+$ instead
of both $+^{\prime }$ and $+,$ and then only in the final step of the proof
we shall pass to the conclusion for $d(x,y)=\delta (x,y)^{-}$.
\end{remark}

In~\cite{CGQ} the results are presented in the case of complete
subsemimodules of a complete semimodule. Here we shall consider mainly the
projection onto, and the best approximation by elements, of a \emph{b}%
-complete subsemimodule of $\mathbb{R}_{\max }^{n}$, but as far as possible,
we shall write the results also for complete subsemimodules of $\overline{%
\mathbb{R}}_{\max }^{n}$. Recall that a subsemimodule $V$ of $\overline{%
\mathbb{R}}_{\max }^{n}$ is called \emph{complete} if the supremum (or
infinite sum) of any subset of $V$ belongs to $V$ and the scalar
multiplication $\otimes =+$ distributes over all infinite sums. If $V$ is a
subset (and in particular a subsemimodule) of $\overline{\mathbb{R}}_{\max
}^{n}$, and $x\in \overline{\mathbb{R}}_{\max }^{n}$, one simply defines the 
\emph{distance} $d(x,V)$ of $x$ to $V$ and the \emph{best approximation}\/
of $x$ by an element $v_{0}$ of $V$ as in the case of $\mathbb{R}_{\max
}^{n} $, that is by~(\ref{D3}) and~(\ref{D4}). For a complete subsemimodule $%
V$ of $\overline{\mathbb{R}}_{\max }^{n}$, the \emph{canonical projection
operator}\/ $P_{V}$ of $\overline{\mathbb{R}}_{\max }^{n}$ onto $V$ is also
defined \cite{CGQ} by~(\ref{phx1}) for all $x\in \overline{\mathbb{R}}_{\max
}^{n}$.

\subsection{An equivalent reformulation of the inequality $Ax\geq Bx$}

For a later application of our main distance formula to the solution of the
system of inequalities 
\begin{equation}
Ax\geq Bx,  \label{vac}
\end{equation}
where $A,B:\mathbb{R}_{\max }^{n}\rightarrow \mathbb{R}_{\max }^{p}$ are $%
p\times n$ matrices with entries in $\mathbb{R}_{\max }$, we give here an
equivalent reformulation of (\ref{vac}). We denote by $A_{i}$ and $B_{i}$
the $i$th rows of $A$ and $B$, respectively. We recall that for $B$ as
above, which in addition satisfies the assumption 
\begin{equation}
\text{ for all }j\in \lbrack n]\text{ there exists }i\in \lbrack p]\text{
such that }B_{ij}\not=-\infty ,  \label{resi0}
\end{equation}
one defines (see e.g.\ \cite{BCOQ}, \cite{AGG} and the references therein)
the \emph{residuated operator} $B^{\#}$ from $\mathbb{R}_{max}^{p}$ to $%
\mathbb{R}_{max}^{n}$ by 
\begin{equation}
(B^{\#}y)_{j}=\inf_{i}(-B_{ij}+^{\prime }y_{i}).  \label{resi1}
\end{equation}

We shall assume that the matrix $B$ satisfies assumption (\ref{resi0}).

The term ``residuated'' refers to the well-known equivalence of the
inequalities 
\begin{equation}
Bx\leq y\Leftrightarrow x\leq B^{\#}y.  \label{resi2}
\end{equation}
We recall the easy proof of this equivalence: We have 
\begin{eqnarray*}
Bx &\leq &y\Leftrightarrow (Bx)_{i}\leq y_{i},\;\forall 1\leq i\leq
p\Leftrightarrow \sup_{j}(B_{ij}+x_{j})\leq y_{i},\;\forall 1\leq i\leq p \\
&\Leftrightarrow &B_{ij}+x_{j}\leq y_{i},\;\forall 1\leq i\leq p,\;\forall
1\leq j\leq n \\
&\Leftrightarrow &x_{j}\leq -B_{ij}+^{\prime }y_{i},\text{ }\forall 1\leq
i\leq p,\;\forall 1\leq j\leq n \\
&\Leftrightarrow &x_{j}\leq \inf_{i}(-B_{ij}+^{\prime
}y_{i})=(B^{\#}y)_{j},\;\forall 1\leq j\leq n\Leftrightarrow x\leq B^{\#}y.
\end{eqnarray*}

Note that (\ref{resi1}) is a particular case of residuation operators for
matrices in the sense (\ref{def-resid}), since regarding $y\in \mathbb{R}%
_{\max }^{p}$ as a $p\times 1$ matrix we have 
\begin{equation*}
B^{\#}y=B\backslash y;
\end{equation*}
indeed, using (\ref{resi2}), we obtain 
\begin{equation*}
B^{\#}y=\max \{x\in \mathbb{R}_{\max }^{p}|x\leq B^{\#}y\}=\max \{x\in 
\mathbb{R}_{\max }^{p}|Bx\leq y|=B\backslash y.
\end{equation*}

Applying (\ref{resi2}) to $y=Ax,$ we get 
\begin{equation}
Bx\leq Ax\Leftrightarrow x\leq B^{\#}Ax.  \label{resi3}
\end{equation}

Finally, since the right hand side of (\ref{resi3}) can be written in the
form of the equality $x=B^{\#}Ax\wedge x,$ we obtain the equivalence 
\begin{equation}
Bx\leq Ax\Leftrightarrow x=B^{\#}Ax\wedge x,  \label{resi4}
\end{equation}
which we shall use later on.

\section{On the distance to a subsemimodule of $\mathbb{R}_{\max }^{n}$}
\label{s03}

Next we shall give some properties of the distance $d$ of (\ref{projdist})
and we shall show that one may reduce the study of the best approximation of
elements $x\in \mathbb{R}_{\max }^{n}$ by the elements of a \emph{b}%
-complete subsemimodule $V$ of $\mathbb{R}_{\max }^{n}$ to the case where $%
x\in \mathbb{R}^{n}$ and $V\subset \mathbb{R}^{n}\cup \{-\infty \}$.

\begin{definition}
\label{dsupport}For an element $x=(x_{1},\dots ,x_{n})^{T}$ of $\overline{%
\mathbb{R}}_{\max }^{n}$, we define the \emph{support}\/ $\func{Supp}\,x$, 
\emph{lower support}\/ $\func{Lsupp}\,x$ and \emph{upper support}\/ $\func{%
Usupp}\,x$ of $x$ by: 
\begin{eqnarray*}
\func{Supp}\,x&:=&\{i\in \lbrack n]\mid x_{i}\in \mathbb{R}\}, \\
\func{Lsupp}\,x&:=&\{i\in \lbrack n]\mid x_{i}<+\infty \}, \\
\func{Usupp}\,x&:=&\{i\in \lbrack n]\mid x_{i}>-\infty \}.
\end{eqnarray*}
\end{definition}

We have trivially 
\begin{equation}
\func{Supp}\,x=\func{Lsupp}\,x\cap \func{Usupp}\,x.  \label{triv}
\end{equation}
Moreover, when $x\in \mathbb{R}_{\max }^{n},$ we have 
\begin{equation*}
\func{Lsupp}\,x=[n].
\end{equation*}

\begin{lemma}
\label{l10}For any $x,y\in \overline{\mathbb{R}}_{\max }^{n}$ the following
statements are equivalent:

\begin{enumerate}
\item  \label{l10-1} $x\backslash y>-\infty $.

\item  \label{l10-2} There exists $\lambda \in \mathbb{R}$ such that $%
x\lambda \leq y$, that is, $x_{i}+\lambda \leq y_{i}$ for all $i\in \lbrack
n]$.

\item  \label{l10-3} We have 
\begin{equation}
\func{Usupp}\,x\subset \func{Usupp}\,y,\quad \func{Lsupp}\,x\supset \func{%
Lsupp}\,y.  \label{inclus}
\end{equation}
\end{enumerate}
\end{lemma}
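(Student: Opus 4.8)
The plan is to prove the cycle of implications $\ref{l10-2}\Rightarrow\ref{l10-1}\Rightarrow\ref{l10-3}\Rightarrow\ref{l10-2}$, working coordinatewise and relying on the explicit formula for the residuation of vectors. The implication $\ref{l10-2}\Rightarrow\ref{l10-1}$ is immediate: by the definition~(\ref{resid-vec}) of $x\backslash y$ as the (attained) supremum of all $\lambda\in\overline{\mathbb{R}}_{\max}$ with $x\lambda\leq y$, any finite $\lambda$ satisfying $x\lambda\leq y$ yields $x\backslash y\geq\lambda>-\infty$.

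For $\ref{l10-1}\Rightarrow\ref{l10-3}$, I use $x\backslash y=\wedge_{i\in[n]}x_{i}\backslash y_{i}=\min_{i\in[n]}(y_{i}+^{\prime}(-x_{i}))$ from~(\ref{resid-vec}) and~(\ref{resid-usual}). As this is an infimum over finitely many terms, $x\backslash y>-\infty$ holds if and only if $x_{i}\backslash y_{i}>-\infty$ for every $i\in[n]$. Now Corollary~\ref{cone} states that $x_{i}\backslash y_{i}=-\infty$ exactly when $(x_{i}=+\infty$ and $y_{i}<+\infty)$ or $(x_{i}>-\infty$ and $y_{i}=-\infty)$. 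Negating coordinatewise, $x_{i}\backslash y_{i}>-\infty$ means both $(x_{i}>-\infty\Rightarrow y_{i}>-\infty)$ and $(y_{i}<+\infty\Rightarrow x_{i}<+\infty)$. Read through Definition~\ref{dsupport}, the first implication says $i\in\func{Usupp}\,x\Rightarrow i\in\func{Usupp}\,y$ and the second says $i\in\func{Lsupp}\,y\Rightarrow i\in\func{Lsupp}\,x$; quantifying over $i$ gives precisely the inclusions~(\ref{inclus}).

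Finally, for $\ref{l10-3}\Rightarrow\ref{l10-2}$ I must exhibit a \emph{finite} $\lambda$, and this is where the only genuine subtlety lies: since $x\backslash y$ may equal $+\infty$, the maximizer in the residuation need not be finite, so a finite witness has to be produced by hand. I split $[n]$ according to the type of $x_{i}$. If $x_{i}=-\infty$, then $x_{i}+\lambda=-\infty\leq y_{i}$ for every finite $\lambda$ (as $-\infty$ is the zero of $\otimes$), so such coordinates impose no constraint. If $x_{i}=+\infty$, then $i\in\func{Usupp}\,x$, so the inclusion $\func{Lsupp}\,x\supset\func{Lsupp}\,y$ forces $y_{i}=+\infty$, and $x_{i}+\lambda=+\infty\leq y_{i}$ holds for every finite $\lambda$ by convention~(\ref{conven}). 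If $x_{i}\in\mathbb{R}$, then $\func{Usupp}\,x\subset\func{Usupp}\,y$ gives $y_{i}>-\infty$, so $x_{i}+\lambda\leq y_{i}$ is automatic when $y_{i}=+\infty$ and otherwise reduces to the finite upper bound $\lambda\leq y_{i}-x_{i}$.

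Thus, under~\ref{l10-3}, every coordinatewise constraint on $\lambda$ is either vacuous or a finite upper bound, and there are only finitely many of them. Hence any finite $\lambda$ lying below all these bounds satisfies $x\lambda\leq y$; explicitly, $\lambda:=\min\big(\{\,y_{i}-x_{i}\mid x_{i},y_{i}\in\mathbb{R}\,\}\cup\{0\}\big)$ is finite and works, establishing~\ref{l10-2} and closing the cycle. The main obstacle is precisely the $+\infty$ case, which prevents one from reading a finite $\lambda$ off the residuation directly; the constructive partition above is exactly what resolves it.
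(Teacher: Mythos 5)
Your proof is correct, but it traverses the equivalences in the opposite direction from the paper: you prove \ref{l10-2}$^{\circ}\Rightarrow$\ref{l10-1}$^{\circ}\Rightarrow$\ref{l10-3}$^{\circ}\Rightarrow$\ref{l10-2}$^{\circ}$, whereas the paper proves \ref{l10-1}$^{\circ}\Rightarrow$\ref{l10-2}$^{\circ}\Rightarrow$\ref{l10-3}$^{\circ}\Rightarrow$\ref{l10-1}$^{\circ}$, so in fact no single implication is common to the two arguments. The substantive difference lies in where the finiteness of the witness $\lambda$ is paid for. The paper gets it almost for free: from \ref{l10-1}$^{\circ}$, any real $\lambda\leq x\backslash y$ satisfies $x\lambda\leq y$ by the residuation equivalence~(\ref{resid2}), so the ``maximizer may be $+\infty$'' issue you flag never arises on their route; the coordinatewise case analysis is instead spent on \ref{l10-3}$^{\circ}\Rightarrow$\ref{l10-1}$^{\circ}$, showing each $x_{i}\backslash y_{i}>-\infty$ via~(\ref{resid-pinf}). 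You spend the case analysis on \ref{l10-3}$^{\circ}\Rightarrow$\ref{l10-2}$^{\circ}$ instead, producing the explicit witness $\lambda=\min\big(\{y_{i}-x_{i}\mid x_{i},y_{i}\in\mathbb{R}\}\cup\{0\}\big)$, and you prove \ref{l10-1}$^{\circ}\Rightarrow$\ref{l10-3}$^{\circ}$ by negating Corollary~\ref{cone} coordinatewise. Your version buys a concrete, quantitative witness (useful if one ever wants the best constant, which is $x\backslash y$ itself); the paper's version buys economy, since~(\ref{resid2}) absorbs exactly the subtlety you resolve by hand. One small slip in wording: in your $x_{i}=+\infty$ case, the operative fact is $i\notin\func{Lsupp}\,x$ (not $i\in\func{Usupp}\,x$, which is true but irrelevant); combined with $\func{Lsupp}\,y\subset\func{Lsupp}\,x$ this gives $i\notin\func{Lsupp}\,y$, i.e.\ $y_{i}=+\infty$, which is the conclusion you correctly use, so the argument stands.
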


\begin{proof}
\ref{l10-1}$^{\circ }\Rightarrow $ \ref{l10-2}$^{\circ }$. Let $x,y\in 
\overline{\mathbb{R}}_{\max }^{n}$ be such that $x\backslash y>-\infty $.
Then there exists $\lambda \in \mathbb{R}$ such that $\lambda \leq
x\backslash y$ and hence by (\ref{resid2}), $x\lambda \leq y$.

\ref{l10-2}$^{\circ }\Rightarrow $ \ref{l10-3}$^{\circ }$. Let $x,y\in 
\overline{\mathbb{R}}_{\max }^{n}$ and $\lambda \in \mathbb{R}$ be such that 
$x\lambda \leq y$, that is, $x_{i}+\lambda \leq y_{i}$ for all $i\in \lbrack
n]$. It follows that if $x_{i}>-\infty $ then $y_{i}>-\infty $, which shows
the inclusion $\func{Usupp}\,x\subset \func{Usupp}\,y$. Similarly, if $%
y_{i}<+\infty $ then $x_{i}<+\infty $, which shows the inclusion $\func{Lsupp%
}y\subset \func{Lsupp}\,x$.

\ref{l10-3}$^{\circ }\Rightarrow $ \ref{l10-1}$^{\circ }$. Assume that $%
x,y\in \overline{\mathbb{R}}_{\max }^{n}$ satisfy (\ref{inclus}). Since $%
x\backslash y=\min_{i\in \lbrack n]}x_{i}\backslash y_{i}$, we get that $%
x\backslash y>-\infty $ if and only $x_{i}\backslash y_{i}>-\infty $ for all 
$i\in \lbrack n]$. Now, if $y_{i}=-\infty $ then $i\in \lbrack n]\setminus 
\func{Usupp}\,y$ so by (\ref{inclus}), $i\in \lbrack n]\setminus \func{Usupp}%
\,x$, that is $x_{i}=-\infty ,$ whence by (\ref{resid-pinf}), $%
x_{i}\backslash y_{i}=+\infty >-\infty $. Similarly, if $x_{i}=+\infty $
then $i\in \lbrack n]\setminus \func{Lsupp}\,x$ so by (\ref{inclus}), $i\in
\lbrack n]\setminus \func{Lsupp}\,y$, that is $y_{i}=+\infty ,$ whence by (%
\ref{resid-pinf}), $x_{i}\backslash y_{i}=+\infty >-\infty $. Otherwise, $%
y_{i}>-\infty $ and $x_{i}<+\infty $, so there exist $\lambda $ and $\mu \in 
\mathbb{R}$ such that $y_{i}\geq \lambda $ and $x_{i}\leq \mu $, whence $%
x_{i}\backslash y_{i}\geq \mu \backslash \lambda \in \mathbb{R}$.
\end{proof}

As for convex sets without lines in linear spaces (see e.g.\ \cite
{Bauer,Thompson}), we define the \emph{part}\/ of an element of $\overline{%
\mathbb{R}}_{\max }^{n}$ as follows:

\begin{definition}
\label{dpart}The \emph{part}\/ $[[x]]$ of $x=(x_{1},\dots ,x_{n})^{T}\in 
\overline{\mathbb{R}}_{\max }^{n}$ is the equivalence class of $x$ for the
equivalence relation (of comparability) 
\begin{align*}
x\sim y\text{ if there exist }\lambda ,\mu \in \mathbb{R}\text{ such that }%
x\lambda \leq y\leq x\mu , \\
\text{that is, }x_{i}+\lambda \leq y_{i}\leq x_{i}+\mu \;\forall i\in
\lbrack n].\text{ }
\end{align*}
\end{definition}

Applying Lemma \ref{l10} symmetrically on $x$ and $y$, we deduce

\begin{lemma}
\label{l1}The following statements are equivalent for $x,y\in \overline{%
\mathbb{R}}_{\max }^{n}$:

\begin{enumerate}
\item  \label{l1-1} $d(x,y)<+\infty $.

\item  \label{l1-2} $x,y$ are in the same part.

\item  \label{l1-3} We have 
\begin{equation}
\func{Usupp}\,x=\func{Usupp}\,y,\quad \func{Lsupp}\,x=\func{Lsupp}\,y.
\label{egalit}
\end{equation}

\item  \label{l1-4} We have 
\begin{equation*}
\func{Supp}\,x=\func{Supp}\,y,\;\sigma _{-\infty }(x)=\sigma _{-\infty }(y)%
\text{ and }\sigma _{+\infty }(x)=\sigma _{+\infty }(y)\enspace,
\end{equation*}
where for any $x\in \overline{\mathbb{R}}_{\max }^{n}$ and $\lambda =\pm
\infty $ we denote 
\begin{equation*}
\sigma _{\lambda }(x):=\{i\in \lbrack n]\mid x_{i}=\lambda \}.
\end{equation*}
\end{enumerate}
\end{lemma}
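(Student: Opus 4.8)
The plan is to derive all four equivalences from a single ``master'' characterization, namely that $d(x,y)<+\infty$ holds if and only if both residuated scalars are finite from below, i.e.\ $x\backslash y>-\infty$ and $y\backslash x>-\infty$. Once this is in hand, statement~\ref{l1-3}$^{\circ}$ follows by invoking Lemma~\ref{l10} for the two ordered pairs $(x,y)$ and $(y,x)$, statement~\ref{l1-2}$^{\circ}$ is just the unfolding of Definition~\ref{dpart} through the same lemma, and statement~\ref{l1-4}$^{\circ}$ is a purely set-theoretic rewriting of~\ref{l1-3}$^{\circ}$.

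First I would reduce~\ref{l1-1}$^{\circ}$ to the master condition. Since $d(x,y)=\delta(x,y)^{-}$ and $\lambda^{-}<+\infty\Leftrightarrow\lambda>-\infty$, we have $d(x,y)<+\infty\Leftrightarrow\delta(x,y)>-\infty$. Now $\delta(x,y)=(x\backslash y)\otimes(y\backslash x)$ is, in usual notation, the sum $(x\backslash y)+(y\backslash x)$ computed with the convention~\eqref{conven}. Under this convention a sum of two elements of $\overline{\mathbb{R}}$ equals $-\infty$ exactly when at least one summand is $-\infty$ (the only non-obvious case being the absorbing rule $(+\infty)+(-\infty)=-\infty$), so $\delta(x,y)>-\infty$ if and only if $x\backslash y>-\infty$ and $y\backslash x>-\infty$. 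This is the delicate step, and I regard it as the main (if modest) obstacle: one must keep track of the asymmetry in~\eqref{conven} and not inadvertently apply the min-plus rule~\eqref{conven-min}.

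Next I would read off~\ref{l1-3}$^{\circ}$ and~\ref{l1-2}$^{\circ}$ from Lemma~\ref{l10}. Applying the equivalence \ref{l10-1}$^{\circ}\Leftrightarrow$\ref{l10-3}$^{\circ}$ of that lemma to $(x,y)$ gives $\func{Usupp}\,x\subset\func{Usupp}\,y$ and $\func{Lsupp}\,x\supset\func{Lsupp}\,y$, while applying it to $(y,x)$ gives the reverse inclusions; the conjunction is precisely~\eqref{egalit}. For~\ref{l1-2}$^{\circ}$, Definition~\ref{dpart} says $x\sim y$ iff there are $\lambda,\mu\in\mathbb{R}$ with $x\lambda\le y$ and $y\le x\mu$; the first existential is condition \ref{l10-2}$^{\circ}$ of Lemma~\ref{l10} for $(x,y)$, and the second, rewritten as $y(-\mu)\le x$, is condition \ref{l10-2}$^{\circ}$ for $(y,x)$. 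Hence $x\sim y$ is equivalent to the same pair $x\backslash y>-\infty$, $y\backslash x>-\infty$, closing the loop with~\ref{l1-1}$^{\circ}$.

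Finally,~\ref{l1-3}$^{\circ}\Leftrightarrow$\ref{l1-4}$^{\circ}$ is bookkeeping with complements: by definition $\sigma_{-\infty}(x)=[n]\setminus\func{Usupp}\,x$ and $\sigma_{+\infty}(x)=[n]\setminus\func{Lsupp}\,x$, so equality of the upper and lower supports of $x$ and $y$ is equivalent to $\sigma_{-\infty}(x)=\sigma_{-\infty}(y)$ together with $\sigma_{+\infty}(x)=\sigma_{+\infty}(y)$; the remaining equality $\func{Supp}\,x=\func{Supp}\,y$ then follows automatically from~\eqref{triv}, since $\func{Supp}=\func{Lsupp}\cap\func{Usupp}$. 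This completes the cycle of equivalences.
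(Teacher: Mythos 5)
Your proof is correct and rests on exactly the same ingredients as the paper's: Lemma~\ref{l10} applied to the two ordered pairs $(x,y)$ and $(y,x)$, the observation that a product in $\overline{\mathbb{R}}_{\max}$ equals $-\infty$ precisely when one factor does (so that $\delta(x,y)>-\infty$ iff both $x\backslash y>-\infty$ and $y\backslash x>-\infty$), real scalars being invertible, and set-theoretic bookkeeping for \ref{l1-4}$^{\circ}$. The only difference is organizational — you make \ref{l1-1}$^{\circ}$, \ref{l1-2}$^{\circ}$, \ref{l1-3}$^{\circ}$ each equivalent to the hub condition $x\backslash y>-\infty$ and $y\backslash x>-\infty$, while the paper runs the cycle \ref{l1-1}$^{\circ}\Rightarrow$\ref{l1-2}$^{\circ}\Rightarrow$\ref{l1-3}$^{\circ}\Rightarrow$\ref{l1-1}$^{\circ}$ — which does not amount to a genuinely different route.
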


\begin{proof}
\ref{l1-1}$^{\circ }\Rightarrow $ \ref{l1-2}$^{\circ }$. If $d(x,y)<+\infty $%
, then $(x\backslash y)\otimes (y\backslash x)=\delta
(x,y)=d(x,y)^{-}>-\infty $, which implies that both $x\backslash y$ and $%
y\backslash x$ are $>-\infty $, since $-\infty \otimes \mu =\mu \otimes
-\infty =-\infty $ for all $\mu \in \overline{\mathbb{R}}_{\max }$. Hence by
the implication \ref{l10-1}$^{\circ }\Rightarrow $\ref{l10-2}$^{\circ }$ of
Lemma~\ref{l10}, we get that there exist $\lambda ,\mu \in \mathbb{R}$ such
that $x\lambda \leq y$ and $y\mu \leq x$. Since $\mu \in \mathbb{R}$ is
invertible, it follows that $x\lambda \leq y\leq x\mu ^{-1}$, hence $x$ and $%
y$ are in the same part.

\ref{l1-2}$^{\circ }\Rightarrow $ \ref{l1-3}$^{\circ }$. Assume \ref{l1-2}$%
^{\circ },$ so there exist $\lambda ,\mu \in \mathbb{R}$ such that $x\lambda
\leq y\leq x\mu $. By the implication \ref{l10-2}$^{\circ }\Rightarrow $\ref
{l10-3}$^{\circ }$ of Lemma~\ref{l10}, we get the inclusions (\ref{inclus}).
But, since $\mu ,\lambda \in \mathbb{R}$ are invertible, we have $y\mu
^{-1}\leq x\leq y\lambda ^{-1}$, so that \ref{l1-2}$^{\circ }$ also holds
for the pair $(y,x)$. Consequently, we get the opposite inclusions to (\ref
{inclus}), whence the equalities (\ref{egalit}).

\ref{l1-3}$^{\circ }\Rightarrow $ \ref{l1-1}$^{\circ }$. Assume now the
equalities (\ref{egalit}). By the implication \ref{l10-3}$^{\circ
}\Rightarrow $ \ref{l10-1}$^{\circ }$ of Lemma~\ref{l10} applied to the two
pairs $(x,y)$ and $(y,x)$, we get that $x\backslash y>-\infty $ and $%
y\backslash x>-\infty $. This implies that $\delta (x,y)=(x\backslash
y)\otimes (y\backslash x)>-\infty $, whence $d(x,y)=\delta (x,y)^{-}<+\infty 
$.

\ref{l1-3}$^{\circ }\Rightarrow $ \ref{l1-4}$^{\circ }$. This follows from
the fact that for all $x\in \overline{\mathbb{R}}_{\max }^{n}$ we have (\ref
{triv}) and 
\begin{equation*}
\sigma _{-\infty }(x)=\func{Lsupp}\,x\setminus \func{Usupp}\,x,\quad \sigma
_{+\infty }(x)=\func{Usupp}\,x\setminus \func{Lsupp}\,x.
\end{equation*}

\ref{l1-4}$^{\circ }\Rightarrow $ \ref{l1-3}$^{\circ }$. Similarly this
follows from the fact that for all $x\in \overline{\mathbb{R}}_{\max }^{n}$
we have 
\begin{equation*}
\func{Lsupp}\,x=\func{Supp}\,x\cup \sigma _{-\infty }(x),\quad \func{Usupp}%
\,x=\func{Supp}\,x\cup \sigma _{+\infty }(x).\qed
\end{equation*}
\renewcommand{\qed}{}
\end{proof}

\begin{remark}
\label{rutil}\emph{a)} The equivalence \ref{l1-2}$^{\circ }\Leftrightarrow $ 
\ref{l1-1}$^{\circ }$ of Lemma \ref{l1} can be expressed in the form of the
following useful formula for the part of $x$: 
\begin{equation}
\lbrack \lbrack x]]=\{y\in \overline{\mathbb{R}}_{\max }^{n}|d(x,y)<+\infty
\}\quad \quad \forall x\in \overline{\mathbb{R}}_{\max }^{n}\enspace.
\label{useful}
\end{equation}
Hence, in particular, for any subset $V$ of $\overline{\mathbb{R}}_{\max
}^{n}$ we have 
\begin{equation}
V\cap \lbrack \lbrack x]]=\{v\in V|d(x,v)<+\infty \}\quad \quad \forall x\in 
\overline{\mathbb{R}}_{\max }^{n}\enspace.  \label{useful3}
\end{equation}

b) Similarly, the equivalences \ref{l1-2}$^{\circ }\Leftrightarrow $ \ref
{l1-3}$^{\circ }$ and \ref{l1-2}$^{\circ }\Leftrightarrow $ \ref{l1-4}$%
^{\circ }$of Lemma \ref{l1} can be expressed as formulas for the part of $x,$
namely: 
\begin{eqnarray*}
\lbrack \lbrack x]]&=&\{y\in \overline{\mathbb{R}}_{\max }^{n}|\func{Usupp}\,y=%
\func{Usupp}\,x,\;\func{Lsupp}\,y=\func{Lsupp}\,x\},\\
\lbrack \lbrack x]]&=&\{y\in \overline{\mathbb{R}}_{\max }^{n}|\func{Supp}\,y=%
\func{Supp}\,x,\;\sigma _{\lambda }(y)=\sigma _{\lambda }(x)\;(\lambda =\pm
\infty )\;\}.
\end{eqnarray*}
\end{remark}

\begin{corollary}
For each $x\in \overline{\mathbb{R}}_{\max }^{n}\setminus \{-\infty ,+\infty
\}^{n}$ we have $[[x]]\subset \overline{\mathbb{R}}_{\max }^{n}\setminus
\{-\infty ,+\infty \}^{n}$ and $d$ is a projective distance on $[[x]]$.
\end{corollary}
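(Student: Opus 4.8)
The plan is to prove the two assertions separately. First I would establish the inclusion $[[x]]\subset \overline{\mathbb{R}}_{\max}^{n}\setminus\{-\infty,+\infty\}^{n}$. Since $x\notin\{-\infty,+\infty\}^{n}$, at least one coordinate of $x$ is real, i.e.\ $\func{Supp}\,x\neq\emptyset$. For any $y\in[[x]]$ the equivalence \ref{l1-2}$^{\circ}\Leftrightarrow$\ref{l1-4}$^{\circ}$ of Lemma~\ref{l1} (equivalently, the support description of the part recorded in Remark~\ref{rutil}) gives $\func{Supp}\,y=\func{Supp}\,x\neq\emptyset$, so $y$ too has a real coordinate and hence $y\notin\{-\infty,+\infty\}^{n}$. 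This yields the inclusion.

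Next I would check that $d$, restricted to $[[x]]$, satisfies the axioms of a projective distance. By the inclusion just proved, every pair of points of $[[x]]$ lies in $\overline{\mathbb{R}}_{\max}^{n}\setminus\{-\infty,+\infty\}^{n}$, so Corollary~\ref{d-distance} applies and provides nonnegativity~(\ref{d22}), the projective separation property $d(y,z)=0\Leftrightarrow y=z\lambda$ for some $\lambda\in\mathbb{R}$ (from~(\ref{d23})), and the triangle inequality~(\ref{d24}). Symmetry $d(y,z)=d(z,y)$ follows at once from the definition~(\ref{projdist2})--(\ref{delta}) together with the commutativity of $\otimes$ on $\overline{\mathbb{R}}_{\max}$.

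The key point — and the only property of a genuine, finite-valued projective distance not already guaranteed on the whole of $\overline{\mathbb{R}}_{\max}^{n}\setminus\{-\infty,+\infty\}^{n}$ — is finiteness, and this is precisely what restricting to a single part buys us. By the equivalence \ref{l1-1}$^{\circ}\Leftrightarrow$\ref{l1-2}$^{\circ}$ of Lemma~\ref{l1}, one has $d(y,z)<+\infty$ exactly when $y$ and $z$ belong to the same part. Since the comparability relation of Definition~\ref{dpart} is an equivalence relation, any $y,z\in[[x]]$ satisfy $y\sim z$, whence $d(y,z)<+\infty$. Combining this with the previous paragraph shows that $d|_{[[x]]}$ is a finite, nonnegative, symmetric function satisfying the triangle inequality and vanishing exactly on proportional pairs, i.e.\ a projective distance on $[[x]]$.

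I do not expect any genuine obstacle here: the statement is essentially a bookkeeping consequence of Lemma~\ref{l1} and Corollary~\ref{d-distance}. The only subtlety to keep in mind is that the single property which fails on the full ambient set, namely finiteness of $d$, is exactly the one restored by confining attention to one part, so that the characterization of parts through their supports in Lemma~\ref{l1} is the right tool to invoke.
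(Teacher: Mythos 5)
Your proof is correct and follows essentially the same route as the paper: the inclusion via the support characterization of parts in Lemma~\ref{l1} (you argue directly where the paper argues by contradiction), and the projective-distance property via Corollary~\ref{d-distance} together with the finiteness characterization of parts (formula~(\ref{useful})/Lemma~\ref{l1}). Your write-up merely makes explicit what the paper compresses into the single citation of~(\ref{useful}), including the use of transitivity of the comparability relation to get finiteness of $d$ on all pairs in $[[x]]$.
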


\begin{proof}
Let $x\in \overline{\mathbb{R}}_{\max }^{n}\setminus \{-\infty ,+\infty
\}^{n}$ and $y\in \lbrack \lbrack x]].$ If $y\in \{-\infty ,+\infty \}^{n}$,
then $\func{Supp}\,y=\{i\in \lbrack n]\mid y_{i}\in \mathbb{R}\}=\emptyset ,$
whence by $y\in \lbrack \lbrack x]]$ and the implication \ref{l1-2}$^{\circ
}\Rightarrow $ \ref{l1-4}$^{\circ }$ of Lemma \ref{l1}, we obtain $\func{Supp%
}x=\emptyset ,$ so $x\in \{-\infty ,+\infty \}^{n},$ which contradicts our
assumption. Therefore we must have $y\notin \{-\infty ,+\infty \}^{n},$
which proves the first assertion of the corollary. Finally, the second
assertion of the corollary holds by (\ref{useful}).
\end{proof}

\begin{corollary}
\label{part-infinite} For any $x\in \{-\infty ,+\infty \}^{n}$ the part of $x
$ is reduced to the singleton $\{x\}$, that is: 
\begin{equation}
\lbrack \lbrack x]]=\{x\}\quad \quad \forall x\in \{-\infty ,+\infty \}^{n}%
\enspace,  \label{plum}
\end{equation}
and hence, in particular, $\{-\infty \}$ is a part of $\overline{\mathbb{R}}%
_{\max }^{n}$ or $\mathbb{R}_{\max }^{n}$. Also, on $[[x]]$, $d$ is
identically equal to $-\infty $.
\end{corollary}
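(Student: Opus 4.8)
The plan is to read off $[[x]]$ directly from the support characterization of parts established in Lemma~\ref{l1}, and then to recover the assertion about $d$ from the computation~(\ref{minusinfin}). First I would fix $x\in\{-\infty,+\infty\}^n$ and record the three sets attached to it: since $x$ has no finite entry, $\func{Supp}\,x=\emptyset$, while the positions carrying $-\infty$ and those carrying $+\infty$ form a disjoint partition of $[n]$, that is $\sigma_{-\infty}(x)\sqcup\sigma_{+\infty}(x)=[n]$. Now let $y\in[[x]]$. By the equivalence \ref{l1-2}$^{\circ}\Leftrightarrow$\ref{l1-4}$^{\circ}$ of Lemma~\ref{l1} (equivalently, by the formula for the part in Remark~\ref{rutil}\,b)), membership $y\in[[x]]$ forces $\func{Supp}\,y=\func{Supp}\,x=\emptyset$ together with $\sigma_{-\infty}(y)=\sigma_{-\infty}(x)$ and $\sigma_{+\infty}(y)=\sigma_{+\infty}(x)$.

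The main (and essentially only) step is then to observe that these data leave no freedom. From $\func{Supp}\,y=\emptyset$ we get $y\in\{-\infty,+\infty\}^n$, so each coordinate $y_i$ lies in exactly one of $\sigma_{-\infty}(y)$ or $\sigma_{+\infty}(y)$; but these coincide with $\sigma_{-\infty}(x)$ and $\sigma_{+\infty}(x)$, which (by the previous paragraph) already determine each coordinate of $x$. Hence $y_i=x_i$ for every $i\in[n]$, i.e.\ $y=x$, which proves~(\ref{plum}). Specializing to $x=-\infty$, whose every entry equals $-\infty$, so that $x\in\{-\infty,+\infty\}^n$ and $x$ belongs both to $\overline{\mathbb{R}}_{\max}^n$ and to $\mathbb{R}_{\max}^n$, then yields the particular case that $\{-\infty\}$ is a part.

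Finally, since $[[x]]=\{x\}$ is a singleton, the only value of $d$ to inspect on $[[x]]$ is $d(x,x)$, which equals $-\infty$ by~(\ref{minusinfin}); thus $d$ is identically $-\infty$ on $[[x]]$. I do not expect any genuine obstacle here: the argument is a direct bookkeeping application of Lemma~\ref{l1}, the only point requiring a moment's care being that $\sigma_{-\infty}(x)$ and $\sigma_{+\infty}(x)$ truly cover all of $[n]$ when $x$ has only infinite entries, as it is exactly this exhaustion that rigidifies the part down to a single point.
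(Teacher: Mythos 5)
Your proof is correct and follows essentially the same route as the paper's: both invoke the equivalence \ref{l1-2}$^{\circ}\Leftrightarrow$\ref{l1-4}$^{\circ}$ of Lemma~\ref{l1} to see that $\func{Supp}\,y=\emptyset$ together with $\sigma_{-\infty}(y)=\sigma_{-\infty}(x)$ and $\sigma_{+\infty}(y)=\sigma_{+\infty}(x)$ pins down every coordinate of $y$, and both conclude the statement about $d$ from~(\ref{minusinfin}). The only cosmetic difference is that the paper phrases the rigidity via $\sigma_{-\infty}(x)$ alone (its complement being $\sigma_{+\infty}(x)$), while you phrase it via the partition $\sigma_{-\infty}(x)\sqcup\sigma_{+\infty}(x)=[n]$, which is the same observation.
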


\begin{proof}
For any $x\in \{-\infty ,+\infty \}^{n}$ we have $\func{Supp}\,x=\emptyset $
and all the entries of $x$ are determined by $\sigma _{-\infty }(x)$: 
\begin{equation*}
x_{i}=\left\{ 
\begin{array}{l}
-\infty \quad \text{for }i\in \sigma _{-\infty }(x) \\ 
+\infty \quad \text{for }i\notin \sigma _{-\infty }(x).
\end{array}
\right. 
\end{equation*}
Then, the equivalence \ref{l1-2}$^{\circ }\Leftrightarrow $ \ref{l1-4}$%
^{\circ }$ of Lemma~\ref{l1} implies that $y\in \lbrack \lbrack x]]$ if and
only if $y=x$, which shows that $[[x]]=\{x\}$. Hence in particular, $%
[[-\infty ]]=\{-\infty \}$, so $\{-\infty \}$ is a part of $\overline{%
\mathbb{R}}_{\max }^{n}$ or $\mathbb{R}_{\max }^{n}$. Also, by (\ref
{minusinfin}), $d$ is identically equal to $-\infty $ on $[[x]]$, for any $%
x\in \{-\infty ,+\infty \}^{n}.$
\end{proof}

The main application to best approximation is the following:

\begin{theorem}
\label{reduc} If $V$ is a subset of $\mathbb{R}_{\max }^{n}$ (or $\overline{%
\mathbb{R}}_{\max }^{n}$) and $x\in \mathbb{R}_{\max }^{n}$ (or $\overline{%
\mathbb{R}}_{\max }^{n}$), then $d(x,V)<+\infty $ if and only if $V$
intersects the part $[[x]]$ of $x$ (i.e., $V\cap \lbrack \lbrack x]]\neq
\emptyset $), and in that case 
\begin{equation}
d(x,V)<d(x,v)=+\infty ,\quad \forall v\in V\setminus \lbrack \lbrack x]],
\label{f1}
\end{equation}
so any best approximation of $x$ in $V$ is necessarily in $[[x]]$, and 
\begin{equation}
d(x,V)=d(x,V\cap \lbrack \lbrack x]]).  \label{rebagat}
\end{equation}
\end{theorem}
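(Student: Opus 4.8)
The plan is to reduce everything to the characterization of the part recorded in Remark~\ref{rutil}, namely formula~(\ref{useful}) — which says that $[[x]]$ is exactly the set of points lying at finite distance from $x$ — together with its relativized version~(\ref{useful3}). Once these are in hand, all three assertions become elementary manipulations of infima in $\overline{\mathbb{R}}$, the only care being the tracking of the value $+\infty$.

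First I would establish the stated equivalence together with~(\ref{f1}). For any $v\in V\setminus[[x]]$, formula~(\ref{useful}) (equivalently, the implication \ref{l1-1}$^{\circ}\Rightarrow$\ref{l1-2}$^{\circ}$ of Lemma~\ref{l1} read contrapositively) gives $d(x,v)=+\infty$; this is the right-hand equality in~(\ref{f1}). For the equivalence, I would observe that $d(x,V)=\inf_{v\in V}d(x,v)<+\infty$ holds if and only if at least one $v\in V$ satisfies $d(x,v)<+\infty$: if every term were $+\infty$ the infimum would be $+\infty$, and conversely a single finite term bounds the infimum strictly below $+\infty$. By~(\ref{useful3}) the existence of such a $v$ is precisely the condition $V\cap[[x]]\neq\emptyset$. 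Thus, in the case $V\cap[[x]]\neq\emptyset$ we have $d(x,V)<+\infty$, which combined with $d(x,v)=+\infty$ for every $v\in V\setminus[[x]]$ yields the strict inequality in~(\ref{f1}). That every best approximation lies in $[[x]]$ is then immediate: a best approximation $v_{0}$ satisfies $d(x,v_{0})=d(x,V)<+\infty$, so $v_{0}\in[[x]]$ by~(\ref{useful}).

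Finally, to obtain~(\ref{rebagat}) I would split $V=(V\cap[[x]])\cup(V\setminus[[x]])$ and use that the infimum over a union is the minimum of the infima:
\begin{equation*}
d(x,V)=\min\Bigl(d(x,V\cap[[x]]),\ \inf_{v\in V\setminus[[x]]}d(x,v)\Bigr).
\end{equation*}
Since every $v\in V\setminus[[x]]$ has $d(x,v)=+\infty$, the second infimum equals $+\infty$ (also if that set is empty, by the usual convention). On the other hand $V\cap[[x]]\neq\emptyset$ forces $d(x,V\cap[[x]])<+\infty$, since any element of $V\cap[[x]]$ is at finite distance from $x$ by~(\ref{useful3}). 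Hence the minimum is attained by the first term, giving $d(x,V)=d(x,V\cap[[x]])$.

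The argument is routine once~(\ref{useful}) and~(\ref{useful3}) are available; the only genuine point to watch is the handling of $+\infty$ in the infima — in particular, justifying that $d(x,V)<+\infty$ forces an actual summand of finite value rather than merely an infimum approached from above. Beyond this bookkeeping I do not expect any serious obstacle.
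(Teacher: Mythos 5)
Your proposal is correct and follows essentially the same route as the paper's own proof: both rest on formula~(\ref{useful}) (the part as the set of points at finite distance from $x$), establish the equivalence by noting that the infimum $\inf_{v\in V}d(x,v)$ is below $+\infty$ exactly when some individual term is, and derive~(\ref{rebagat}) from the disjoint splitting $V=(V\cap[[x]])\cup(V\setminus[[x]])$ with the second infimum equal to $+\infty$. Your handling of the $+\infty$ bookkeeping matches the paper's argument, so there is nothing to add.
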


\begin{proof}
Assume that $d(x,V)<+\infty $. Then $\inf_{v\in V}d(x,v)<+\infty $, so there
exists $v\in V$ such that $d(x,v)<+\infty $. By (\ref{useful}), we must have 
$v\in \lbrack \lbrack x]]$, so $V\cap \lbrack \lbrack x]]\not=\emptyset $.

Conversely, assume that $V\cap \lbrack \lbrack x]]\not=\emptyset $, say $%
v\in V\cap \lbrack \lbrack x]]$. Then by $v\in \lbrack \lbrack x]]$ and (\ref
{useful}), we have $d(x,v)<+\infty $, so by $v\in V$ we obtain $d(x,V)\leq
d(x,v)<+\infty $. This proves the equivalence $d(x,V)<+\infty
\Leftrightarrow V\cap \lbrack \lbrack x]]\neq \emptyset $. Moreover, by (\ref
{useful}) we have $d(x,v)=+\infty $ when $v\notin \lbrack \lbrack x]]$,
which shows formula (\ref{f1}), whence also $d(x,V\setminus \lbrack \lbrack
x]])=\inf_{v\in V\setminus \lbrack \lbrack x]]}d(x,v)=+\infty $, and any
best approximation of $x$ in $V$ is necessarily in $[[x]]$. Since $V$ is the
disjoint union $V=(V\cap \lbrack \lbrack x]])\cup (V\setminus \lbrack
\lbrack x]]),$ we obtain 
\begin{equation*}
d(x,V)=\min \{d(x,V\cap \lbrack \lbrack x]]),d(x,V\backslash \lbrack \lbrack
x]])\}=d(x,V\cap \lbrack \lbrack x]]).
\end{equation*}
\end{proof}

The first part of Theorem \ref{reduc} can be also expressed in the following
useful form:

\begin{corollary}
\label{cutil2}For any subsemimodule $V$ of $\overline{\mathbb{R}}_{\max }^{n}
$ we have 
\begin{equation*}
\{x\in \overline{\mathbb{R}}_{\max }^{n}|d(x,V)<+\infty \}=\{x\in \overline{%
\mathbb{R}}_{\max }^{n}|V\cap \lbrack \lbrack x]]\neq \emptyset \}.
\end{equation*}
\end{corollary}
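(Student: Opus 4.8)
The plan is to recognize that the asserted set identity is merely a pointwise restatement of the equivalence established in the first part of Theorem~\ref{reduc}. First I would fix an arbitrary $x\in\overline{\mathbb{R}}_{\max}^{n}$. Since the subsemimodule $V$ is in particular a subset of $\overline{\mathbb{R}}_{\max}^{n}$, the hypotheses of Theorem~\ref{reduc} (in its $\overline{\mathbb{R}}_{\max}^{n}$ version) are met, so the theorem yields
\[
d(x,V)<+\infty \;\Leftrightarrow\; V\cap [[x]]\neq\emptyset\enspace.
\]

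This equivalence says exactly that $x$ lies in the left-hand set of the claimed identity if and only if it lies in the right-hand set. Since $x$ was arbitrary, the two sets contain the same elements and therefore coincide, which is precisely the assertion of the corollary.

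I expect no genuine obstacle in this argument: the entire content resides in Theorem~\ref{reduc}, and the corollary simply repackages its first part as an equality of sets. The only point worth checking is that the theorem's hypotheses hold, namely that $V$ counts as a subset of $\overline{\mathbb{R}}_{\max}^{n}$; this is immediate, as every subsemimodule is by definition a subset, so the equivalence may be invoked for each $x$.
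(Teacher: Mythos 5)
Your proof is correct and follows exactly the paper's route: the paper offers no separate argument for this corollary, presenting it simply as a pointwise restatement of the first part of Theorem~\ref{reduc} (which is stated for arbitrary subsets of $\overline{\mathbb{R}}_{\max }^{n}$, hence applies to any subsemimodule). Your explicit verification that a subsemimodule qualifies as a subset, and the unpacking of the equivalence into a set equality, is precisely what the paper leaves implicit.
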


In the sequel we shall give some results in $\mathbb{R}_{\max }^{n}.$

\begin{corollary}[in $\mathbb{R}_{\max }^{n}$]
\label{rmaimu2} \emph{a)} For $x\in \mathbb{R}_{\max }^{n}$, we have $%
[[x]]\subset \mathbb{R}_{\max }^{n}$ and 
\begin{equation}
\lbrack \lbrack x]]=\{y\in \mathbb{R}_{\max }^{n}\mid {\func{Supp}\,}y={%
\func{Supp}\,}x\}\enspace.  \label{part2}
\end{equation}
\emph{b)} For $x\in \mathbb{R}_{\max }^{n}$, we have 
\begin{equation}
d(x,-\infty )=
\begin{cases}
+\infty  & \text{if }x>-\infty  \\ 
-\infty  & \text{if }x=-\infty .
\end{cases}
\label{kix}
\end{equation}
\end{corollary}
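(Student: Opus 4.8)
The plan is to read off both statements from Lemma~\ref{l1} together with the support description of the part in Remark~\ref{rutil}b), exploiting the fact (recorded after Definition~\ref{dsupport}) that every $x\in\mathbb{R}_{\max}^n$ satisfies $\func{Lsupp}\,x=[n]$, equivalently $\sigma_{+\infty}(x)=\emptyset$.

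For part a) I would first prove the inclusion $[[x]]\subset\mathbb{R}_{\max}^n$. Given $y\in[[x]]$, the equivalence \ref{l1-2}$^{\circ}\Leftrightarrow$\ref{l1-3}$^{\circ}$ of Lemma~\ref{l1} yields $\func{Lsupp}\,y=\func{Lsupp}\,x=[n]$, so $y_i<+\infty$ for every $i$ and hence $y\in\mathbb{R}_{\max}^n$. To obtain the stated equality~\eqref{part2}, I would use that on vectors of $\mathbb{R}_{\max}^n$ one has $\func{Supp}=\func{Usupp}$: indeed $\func{Lsupp}=[n]$ forces $\func{Supp}\,z=\func{Usupp}\,z$ through~\eqref{triv}, for both $z=x$ and $z=y\in\mathbb{R}_{\max}^n$. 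Thus, for such $y$, the condition $\func{Usupp}\,y=\func{Usupp}\,x$ delivered by Lemma~\ref{l1} is literally $\func{Supp}\,y=\func{Supp}\,x$; conversely, if $y\in\mathbb{R}_{\max}^n$ has $\func{Supp}\,y=\func{Supp}\,x$, then $\func{Usupp}\,y=\func{Usupp}\,x$ and $\func{Lsupp}\,y=[n]=\func{Lsupp}\,x$, so the implication \ref{l1-3}$^{\circ}\Rightarrow$\ref{l1-2}$^{\circ}$ places $y$ in $[[x]]$. This gives~\eqref{part2}.

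For part b) I would distinguish the two cases appearing in~\eqref{kix}. If $x=-\infty$, then $x\in\{-\infty,+\infty\}^n$ and~\eqref{minusinfin} gives $d(-\infty,-\infty)=-\infty$ at once. If $x>-\infty$, then $\func{Supp}\,x\neq\emptyset$, whereas the all-$-\infty$ vector has empty support; by the description~\eqref{part2} just proved, $-\infty\notin[[x]]$, and then the formula~\eqref{useful} for the part yields $d(x,-\infty)=+\infty$.

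I expect no genuine obstacle here: the whole content is already packaged in Lemma~\ref{l1}. The only points requiring attention are the bookkeeping of supports for finite-entry vectors (where $\func{Supp}$ and $\func{Usupp}$ coincide) and the isolated treatment of the all-$-\infty$ vector, which is exactly the degenerate case~\eqref{minusinfin} in which $d$ ceases to behave as a distance.
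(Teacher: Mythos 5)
Your proof is correct and follows essentially the same route as the paper's: part a) via the equivalence \ref{l1-2}$^{\circ}\Leftrightarrow$\ref{l1-3}$^{\circ}$ of Lemma~\ref{l1} combined with $\func{Lsupp}\,x=[n]$ (so that $\func{Supp}=\func{Usupp}$ on $\mathbb{R}_{\max}^n$), and part b) via the support comparison together with~(\ref{useful}) and~(\ref{minusinfin}). No gaps.
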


\begin{proof}
\emph{a)} By the implication \ref{l1-2}$^{\circ }\Rightarrow $ \ref{l1-3}$%
^{\circ }$ of Lemma \ref{l1} and the obvious equivalence 
\begin{equation}
x\in \mathbb{R}_{\max }^{n}\Leftrightarrow (x\in \overline{\mathbb{R}}_{\max
}^{n},\;\func{Lsupp}\,x=[n]),  \label{obvi}
\end{equation}

we get for any $x\in \mathbb{R}_{\max }^{n}$ that $y\in \lbrack \lbrack x]]$
implies $\func{Lsupp}\,y=\func{Lsupp}\,x=[n],$ whence $y\in \mathbb{R}_{\max
}^{n};$ thus $[[x]]\subset \mathbb{R}_{\max }^{n}$ for any element $x\in $ $%
\mathbb{R}_{\max }^{n}$. Moreover, if $x\in \mathbb{R}_{\max }^{n}$, then $%
\func{Usupp}\,x={\func{Supp}\,}x$ and hence, by (\ref{obvi}) and the
equivalence \ref{l1-2}$^{\circ }\Leftrightarrow $\ref{l1-3}$^{\circ }$ of
Lemma \ref{l1} we have $y\in \lbrack \lbrack x]]$ if and only if ${\func{Supp%
}\,}y={\func{Supp}}x.$

\emph{b)} If $x>-\infty $ then $\func{Supp}\,x\neq \emptyset ,$ and since $%
\func{Supp}\,(-\infty )=\emptyset $, $x$ and $-\infty $ are in different
parts. Hence, by (\ref{useful}), $d(x,-\infty )=+\infty $. On the other
hand, by (\ref{minusinfin}) we have $d(-\infty ,-\infty )=-\infty $.
\end{proof}

\begin{remark}
\label{distance-rn} \emph{a)} In particular, $\mathbb{R}^{n}$ \emph{is a
part of }$\mathbb{R}_{\max }^{n}$; indeed, the points in $\mathbb{R}^{n}$
are exactly those that have support equal to the set $[n],$ and hence, by (%
\ref{part2}), all points in $\mathbb{R}^{n}$ are in the same part as one of
them, say $0.$

\emph{b) }When $x=(x_{1},\dots ,x_{n})^{T},y=(y_{1},\dots ,y_{n})^{T}\in 
\mathbb{R}_{\max }^{n}\setminus \{-\infty \}$ have the same support $%
I\subset \lbrack n]$, we have 
\begin{equation}
d(x,y)=\max_{i\in I}(x_{i}-y_{i})-\min_{j\in I}(x_{j}-y_{j}).  \label{D1p}
\end{equation}

Indeed, if $k\notin I:=\func{Supp}\,x=\func{Supp}\,y$, then $%
x_{k}=y_{k}=-\infty ,$ so $x_{k}\backslash y_{k}=+\infty $ (see~(\ref{resid3}%
)), and hence 
\begin{eqnarray*}
x\backslash y &=&\min_{i\in \lbrack n]}(x_{i}\backslash y_{i})=\min
\{\min_{i\in I}(x_{i}\backslash y_{i}),\min_{k\in \lbrack n]\backslash
I}(x_{k}\backslash y_{k})\} \\
&=&\min_{i\in I}(x_{i}\backslash y_{i})=\min_{i\in I}(y_{i}-x_{i}),
\end{eqnarray*}
where the terms in the latter expression are all finite. Similarly, $%
y\backslash x=\min_{j\in I}(x_{j}-y_{j}).$ Consequently, we obtain 
\begin{eqnarray*}
d(x,y) &=&((x\backslash y)\otimes (y\backslash x))^{-} \\
&=&-(\min_{i\in I}(y_{i}-x_{i})+\min_{j\in I}(x_{j}-y_{j})) \\
&=&\max_{i\in I}(x_{i}-y_{i})-\min_{j\in I}(x_{j}-y_{j}),
\end{eqnarray*}
that is, (\ref{D1p}).

\emph{c) }Combining Remark \ref{rutil} and Corollary \ref{rmaimu2}a), it
follows that in $\mathbb{R}_{\max }^{n}$we have the equivalence 
\begin{equation}
d(x,y)<+\infty \Leftrightarrow {\func{Supp}\,}y={\func{Supp}\,}x.
\label{cemasa}
\end{equation}
Note that this also follows from Lemma \ref{l1}, equivalence \ref{l1-1}$%
^{\circ }\Leftrightarrow $\ref{l1-4}$^{\circ }$.
\end{remark}

\begin{remark}
When $x,y\in \mathbb{R}_{\max }^{n}\setminus \{-\infty \}$, the Hilbert
projective distance $d(x,y)$ can be characterized by 
\begin{equation}
d(x,y)=\inf \{\frac{\mu }{\lambda }\mid \lambda \in \mathbb{R},\mu \in 
\mathbb{R}\qquad y\lambda \leq x\leq y\mu \}\enspace.
\label{e-d-hilbert}
\end{equation}
To show this, we shall assume that $x$ and $y$ have the same support
(otherwise, the set in~\eqref{e-d-hilbert} is empty, so its infimum, $%
+\infty $, trivially coincides with $d(x,y)=+\infty $). Then, the maximal $%
\lambda \in \mathbb{R}$ such that $y\lambda \leq x$ is $\min_{j\in
I}(x_{j}-y_{j})$. Similarly, the term $\max_{i\in I}(x_{i}-y_{i})$ coincides
with the minimal $\mu \in \mathbb{R}$ such that $x\leq y\mu $. Therefore, %
\eqref{e-d-hilbert} coincides with the expression of $d(x,y)$ in~\eqref{D1p}.
\end{remark}

Using formula (\ref{phx21}), we get as a corollary of Theorem~\ref{reduc}:

\begin{corollary}
\label{pvinclassx}\emph{a)} If $V$ is a \emph{b}-complete subsemimodule of $%
\mathbb{R}_{\max }^{n}$, and $d(x,V)<+\infty $, then $P_{V}(x)\in V\cap
\lbrack \lbrack x]]$.

\emph{b) }Consequently, if $V$ is a \emph{b}-complete subsemimodule of $%
\mathbb{R}_{\max }^{n}$, and $V\cap \lbrack \lbrack x]]\neq \emptyset ,$
then $P_{V}(x)\in V\cap \lbrack \lbrack x]]$.
\end{corollary}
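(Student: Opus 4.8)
The plan is to deduce both parts directly from the two facts already in hand: formula~(\ref{phx21}), which asserts that $P_V(x)$ realizes the distance $d(x,V)$ (so $P_V(x)$ is a best approximation of $x$ in $V$), and Theorem~\ref{reduc}, which pins down where best approximations can live. Since $V$ is \emph{b}-complete, $P_V(x)$ is well defined by~(\ref{phx1}) and lies in $V$ by construction; the only work is to show that it also lies in the part $[[x]]$.

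For part~\emph{a)}, I would argue as follows. Suppose $d(x,V)<+\infty$. By~(\ref{phx21}) we have $d(x,P_V(x))=d(x,V)<+\infty$, so $P_V(x)$ is a best approximation of $x$ in $V$. Theorem~\ref{reduc} guarantees, under the very hypothesis $d(x,V)<+\infty$, that any best approximation of $x$ in $V$ necessarily belongs to $[[x]]$; this is exactly the content of~(\ref{f1}), since for $v\notin[[x]]$ one has $d(x,v)=+\infty$, strictly larger than $d(x,V)$, so such a $v$ cannot be a minimizer. Combining $P_V(x)\in V$ with $P_V(x)\in[[x]]$ then gives $P_V(x)\in V\cap[[x]]$.

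For part~\emph{b)}, I would simply invoke the equivalence established in the first part of Theorem~\ref{reduc}: the hypothesis $V\cap[[x]]\neq\emptyset$ is equivalent to $d(x,V)<+\infty$. Hence the hypothesis of part~\emph{a)} is automatically met, and applying part~\emph{a)} yields $P_V(x)\in V\cap[[x]]$ at once.

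There is no genuine obstacle here; the statement is essentially a repackaging of Theorem~\ref{reduc} together with~(\ref{phx21}). The only points requiring a moment's care are that \emph{b}-completeness is precisely what makes $P_V(x)$ exist and belong to $V$ in the first place (through the attained supremum in~(\ref{phx1})), and that one must apply the correct facet of Theorem~\ref{reduc} in each part: the structural constraint~(\ref{f1}) forcing minimizers into $[[x]]$ for~\emph{a)}, and the existence equivalence $d(x,V)<+\infty\Leftrightarrow V\cap[[x]]\neq\emptyset$ for~\emph{b)}.
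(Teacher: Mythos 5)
Your proof is correct and follows essentially the same route as the paper's: part \emph{a)} combines $P_V(x)\in V$ (from the definition~(\ref{phx1})) with $d(x,P_V(x))=d(x,V)<+\infty$ (from~(\ref{phx21})) and the constraint~(\ref{f1}) of Theorem~\ref{reduc}, and part \emph{b)} reduces to part \emph{a)} via the equivalence $d(x,V)<+\infty\Leftrightarrow V\cap[[x]]\neq\emptyset$. The paper cites formula~(\ref{useful3}) for this last reduction while you cite the equivalence stated in Theorem~\ref{reduc}, but these are the same fact, so there is no substantive difference.
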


\begin{proof}
a) We have $P_{V}(x)\in V$ by the definition (\ref{phx1}) of $P_{V}(x)$.
Furthermore, by (\ref{phx21}) and our assumption we have $%
d(x,P_{V}(x))=d(x,V)<+\infty ,$ and hence by (\ref{f1}) we obtain $%
P_{V}(x)\in \lbrack \lbrack x]]$.

b) This follows from (\ref{useful3}) and part a).
\end{proof}

\begin{proposition}
\label{pabra}If $V$ is a (\emph{b}-complete) subsemimodule of $\mathbb{R}%
_{\max }^{n}$, then the set 
\begin{equation}
V^{(x)}:=(V\cap \lbrack \lbrack x]])\cup \{-\infty \}  \label{f22}
\end{equation}
is the smallest (\emph{b}-complete) subsemimodule of $\mathbb{R}_{\max }^{n}$
containing $V\cap \lbrack \lbrack x]]$ and we have 
\begin{equation}
d(x,V)=d(x,V^{(x)}).  \label{f3}
\end{equation}
\end{proposition}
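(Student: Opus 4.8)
The plan is to treat the two assertions in turn: the characterization of $V^{(x)}$ as the smallest (b-complete) subsemimodule containing $V\cap[[x]]$, and then the distance identity~(\ref{f3}). Throughout write $I:=\func{Supp}\,x$; by Corollary~\ref{rmaimu2}\emph{a)} (formula~(\ref{part2})) every element of $V\cap[[x]]$ has support exactly $I$, whereas the vector $-\infty$ has empty support. I would first check closure of $V^{(x)}=(V\cap[[x]])\cup\{-\infty\}$ under $\oplus$ and under scalar multiplication by a direct support computation: the pointwise maximum of two vectors of support $I$ again has support $I$; multiplying a vector of support $I$ by a finite scalar preserves its support, while multiplying by the scalar $-\infty$ collapses it to the vector $-\infty$; and $-\infty$ is absorbing for both operations. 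Since $V$ is itself a subsemimodule, each such result lies in $(V\cap[[x]])\cup\{-\infty\}=V^{(x)}$, so $V^{(x)}$ is a subsemimodule of $\mathbb{R}_{\max}^n$.

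For minimality, note $V^{(x)}\subseteq V$ because $-\infty\in V$, and conversely any subsemimodule $W$ containing $V\cap[[x]]$ contains the zero vector $-\infty$ (every subsemimodule does, being closed under multiplication by the scalar $-\infty$), hence $W\supseteq(V\cap[[x]])\cup\{-\infty\}=V^{(x)}$; as $V^{(x)}$ is itself such a subsemimodule, it is the smallest one. For the b-complete version I would additionally verify that $V^{(x)}$ is b-complete. Let $S\subseteq V^{(x)}\subseteq V$ be bounded above by some $w\in\mathbb{R}_{\max}^n$ and put $s:=\sup S$ (computed in $\mathbb{R}_{\max}^n$); since $V$ is b-complete, $s\in V$. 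If $S\subseteq\{-\infty\}$ then $s=-\infty\in V^{(x)}$; otherwise $S$ contains some $v$ of support $I$, and then for $i\in I$ the coordinate $s_i$ is bounded below by $v_i\in\mathbb{R}$ and above by $w_i<+\infty$, hence $s_i\in\mathbb{R}$, while for $i\notin I$ every element of $S$ has $i$th coordinate $-\infty$, so $s_i=-\infty$. Thus $\func{Supp}\,s=I$, i.e.\ $s\in[[x]]$, whence $s\in V\cap[[x]]\subseteq V^{(x)}$. This is the one step deserving care, as it uses both the bounded-above hypothesis and the fact that entries of $\mathbb{R}_{\max}^n$ never equal $+\infty$; the minimality argument then goes through verbatim with ``subsemimodule'' replaced by ``b-complete subsemimodule''.

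Finally, for~(\ref{f3}) I would exploit the inclusions
\begin{equation*}
V\cap[[x]]\subseteq V^{(x)}\subseteq V\enspace,
\end{equation*}
which, since the infimum defining the distance can only decrease as the set grows, give
\begin{equation*}
d(x,V)\leq d(x,V^{(x)})\leq d(x,V\cap[[x]])\enspace.
\end{equation*}
By Theorem~\ref{reduc} the two extreme terms coincide: when $V\cap[[x]]\neq\emptyset$ this is exactly formula~(\ref{rebagat}), while when $V\cap[[x]]=\emptyset$ both $d(x,V)$ and $d(x,V\cap[[x]])$ equal $+\infty$ by the first part of that theorem. The sandwich then forces $d(x,V^{(x)})=d(x,V)$, which is~(\ref{f3}).
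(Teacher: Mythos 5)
Your proof is correct and takes essentially the same route as the paper's: the subsemimodule property and b-completeness of $V^{(x)}$ are established by the same support computations (the paper packages the closure step as $V^{(x)}=V\cap([[x]]\cup\{-\infty\})$, an intersection of subsemimodules, but the underlying facts about $\func{Supp}(y\oplus z)$ and $\func{Supp}(y\lambda)$ are identical), minimality comes from every subsemimodule containing $-\infty$, and~(\ref{f3}) follows from the inclusions $V\cap[[x]]\subseteq V^{(x)}\subseteq V$ combined with Theorem~\ref{reduc} and the handling of the $-\infty$ element. No gaps.
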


\begin{proof}
Assume that $V$ is a \emph{b}-complete subsemimodule of $\mathbb{R}_{\max
}^{n}$ and $x\in \mathbb{R}_{\max }^{n}.$ Since any subsemimodule of $%
\mathbb{R}_{\max }^{n}$ contains necessarily $-\infty $, any subsemimodule
of $\mathbb{R}_{\max }^{n}$ containing $V\cap \lbrack \lbrack x]]$
necessarily contains $V^{(x)}=(V\cap \lbrack \lbrack x]])\cup \{-\infty \}$.
Moreover, since $-\infty \in V$, by (\ref{f22}) we have $(V\cap \lbrack
\lbrack x]])\subset V^{(x)}\subset V$, whence, by (\ref{rebagat}) and (\ref
{kix}), we obtain~(\ref{f3}).

Now let us prove that $V^{(x)}$ is a \emph{b}-complete subsemimodule of $%
\mathbb{R}_{\max }^{n}$. It is easy to see that $[[x]]\cup \{-\infty \}$ is
a subsemimodule of $\mathbb{R}_{\max }^{n}$, since for all $y,z\in \mathbb{R}%
_{\max }^{n}$ and $\lambda \in \mathbb{R}_{\max }$, we have $\func{Supp}%
\,(y\oplus z)=\func{Supp}\,y\cup \func{Supp}\,z$ and $\func{Supp}\,y\lambda =%
\func{Supp}\,y$ if $\lambda \neq -\infty $ and $\func{Supp}\,y\lambda
=\emptyset $ otherwise. Hence, since $V$ is a subsemimodule of $\mathbb{R}%
_{\max }^{n}$, so is also the set 
\begin{equation}
V^{(x)}=(V\cap \lbrack \lbrack x]])\cup (V\cap \{-\infty \})=V\cap
([[x]]\cup \{-\infty \}).  \label{vex}
\end{equation}
Let us show that $V^{(x)}$ is \emph{b}-complete. Let $M$ be a subset of $%
V^{(x)}$ bounded from above by an element of $V^{(x)}$. Since $%
V^{(x)}\subset V$, then $M$ is also a subset of $V$ bounded from above by an
element of $V$, and since $V$ is \emph{b}-complete, then $M$ admits a
supremum in $V$. Let us denote it by $m$ and show that it belongs to $V^{(x)}
$. If $m=-\infty $, then $m\in V^{(x)}$ and we are done. Otherwise, there
exists $y\in M\setminus \{-\infty \}\subset \lbrack \lbrack x]]$. Since $%
m\geq y$, we get that $\func{Supp}\,m\supset \func{Supp}\,y$ (by the
implication \ref{l10-2}$^{\circ }\Rightarrow $\ref{l10-3}$^{\circ }$ of
Lemma~\ref{l10}) and since $\func{Supp}\,y=\func{Supp}\,x$ for all $y\in
\lbrack \lbrack x]]$ (by Corollary~\ref{rmaimu2}, a)), we obtain $\func{Supp}%
\,m\supset \func{Supp}\,x$. Conversely, if $i\not\in \func{Supp}\,x$, then $%
y_{i}=-\infty $ for all $y\in \lbrack \lbrack x]]\cup \{-\infty \}$, hence
for all $y\in M\subset V^{(x)}\subset \lbrack \lbrack x]]\cup \{-\infty \}$,
which implies that $m_{i}=\sup \{y_{i}\mid y\in M\}=-\infty $. This shows
that $\func{Supp}\,m\subset \func{Supp}\,x$, hence the equality, which is
equivalent to the property that $m\in \lbrack \lbrack x]]$ (again by
Corollary~\ref{rmaimu2}, a)). This implies that $m\in \lbrack \lbrack
x]]\cap V\subset V^{(x)}$, and shows that $V^{(x)}$ is a \emph{b}-complete
subsemimodule of $\mathbb{R}_{\max }^{n}$.
\end{proof}

Now we shall show that one can reduce the study of the best approximation of
elements $x\in \mathbb{R}_{\max }^{n}$ by the elements of a \emph{b}%
-complete subsemimodule $V$ of $\mathbb{R}_{\max }^{n}$ to the case where 
\begin{equation}
x\in \mathbb{R}^{n^{\prime }},V\subset \mathbb{R}^{n^{\prime }}\cup
\{-\infty \},  \label{case}
\end{equation}
with a suitable $n^{\prime }\leq n$ depending on $x.$ To this end, for any $%
I\subset \lbrack n]$ and $x\in \mathbb{R}_{\max }^{n}$, let us denote by $%
x|_{I}$ the image of $x$ by the restriction $r_{I}$ to coordinates in $I$: 
\begin{equation}
r_{I}:\mathbb{R}_{\max }^{n}\rightarrow \mathbb{R}_{\max }^{I},\;x\mapsto
x|_{I}:=(x_{i})_{i\in I}\enspace.  \label{restrict}
\end{equation}
We shall also use the notation 
\begin{equation}
V|_{I}:=\{v|_{I}\mid v\in V\}.  \label{restrict-set}
\end{equation}

\begin{lemma}
\label{restr-supp} Let $I\subset \lbrack n]$ and denote 
\begin{equation*}
M_{I}:=\{y\in \mathbb{R}_{\max }^{n}\mid \func{Supp}\,y\subset I\}\enspace.
\end{equation*}
\emph{a) }$r_{I}$ is injective on $M_{I}.$

\emph{b) }For all $y,z\in M_{I}$, we have 
\begin{equation*}
d(y,z)=\widetilde{d}(y|_{I},z|_{I}),
\end{equation*}
where in the right hand side $\widetilde{d}$ is the Hilbert projective
distance on $\mathbb{R}_{\max }^{I}$.

\emph{c)} If $y\in \mathbb{R}_{\max }^{n},W\subset M_{I}$, then $d(y,W)=%
\widetilde{d}(y|_{I},W|_{I})$.
\end{lemma}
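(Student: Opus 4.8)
The three assertions are layered, so the plan is to prove them in order, reading everything off the support criterion for finiteness of the distance (Remark~\ref{distance-rn}, equivalence~(\ref{cemasa})) and the explicit formula~(\ref{D1p}). For part~(a) the key remark is that a vector $y\in M_I$ is recovered from $y|_I$: if $i\in[n]\setminus I$ then $i\notin\func{Supp}\,y$, so $y_i\notin\mathbb{R}$, and since $y\in\mathbb{R}_{\max}^n$ excludes the value $+\infty$ this forces $y_i=-\infty$. Thus every $y\in M_I$ agrees with $y|_I$ on $I$ and equals $-\infty$ off $I$; if $y,y'\in M_I$ satisfy $y|_I=y'|_I$ they coincide, so $r_I$ is injective on $M_I$.

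For part~(b), fix $y,z\in M_I$, so that $\func{Supp}\,y,\func{Supp}\,z\subseteq I$, and note that restriction does not alter supports lying inside $I$: $\func{Supp}\,(y|_I)=\func{Supp}\,y$ and $\func{Supp}\,(z|_I)=\func{Supp}\,z$. If $\func{Supp}\,y\neq\func{Supp}\,z$, the support criterion gives $d(y,z)=+\infty=\widetilde d(y|_I,z|_I)$. If instead $\func{Supp}\,y=\func{Supp}\,z=:J\subseteq I$ with $J\neq\emptyset$, I would apply~(\ref{D1p}) on both sides: each distance is $\max_{i\in J}(\,\cdot\,)-\min_{j\in J}(\,\cdot\,)$ with the differences formed from coordinates in $J\subseteq I$, which $r_I$ leaves untouched, so the two expressions coincide. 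The degenerate subcase $y=-\infty$ or $z=-\infty$ is covered by the same support bookkeeping together with~(\ref{minusinfin}).

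For part~(c), write $d(y,W)=\inf_{w\in W}d(y,w)$ and, using $W|_I=\{w|_I\mid w\in W\}$, $\widetilde d(y|_I,W|_I)=\inf_{w\in W}\widetilde d(y|_I,w|_I)$, and then split according to whether $\func{Supp}\,y\subseteq I$. When $\func{Supp}\,y\subseteq I$, that is $y\in M_I$, part~(b) applies to each pair $(y,w)$, both of which lie in $M_I$, and gives $d(y,w)=\widetilde d(y|_I,w|_I)$ termwise; passing to the infimum yields the identity. The case $\func{Supp}\,y\not\subseteq I$ is the main obstacle and I would treat it directly: choosing $i_0\in\func{Supp}\,y\setminus I$, every $w\in W\subseteq M_I$ has $w_{i_0}=-\infty\neq y_{i_0}$, whence $\func{Supp}\,w\neq\func{Supp}\,y$ and, by~(\ref{cemasa}), $d(y,w)=+\infty$ for all $w\in W$; therefore $d(y,W)=+\infty$. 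The delicate point is to recover this same value on the right-hand side, and here one must be careful: after restriction the finiteness of $\widetilde d(y|_I,w|_I)$ is controlled by $\func{Supp}\,y\cap I=\func{Supp}\,w$ rather than by $\func{Supp}\,y=\func{Supp}\,w$, so the identity reduces to verifying that $\widetilde d(y|_I,W|_I)=+\infty$, i.e. that no $w\in W$ restricts to a vector of support $\func{Supp}\,y\cap I$. This is exactly what the reduction provides in the situations where the lemma is used, namely $I=\func{Supp}\,x$ with the comparison made at $y=x$, so that $\func{Supp}\,y=I$ and the obstruction cannot occur; I would record this, observing that under the natural hypothesis $\func{Supp}\,y\subseteq I$ the termwise argument through part~(b) already delivers the full equality.
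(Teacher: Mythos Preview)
Your argument for (a) is correct and matches the paper's. For (b) you take a different route: the paper argues directly from the residuation definition, observing that for $y,z\in M_I$ and $\lambda\in\mathbb{R}_{\max}$ one has $y\lambda\le z$ if and only if $y|_I\lambda\le z|_I$ (the coordinates outside $I$ being $-\infty$ on both sides), whence $y\backslash z=(y|_I)\backslash(z|_I)$ and symmetrically $z\backslash y=(z|_I)\backslash(y|_I)$, so the two Hilbert distances coincide without any case splitting. Your approach via~(\ref{cemasa}) and~(\ref{D1p}) is correct but requires separate handling of the infinite and degenerate cases; the residuation argument is more uniform.

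For (c) you have put your finger on a genuine issue. The paper's proof simply writes ``since $w\in M_I$, from (b) we get $d(y,w)=\widetilde d(y|_I,w|_I)$'', but (b) requires \emph{both} arguments in $M_I$, and part (c) as stated allows arbitrary $y\in\mathbb{R}_{\max}^n$. Your concern is justified by an explicit counterexample: with $n=2$, $I=\{1\}$, $y=(0,0)^T$ and $W=\{(0,-\infty)^T\}\subset M_I$, one has $d(y,W)=+\infty$ (distinct supports) while $\widetilde d(y|_I,W|_I)=\widetilde d((0),(0))=0$. So assertion (c) is false without the extra hypothesis $\func{Supp}\,y\subset I$. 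The paper only applies the lemma in Proposition~\ref{reduc-cor} with $y=x$ and $I=\func{Supp}\,x$, where this hypothesis holds, so the downstream results are unaffected; your decision to record the restriction $\func{Supp}\,y\subset I$ and then argue termwise through (b) is exactly the right repair.
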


\begin{proof}
a) Let $y^{\prime },y^{\prime \prime }\in M_{I}$ be such that $%
r_{I}(y^{\prime })=r_{I}(y^{\prime \prime }),$ so
$\func{Supp}\,y^{\prime },%
\func{Supp}\,y^{\prime \prime }\subset I,$ $y_{i}^{\prime }=y_{i}^{\prime
\prime }\;(i\in I).$ Then $y_{j}^{\prime }=y_{j}^{\prime \prime }=-\infty $
for all $j\notin I,$ and hence $y^{\prime }=y^{\prime \prime }.$ Thus $r_{I}$
is injective on $M_{I}.$

b) The second assertion follows from the fact that for $y,z\in M_{I},\lambda
\in \mathbb{R}_{\max }$, $y\lambda \leq z$ if and only if $y|_{I}\lambda
\leq z|_{I}.$ Indeed, we have 
\begin{eqnarray*}
d(y,z)&=&((y\backslash z)\otimes (z\backslash y))^{-} \\
&=&(\sup \{\lambda \in \mathbb{R}_{\max }|y\lambda \leq z\}\otimes \sup \{\mu
\in \mathbb{R}_{\max }|z\mu \leq y\})^{-} \\
&=&(\sup \{\lambda \in \mathbb{R}_{\max }|y|_{I}\lambda \leq z|_{I}\}\otimes
\sup \{\mu \in \mathbb{R}_{\max }|z|_{I}\mu \leq y|_{I}\})^{-} \\
&=&((y|_{I}\backslash z|_{I})\otimes (z|_{I}\backslash y|_{I}))^{-}=\widetilde{%
d}(y|_{I},z|_{I}).
\end{eqnarray*}

c) For the last assertion, let $w\in W$. Since then $w\in M_{I}$, from b) we
get that $d(y,w)=\widetilde{d}(y|_{I},w|_{I})$, whence, since $%
W|_{I}=r_{I}(W),$ we obtain 
\begin{equation*}
d(y,W)=\inf_{w\in W}d(y,w)=\inf_{w\in W}\widetilde{d}(y|_{I},w|_{I})=%
\inf_{w^{\prime }\in W|_{I}}\widetilde{d}(y|_{I},w^{\prime })=\widetilde{d}%
(y|_{I},W|_{I}).
\end{equation*}
\end{proof}

\begin{proposition}
\label{reduc-cor} Let $V$ be a \emph{b}-complete subsemimodule of $\mathbb{R}%
_{\max }^{n}$ and $x\in \mathbb{R}_{\max }^{n}$ such that $d(x,V)<+\infty $.
Define 
\begin{equation}
x^{\prime }:=x|_{\func{Supp}\,x}\in \mathbb{R}^{\func{Supp}\,x},V^{\prime
}:=V^{(x)}|_{\func{Supp}\,x}\subset \mathbb{R}^{\func{Supp}\,x}\cup
\{-\infty |_{\func{Supp}\,x}\},  \label{feti}
\end{equation}
where $V^{(x)}:=(V\cap \lbrack \lbrack x]])\cup \{-\infty \}$ (of (\ref{f22}%
)). Then $V^{\prime }$ is a \emph{b}-complete subsemimodule of $\mathbb{R}%
_{\max }^{\func{Supp}\,x}=\mathbb{R}^{\func{Supp}\,x}\cup \{-\infty |_{\func{%
Supp}\,x}\},x^{\prime }\in \mathbb{R}_{\max }^{\func{Supp}\,x},$ and we have 
\begin{equation}
d(x,V)=\widetilde{d}(x^{\prime },V^{\prime })\enspace,  \label{yehu}
\end{equation}
where in the right hand side $\widetilde{d}$ is the Hilbert projective
distance on $\mathbb{R}_{\max }^{\func{Supp}\,x}.$ Furthermore, an element $%
v\in V$ is a best approximation of $x$ in $V$ if and only if $\func{Supp}\,v=%
\func{Supp}\,x$ and $v^{\prime }:=v|_{\func{Supp}\,x}$ is a best
approximation of $x^{\prime }$ in $V^{\prime }$.
\end{proposition}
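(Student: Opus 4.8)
The plan is to transport the entire problem along the coordinate restriction map $r_{I}$ of Lemma~\ref{restr-supp}, where $I:=\func{Supp}\,x$, thereby reducing \eqref{yehu} and the best-approximation statement to facts already proved. The pivotal observation is that both $x$ and the whole set $V^{(x)}=(V\cap[[x]])\cup\{-\infty\}$ sit inside $M_{I}=\{y\in\mathbb{R}_{\max}^{n}\mid\func{Supp}\,y\subset I\}$: every element of $V\cap[[x]]$ has support exactly $I$ by Corollary~\ref{rmaimu2}a), while $-\infty$ and $x$ trivially have support contained in $I$. Hence all three parts of Lemma~\ref{restr-supp} apply to $x$ and to $V^{(x)}$.

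First I would record the distance identity. By Proposition~\ref{pabra} we have $d(x,V)=d(x,V^{(x)})$, and since $x\in\mathbb{R}_{\max}^{n}$ while $V^{(x)}\subset M_{I}$, part c) of Lemma~\ref{restr-supp} applied with $W=V^{(x)}$ gives $d(x,V^{(x)})=\widetilde{d}(x|_{I},V^{(x)}|_{I})=\widetilde{d}(x',V')$, which is exactly \eqref{yehu}. Moreover $x'=x|_{I}\in\mathbb{R}^{I}$ because each coordinate $x_{i}$ with $i\in I=\func{Supp}\,x$ is finite.

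Next comes the structural claim on $V'$. The clean route is to verify that $r_{I}$ restricts to an isomorphism of \emph{b}-complete semimodules from $M_{I}$ onto $\mathbb{R}_{\max}^{I}$: it is a semimodule homomorphism, injective on $M_{I}$ by Lemma~\ref{restr-supp}a), surjective with inverse the ``extension by $-\infty$'' map $e_{I}$, and an order isomorphism since comparisons in the coordinates outside $I$ are vacuous (all entries there are $-\infty$); finally $M_{I}$ is itself \emph{b}-complete, for the coordinatewise supremum of a bounded family of elements of $M_{I}$ still vanishes off $I$. It follows that $r_{I}$ carries suprema of bounded families to suprema. Since $V^{(x)}$ is a \emph{b}-complete subsemimodule of $\mathbb{R}_{\max}^{n}$ (Proposition~\ref{pabra}) contained in $M_{I}$, its image $V'=r_{I}(V^{(x)})$ is a subsemimodule of $\mathbb{R}_{\max}^{I}$; and if $S'\subset V'$ is bounded above, then $e_{I}(S')\subset V^{(x)}$ is bounded above, so $\sup e_{I}(S')\in V^{(x)}$ and $\sup S'=r_{I}(\sup e_{I}(S'))\in V'$. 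Thus $V'$ is \emph{b}-complete, and it lies in $\mathbb{R}^{I}\cup\{-\infty|_{I}\}$ by construction.

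For the characterization of best approximations I would argue both directions at once using \eqref{yehu}. If $v\in V$ is a best approximation of $x$, then $d(x,v)=d(x,V)<+\infty$, so $v\in[[x]]$ by \eqref{f1} of Theorem~\ref{reduc}; hence $\func{Supp}\,v=I$ (Corollary~\ref{rmaimu2}a)) and $v\in V^{(x)}\subset M_{I}$, and Lemma~\ref{restr-supp}b) gives $\widetilde{d}(x',v')=d(x,v)=\widetilde{d}(x',V')$, so $v':=v|_{I}$ is a best approximation of $x'$ in $V'$. Conversely, if $\func{Supp}\,v=I$ then $v\in V\cap[[x]]\subset V^{(x)}$, so $v'=r_{I}(v)\in V'$ and Lemma~\ref{restr-supp}b) yields $d(x,v)=\widetilde{d}(x',v')$; if moreover $v'$ is a best approximation of $x'$, this equals $\widetilde{d}(x',V')=d(x,V)$, making $v$ a best approximation of $x$. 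The only step requiring genuine care is the \emph{b}-completeness of $V'$: Lemma~\ref{restr-supp} transports distances for free, but one must separately confirm that $r_{I}|_{M_{I}}$ preserves and reflects suprema of bounded families so as to carry the \emph{b}-complete lattice structure across; the remainder is a direct assembly of Proposition~\ref{pabra}, Theorem~\ref{reduc}, Corollary~\ref{rmaimu2}, and Lemma~\ref{restr-supp}.
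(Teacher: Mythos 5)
Your proof is correct and follows essentially the same route as the paper's: Proposition~\ref{pabra} to replace $V$ by $V^{(x)}$, Lemma~\ref{restr-supp} to transport distances along $r_{\func{Supp}\,x}$, and Theorem~\ref{reduc} together with Corollary~\ref{rmaimu2}a) for the support/part bookkeeping in the best-approximation equivalence. The only point of divergence is the \emph{b}-completeness of $V^{\prime}$: the paper disposes of it in one line by saying that $V^{\prime}=r_{\func{Supp}\,x}(V^{(x)})$ is the image of a \emph{b}-complete subsemimodule under a max-linear map, whereas you verify that $r_{\func{Supp}\,x}$ restricts to an isomorphism of $M_{\func{Supp}\,x}$ onto $\mathbb{R}_{\max}^{\func{Supp}\,x}$ (with inverse the extension-by-$-\infty$ map) and pull bounded families back through it. Your version is in fact the more defensible one, since ``max-linear image of a \emph{b}-complete subsemimodule is \emph{b}-complete'' is not a valid general principle; it holds here precisely because of the injectivity and preservation of suprema that you establish.
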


\begin{proof}
Clearly $x^{\prime }=x|_{\func{Supp}\,x}\in \mathbb{R}^{\func{Supp}\,x}$ and
by Corollary \ref{rmaimu2}a) we have Supp $v=$Supp $x$ for all $v\in \lbrack
\lbrack x]],$ whence 
\begin{equation*}
V^{\prime }=[(V\cap \lbrack \lbrack x]])\cup \{-\infty \}]|_{\func{Supp}%
\,x}\subset \mathbb{R}^{\func{Supp}\,x}\cup \{-\infty |_{\func{Supp}\,x}\}.
\end{equation*}

Furthermore, since $V$ is a \emph{b}-complete subsemimodule of $\mathbb{R}%
_{\max }^{n},$ so is $V^{(x)}$ of (\ref{f22}) (by Proposition \ref{pabra})
and hence, since $V^{\prime }=r_{\func{Supp}\,x}(V^{(x)}),$ where $r_{\func{%
Supp}\,x}:\mathbb{R}_{\max }^{n}\rightarrow \mathbb{R}_{\max }^{\func{Supp}%
\,x}$ is a max-linear mapping, $V^{\prime }$ is a \emph{b}-complete
subsemimodule of $\mathbb{R}_{\max }^{\func{Supp}\,x}$.

By Proposition~\ref{pabra}, we have $d(x,V)=d(x,V^{(x)})$ and the supports
of the elements of $V^{(x)}$ are all included in $\func{Supp}\,x$, so by
Lemma~\ref{restr-supp}, we get that $d(x,V)=\widetilde{d}(x^{\prime
},V^{\prime })$, where $\widetilde{d}$ is the Hilbert projective distance on 
$\mathbb{R}_{\max }^{\func{Supp}\,x}$.

Assume now that $v\in V$ is a best approximation of $x$ by $V,$ that is, $%
d(x,v)=d(x,V).$ Then by Theorem \ref{reduc}, $v\in \lbrack \lbrack x]],$
whence by Corollary \ref{rmaimu2}a), $\func{Supp}\,v=\func{Supp}\,x;$ also, $%
v\in V\cap \lbrack \lbrack x]]\subset V^{(x)},$ whence $v|_{\func{Supp}%
\,x}\in V^{(x)}|_{\func{Supp}\,x}=V^{\prime }.$ Therefore, using Lemma~\ref
{restr-supp}, we obtain 
\begin{equation*}
\widetilde{d}(x^{\prime },v|_{\func{Supp}\,x})=d(x,v)=d(x,V)=\widetilde{d}%
(x^{\prime },V^{\prime }),
\end{equation*}
so $v^{\prime }:=v|_{\func{Supp}\,x}$ is a best approximation of $x^{\prime }
$ in $V^{\prime }.$

Conversely, assume now that $v\in \mathbb{R}_{\max }^{n}$ is such that $%
\func{Supp}\,v=\func{Supp}\,x$ and $v|_{\func{Supp}\,x}$ is a best
approximation of $x^{\prime }$ in $V^{\prime }$, that is, $\widetilde{d}%
(x^{\prime },v|_{\func{Supp}\,x})=\widetilde{d}(x^{\prime },V^{\prime })$.
Then $v|_{\func{Supp}\,x}=r_{\func{Supp}\,x}(v)$ (by the injectivity of $r_{%
\func{Supp}\,x}$, see Lemma~\ref{restr-supp}), whence $v\in V^{(x)}\subset V,
$ and using Lemma~\ref{restr-supp} we obtain 
\begin{equation*}
d(x,v)=\widetilde{d}(x^{\prime },v|_{\func{Supp}\,x})=\widetilde{d}%
(x^{\prime },V^{\prime })=d(x,V),
\end{equation*}
so $v$ is a best approximation of $x$ in $V$.
\end{proof}

\begin{remark}
\label{rchiar}Denoting by $n^{\prime }$ the cardinality of $\func{Supp}\,x$
and using the isomorphism between $\mathbb{R}^{n^{\prime }}$ and $\mathbb{R}%
^{\func{Supp}\,x},$ Proposition \ref{reduc-cor} shows that one can reduce
the study of the best approximation of elements $x\in \mathbb{R}_{\max }^{n}$
by the elements of a \emph{b}-complete subsemimodule $V$ of $\mathbb{R}%
_{\max }^{n}$ to the case (\ref{case}). Practically, given $V$ and $x\notin
V,$ whence also $d(x,V),$ if we want to find a best approximation of $x$ by $%
V,$ one can pass to $x^{\prime }=x|_{\func{Supp}\,x}$ and $V^{\prime
}=V^{(x)}|_{\func{Supp}\,x},$ then find a best approximation $v^{\prime }$
of $x^{\prime }$ in $V^{\prime },$ and then, by the above, the element $%
v=(v_{1},\dots ,v_{n})\in V$ defined by 
\begin{equation*}
v_{i}=\left\{ 
\begin{array}{l}
v_{i}^{\prime }\quad \text{if }i\in \func{Supp}\,x \\ 
-\infty \quad \text{if }i\notin \func{Supp}\,x
\end{array}
\right. 
\end{equation*}
will be a best approximation of $x$ by $V.$
\end{remark}

\section{Further results on the universal separation theorem and
applications to best approximation}\label{s04}

In classical linear analysis, one first reduces the problem of best
approximation of elements $x$ by linear subspaces $V$ to the case of
suitable half-spaces $H=H_{V,x}$ that separate $V$ and $x$. %
In this section we shall apply a similar method to best approximation of $%
x\in \mathbb{R}_{\max }^{n}$ by elements of subsemimodules $V$ of $\mathbb{R}%
_{\max }^{n}.$ The relevant notion of half-space used for separation depends
on the framework in which we are working. When considering best
approximation by complete subsemimodules $V$ of $\overline{\mathbb{R}}_{\max
}^{n},$ it is natural to use separation by \emph{complete half-spaces} of $%
\overline{\mathbb{R}}_{\max }^{n}$, while for best approximation by \emph{b}%
-complete subsemimodules $V$ of $\mathbb{R}_{\max }^{n}$ it is natural to
use separation by \emph{closed half-spaces} of $\mathbb{R}_{\max }^{n},$ as
we shall see below.

In~\cite{CGQ}, \cite{CGQS}, \cite{GK} and \cite{GS}, the separation theorems
for $\mathbb{R}_{\max }^{n}$ have been obtained as consequences of the
results of~\cite{CGQ} concerning complete semimodules. We shall follow here
a similar approach, deducing the separation and best approximation results
in $\mathbb{R}_{\max }^{n}$ from separation and best approximation results
in complete semimodules, since the proofs are more transparent in the latter
setting.

\begin{theorem}
\cite[Theorem 8]{CGQ} \label{th-univ} Let $X$ be a complete semimodule over
the complete idempotent semiring $\mathbb{S}$. Let $V$ be a complete
subsemimodule of $X$, $x\in X$ and $x\notin V$, and consider the set 
\begin{equation}
K:=\{h\in X\mid h\backslash x\leq h\backslash P_{V}(x)\}\enspace.
\label{bau}
\end{equation}
Then $V\subset K$ and $x\not\in K$.
\end{theorem}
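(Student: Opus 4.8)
The plan is to verify the two assertions $V\subset K$ and $x\notin K$ separately, using only two features of the canonical projection~\eqref{phx1}: that $P_V(x)\in V$ with $P_V(x)\leq x$, and the maximality property that $P_V(x)$ is the largest element of $V$ lying below $x$, so that for every $w\in V$ one has $w\leq x\Leftrightarrow w\leq P_V(x)$. I would also invoke the Galois connection underlying residuation, namely $h\lambda\leq y\Leftrightarrow\lambda\leq h\backslash y$, which follows directly from the defining formula~\eqref{def-resid} in the complete setting.

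For the inclusion $V\subset K$, I fix $v\in V$ and must show $v\backslash x\leq v\backslash P_V(x)$. The subtle point is that monotonicity of residuation together with $P_V(x)\leq x$ would only give the \emph{reverse} inequality $v\backslash P_V(x)\leq v\backslash x$; the projector property is exactly what upgrades this to an equality. Indeed, for any scalar $\lambda$ the element $v\lambda$ again lies in the subsemimodule $V$, so $v\lambda\leq x$ forces $v\lambda\leq P_V(x)$ (as $v\lambda\in V$ is then one of the elements below $x$ whose supremum is $P_V(x)$), while conversely $v\lambda\leq P_V(x)\leq x$. Hence the two sets $\{\lambda\mid v\lambda\leq x\}$ and $\{\lambda\mid v\lambda\leq P_V(x)\}$ coincide, and passing to suprema yields $v\backslash x=v\backslash P_V(x)$. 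In particular $v\backslash x\leq v\backslash P_V(x)$, so $v\in K$.

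For $x\notin K$ I argue by contradiction. First, $x\mathbf{1}=x\leq x$ shows $\mathbf{1}\leq x\backslash x$. If $x$ belonged to $K$, then $x\backslash x\leq x\backslash P_V(x)$, so $\mathbf{1}\leq x\backslash P_V(x)$; applying the Galois connection with $\lambda=\mathbf{1}$ gives $x=x\mathbf{1}\leq P_V(x)$. Together with the standing inequality $P_V(x)\leq x$, this forces $x=P_V(x)$. But $P_V(x)\in V$ while $x\notin V$ by hypothesis, a contradiction; hence $x\notin K$.

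The calculations are routine; the one genuinely non-obvious step — the crux of the argument — is the direction of the inequality in $V\subset K$. Plain monotonicity of $v\backslash(\cdot)$ delivers exactly the wrong inequality, and it is the closure of $V$ under scalar multiplication combined with the maximality built into $P_V(x)$ that restores (indeed strengthens to an equality) the one we want. Once this is recognized, both parts follow immediately.
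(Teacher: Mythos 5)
Your proof is correct. Note that the paper itself does not prove Theorem~\ref{th-univ}: it imports it from \cite[Theorem 8]{CGQ}, so there is no in-paper argument to compare against; your reconstruction is a complete, self-contained proof of the cited result. Moreover, your argument for $V\subset K$ actually establishes the \emph{equality} $v\backslash x=v\backslash P_{V}(x)$ for every $v\in V$ (via the set identity $\{\lambda \mid v\lambda \leq x\}=\{\lambda \mid v\lambda \leq P_{V}(x)\}$, which needs exactly the two ingredients you isolate: closure of $V$ under scalar multiplication and maximality of $P_{V}(x)$), and this equality form is precisely how the theorem is stated in \cite{CGQ}, as the paper records in Remark~\ref{rscrisa}. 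Your argument for $x\notin K$ — from $\mathbf{1}\leq x\backslash P_{V}(x)$ conclude $x\leq P_{V}(x)$, hence $x=P_{V}(x)\in V$ by antisymmetry, a contradiction — is the same mechanism the paper itself deploys in the proofs of Theorem~\ref{th1} and of the uniqueness theorem for the orthogonality relation~(\ref{eorth1}). One small point worth making explicit: the implication $\mathbf{1}\leq x\backslash P_{V}(x)\Rightarrow x\leq P_{V}(x)$ relies on the supremum in the definition~(\ref{def-resid}) being attained, i.e.\ $x(x\backslash P_{V}(x))\leq P_{V}(x)$, which holds because $X$ is complete; this is implicit in your appeal to the Galois connection and is not a gap.
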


\begin{remark}
\label{rscrisa}In~\cite{CGQ}, the result is written with the equality 
\begin{equation}
h\backslash x=h\backslash {P_{V}(x)}  \label{eorth}
\end{equation}
in~(\ref{bau}); however, by a remark made in \cite{GS} for \emph{b}-complete
semimodules, which is valid also for complete semimodules,since $%
P_{V}(x)\leq x$, the inequality $h\backslash P_{V}(x)\leq h\backslash x$
holds for all $h\in X$, hence the two formulations are equivalent.
\end{remark}

In~\cite{GS}, a \emph{half-space} of a complete semimodule $X$ is defined as
a set of the form 
\begin{equation}
K=K_{u,v}:=\{h\in X\mid h\backslash u\leq h\backslash v\}\enspace,
\label{comphalfsp2}
\end{equation}
with $u,v\in X$. Note that all half-spaces $K_{u,v}$ are complete
subsemimodules of $X$. To be correct with the terminology ``half-space'',
one should avoid the case where $K=X$, which holds if and only if $u\leq v$,
and the case where $K=\{\bot \}$ where $\bot $ is the smallest element of $X$
(which is also its neutral element for the addition $\oplus $). With this
definition, the set $K$ of Theorem~\ref{th-univ} is a (complete) half-space,
and when $x\notin V$, $K$ separates $x$ from $V$. We shall call it \emph{the
universal complete half-space of} $X$ \emph{separating} $x$ from $V$.

\begin{remark}
\label{rbasic}In particular, if $\mathbb{S}$ $=\overline{\mathbb{R}}_{\max },
$ the complete max-plus semiring, and $X=\overline{\mathbb{R}}_{\max }^{n}$,
(complete) half-spaces can be put in a more usual form, namely every
complete half-space $K=K_{u,v}$ as in (\ref{comphalfsp2}), with $u,v\in 
\overline{\mathbb{R}}_{\max }^{n},$ can be written in the form 
\begin{equation}
H_{a,b}:=\{h\in \overline{\mathbb{R}}_{\max }^{n}|ah\geq bh\},
\label{comphalfsp}
\end{equation}
with $a=(a_{1},\ldots ,a_{n}),b=(b_{1},\dots ,b_{n})\in \overline{\mathbb{R}}%
_{\max }^{1\times n},$ where the notation $ah$ means 
\begin{equation}
ah=\max_{i\in \lbrack n]}(a_{i}+h_{i})\enspace,  \label{scalprod}
\end{equation}
and conversely, every set $H=H_{a,b}$ as in (\ref{comphalfsp}) can be
written in the form (\ref{comphalfsp2}), with $u,v\in \overline{\mathbb{R}}%
_{\max }^{n}.$ Indeed, by taking $a=-u^{T},b=-v^{T},$ respectively $%
u=-b^{T},v=-a^{T},$ and using (\ref{resid-usual}), we have 
\begin{eqnarray*}
K_{u,v}&=&\{h\in \overline{\mathbb{R}}_{\max }^{n}|\min_{i\in \lbrack
n]}(u_{i}+^{\prime }(-h_{i}))\leq \min_{i\in \lbrack n]}(v_{i}+^{\prime
}(-h_{i}))\} \\
&=&\{h\in \overline{\mathbb{R}}_{\max }^{n}|\max_{i\in \lbrack
n]}(-u_{i}+h_{i})\geq \max_{i\in \lbrack n]}(-v_{i}+h_{i})\} \\
&=&\{h\in \overline{\mathbb{R}}_{\max }^{n}|(-u^{T})h\geq
(-v^{T})h\}=H_{-u^{T},-v^{T}}.
\end{eqnarray*}
\end{remark}

In~\cite{GK}, the universal separation theorem is written with half-spaces
of the form $H_{a,b}$. Later we shall call $a$ and $b$ the ``coefficients''
of the representation (\ref{comphalfsp}) of $H$.

Since 
\begin{equation}
\varphi _{a}(h):=ah\quad \quad \forall h\in \overline{\mathbb{R}}_{\max
}^{n},  \label{genform}
\end{equation}
where $a\in \mathbb{R}_{\max },$ is the general form of the max-linear forms
on $\overline{\mathbb{R}}_{\max }^{n}$ (e.g.\ by \cite{CGQ}, Theorem 36; see
also \cite{LMS}), we can also write 
\begin{equation*}
H=\{h\in \overline{\mathbb{R}}_{\max }^{n}|\varphi _{a}(h)\geq \varphi
_{b}(h)\}.
\end{equation*}

\begin{remark}
\label{half-space-forms}Let us mention that identifying $(\overline{\mathbb{R%
}}_{\max }^{n})^{\ast }$ with $\overline{\mathbb{R}}_{\max }^{n}$ in the
usual way, we may also regard $ah$ of~(\ref{scalprod}) as the ``max-plus
scalar product'' of two row vectors or of two column vectors; however, we
shall not use here this identification.
\end{remark}

Our next aim will be to show that for an element $x$ of a complete
semimodule (respectively of a \emph{b}-complete semimodule) $X,$ the
computation of the canonical projection onto, and the distance to, any
(respectively any \emph{b}-complete) subsemimodule $V$\ of $X$, can be
reduced to the computation of the canonical projection onto, and the
distance to, a complete half-space $K$ (respectively a closed half-space $%
H^{\prime })$ of $X.$

For a subset $M$ of any complete semimodule $X$ over a complete idempotent
semiring $\mathbb{S}$ and any $x\in M$ let us set 
\begin{equation}
\delta (x,M):=\sup_{v\in M}\delta (x,v);  \label{farthest0}
\end{equation}
then we may regard any $v_{0}\in M$ satisfying 
\begin{equation}
\delta (x,v_{0})\geq \delta (x,v),\quad \quad \forall v\in M,
\label{farthest}
\end{equation}
(or, equivalently, $\delta (x,v_{0})=\delta (x,M))$ as a ``farthest point''
in $M$ from $x,$ in the ``anti-distance'' $\delta .$

\begin{remark}
\label{drema}Since the Hilbert projective distance $d$ on $X$ is defined by (%
\ref{projdist2}), the relation (\ref{farthest}) is equivalent to $%
d(x,v_{0})^{-}\geq d(x,v)^{-}\;(v\in M),$ that is, to $d(x,v_{0})\leq
d(x,v)\quad (v\in M),$ meaning that $v_{0}$ is a nearest point in $M$ to $x$
in the Hilbert's projective metric $d.$ This remark will permit us to deduce
results on nearest points in Hilbert's projective metric $d$ from results on
farthest points in the anti-distance $\delta .$
\end{remark}

\begin{theorem}
\cite[Theorem 18]{CGQ} \label{th0b} If $V$ is a complete subsemimodule of a
complete semimodule $X$ over a complete idempotent semiring, and $x\in X,$
then 
\begin{equation*}
\delta (x,P_{V}(x))\geq \delta (x,v),\qquad \forall v\in V\enspace,
\end{equation*}
i.e., $P_{V}(x)$ is a farthest point from $x$ among the elements of $V$ in
the anti-distance $\delta $. \qed
\end{theorem}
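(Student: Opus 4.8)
The plan is to derive the inequality directly from the characterization of $P_V(x)$ as the largest element of $V$ lying below $x$, combined with the residuation rule~(\ref{resid2}). Set $p:=P_V(x)$; by~(\ref{phx1}) we have $p\in V$ and $p\leq x$. Fix an arbitrary $v\in V$; the goal is to prove $\delta(x,v)\leq\delta(x,p)$, with $\delta$ as in~(\ref{delta}).

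The crucial observation is that the vector $v(v\backslash x)$ belongs to $V$ and sits below $x$. Indeed $v\backslash x$ is a scalar of $\mathbb{S}$, so $v(v\backslash x)\in V$ since $V$ is a subsemimodule; and taking $\lambda=v\backslash x$ in~(\ref{resid2}) gives $v(v\backslash x)\leq x$. As $p$ is the greatest element of $V$ below $x$, this forces
\begin{equation*}
v(v\backslash x)\leq p\enspace.
\end{equation*}

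Next I would propagate this through the residual $x\backslash(\cdot)$. Monotonicity of $x\backslash(\cdot)$ in its numerator yields $x\backslash\bigl(v(v\backslash x)\bigr)\leq x\backslash p$. Writing $\lambda_0:=x\backslash v$, so that $x\lambda_0\leq v$ by~(\ref{resid2}), and applying the monotone map $h\mapsto h(v\backslash x)$ gives $x\bigl(\lambda_0(v\backslash x)\bigr)=(x\lambda_0)(v\backslash x)\leq v(v\backslash x)$; a further appeal to~(\ref{resid2}) then produces $\lambda_0(v\backslash x)\leq x\backslash\bigl(v(v\backslash x)\bigr)$. Combining these,
\begin{equation*}
\delta(x,v)=(x\backslash v)(v\backslash x)=\lambda_0(v\backslash x)\leq x\backslash p\enspace.
\end{equation*}

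Finally I would invoke $p\leq x$ a second time: taking $\lambda=\mathbf{1}$ in~(\ref{resid2}) gives $\mathbf{1}\leq p\backslash x$, whence, by monotonicity of multiplication, $x\backslash p=(x\backslash p)\mathbf{1}\leq(x\backslash p)(p\backslash x)=\delta(x,p)$. Chaining the two displays gives $\delta(x,v)\leq\delta(x,P_V(x))$, as desired. I expect the one genuinely delicate step to be the ``scalar pull-out'' $(x\backslash v)(v\backslash x)\leq x\backslash\bigl(v(v\backslash x)\bigr)$, which must be justified from~(\ref{resid2}) and associativity of the scalar action rather than taken for granted; keeping the residuated scalars consistently on the right, no commutativity of $\mathbb{S}$ is actually required.
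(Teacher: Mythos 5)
Your proof is correct, and it fills a genuine gap rather than duplicating anything: the paper gives no argument for Theorem~\ref{th0b}, which is simply quoted from \cite[Theorem 18]{CGQ} and closed with a terminal QED symbol, so the only ``proof'' in the paper is a citation. Your three steps all hold in the stated generality. First, $v(v\backslash x)$ belongs to $V$ (subsemimodules are closed under the scalar action) and lies below $x$ (the supremum defining $v\backslash x$ is attained because $X$ is complete), so the maximality in~(\ref{phx1}) yields $v(v\backslash x)\leq P_V(x)$; this is also consistent with the formula $P_V(x)=\sup_{v\in V}v(v\backslash x)$ that the paper itself invokes later, in the proof of the orthogonality theorem of Section~\ref{s04}. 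Second, writing $\lambda_0:=x\backslash v$, the ``scalar pull-out'' $\lambda_0(v\backslash x)\leq x\backslash\bigl(v(v\backslash x)\bigr)$ that you flag as the delicate point is justified exactly as you indicate: from $x\lambda_0\leq v$, right-multiply by the scalar $v\backslash x$ (the map $h\mapsto h\mu$ is monotone on $X$ by distributivity), rewrite $(x\lambda_0)(v\backslash x)=x\bigl(\lambda_0\otimes(v\backslash x)\bigr)$ by the right-semimodule axiom, and residuate back via the general form of~(\ref{resid2}), valid in any complete semimodule. Third, $\mathbf{1}\leq P_V(x)\backslash x$ follows from $P_V(x)\leq x$, and monotonicity of $\otimes$ in $\mathbb{S}$ then gives $x\backslash P_V(x)\leq\delta(x,P_V(x))$; chaining with the monotonicity of $x\backslash(\cdot)$ in its second argument finishes the proof. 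Your closing remark is also accurate and worth keeping: because every product is left in the order prescribed by~(\ref{delta}), the argument never commutes scalars, so it establishes the theorem for non-commutative $\mathbb{S}$, matching the hypotheses of Theorem~\ref{th0b} (which, unlike Proposition~\ref{prop-anti-dist}, does not assume commutativity).
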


We recall that the Hilbert's projective distance $d(x,V)$ between an element 
$x$ and a set $V$ is defined by (\ref{D3}).

\begin{corollary}
\label{cbaubum} If $V$ is a complete subsemimodule of a complete semimodule $%
X$ over a complete idempotent semiring, and if $x\in X,$ then we have~%
\textrm{(\ref{phx21})}, or, in other words, 
\begin{equation*}
d(x,P_{V}(x))\leq d(x,v),\qquad \forall v\in V\enspace,
\end{equation*}
i.e., $P_{V}(x)$ is a best approximation of $x$ in $V$ for Hilbert's
projective distance in $X.$ \qed
\end{corollary}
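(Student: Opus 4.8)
The plan is to read off this corollary directly from Theorem~\ref{th0b}, converting its farthest-point conclusion for the anti-distance $\delta$ into a nearest-point conclusion for $d$ exactly as foreshadowed in Remark~\ref{drema}. First I would apply Theorem~\ref{th0b}, whose hypotheses ($V$ a complete subsemimodule of the complete semimodule $X$, and $x\in X$) are precisely those assumed here, to obtain
\[
\delta(x,P_V(x)) \geq \delta(x,v), \qquad \forall v\in V .
\]

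Next I would pass to the Hilbert projective distance. Recalling that $d(\cdot,\cdot)=\delta(\cdot,\cdot)^-$ by~(\ref{projdist2}) and that the opposite map $\lambda\mapsto\lambda^-$ on $\overline{\mathbb{R}}$ is order-reversing (so $\alpha\geq\beta$ forces $\alpha^-\leq\beta^-$), I would apply $\cdot^-$ to both sides of the inequality above to get
\[
d(x,P_V(x)) = \delta(x,P_V(x))^- \leq \delta(x,v)^- = d(x,v), \qquad \forall v\in V ,
\]
which is the desired best-approximation inequality. This single step is the whole content of Remark~\ref{drema}: a point that is farthest in $\delta$ is nearest in $d$.

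Finally, to obtain the stated equality~(\ref{phx21}), I would take the infimum over $v\in V$ in the last display, yielding $d(x,P_V(x)) \leq \inf_{v\in V} d(x,v) = d(x,V)$ by~(\ref{D3}); conversely $P_V(x)\in V$ by its definition~(\ref{phx1}), so $d(x,P_V(x)) \geq d(x,V)$, and equality follows. I expect no real obstacle here, since all the work is already carried out in Theorem~\ref{th0b}; the only point demanding attention is the reversal of the inequality when the order-reversing opposite map is applied.
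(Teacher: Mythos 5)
Your proposal is correct and follows exactly the route the paper intends: the paper states this corollary with no separate proof (just \qed) because it is Theorem~\ref{th0b} combined with the conversion spelled out in Remark~\ref{drema}, namely applying the order-reversing opposite map $\lambda\mapsto\lambda^-$ to turn the farthest-point inequality for $\delta$ into the nearest-point inequality for $d$, and then using $P_V(x)\in V$ to upgrade the inequality to the equality~(\ref{phx21}).
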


\begin{remark}
\label{cpartic} For $X=\overline{\mathbb{R}}_{\max }^{n}$, Corollary~\ref
{cbaubum} has been given in~\cite{GK}, Theorem 1.
\end{remark}

We next establish some additional properties of the universal separating
complete half-space and apply them to reduce the problem of best
approximation by subsemimodules to best approximation by half-spaces.

\begin{theorem}
\label{th1} If $V$ is a complete subsemimodule of a complete semimodule $X$
over a complete idempotent semiring $\mathbb{S}$, if $x\notin V$, and if $K$
is the associated complete half-space separating $x$ and $V$ (see Theorem~%
\ref{th-univ}), then 
\begin{equation*}
P_{V}(x)=P_{K}(x).
\end{equation*}
\end{theorem}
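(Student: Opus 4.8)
The plan is to prove the two inequalities $P_V(x)\le P_K(x)$ and $P_K(x)\le P_V(x)$ separately, writing $p:=P_V(x)$ throughout. Recall that for any complete subsemimodule $W$ the canonical projection is $P_W(x)=\max\{w\in W\mid w\le x\}$ by~(\ref{phx1}); since $K$ is a complete half-space, hence a complete subsemimodule of $X$, the projection $P_K(x)$ is well defined, which is the only existence point requiring a word of care.

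For the first inequality I would simply exhibit $p$ as a competitor for $P_K(x)$. Indeed $p\in V$ by definition, and $V\subset K$ by Theorem~\ref{th-univ}, so $p\in K$; moreover $p\le x$ since $p=P_V(x)$. Thus $p$ lies in $\{h\in K\mid h\le x\}$, and therefore $p\le\max\{h\in K\mid h\le x\}=P_K(x)$.

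The crux is the reverse inequality, for which I would show that every $h\in K$ satisfying $h\le x$ already satisfies $h\le p$. Fix such an $h$ and use the residuation adjunction $\lambda\le h\backslash y\Leftrightarrow h\lambda\le y$ (the general-semimodule form of~(\ref{resid2}); see~\cite{BCOQ,CGQ}) evaluated at the unit scalar $\lambda=\mathbf{1}$: since $h=h\mathbf{1}$, the relation $h\le x$ is equivalent to $\mathbf{1}\le h\backslash x$. Membership $h\in K$ means precisely $h\backslash x\le h\backslash p$ by the definition~(\ref{bau}) of $K$ (and one may note, following Remark~\ref{rscrisa}, that because $p\le x$ the reverse $h\backslash p\le h\backslash x$ holds automatically, so this is in fact an equality). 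Combining, $\mathbf{1}\le h\backslash x\le h\backslash p$, and applying the adjunction once more with $\lambda=\mathbf{1}$ yields $h=h\mathbf{1}\le p$.

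Taking the supremum over all $h\in K$ with $h\le x$ gives $P_K(x)\le p$, and together with the first inequality I conclude $P_K(x)=p=P_V(x)$. I do not expect a genuine obstacle: the whole argument rests on the single adjunction property of residuation applied at $\lambda=\mathbf{1}$, together with the monotonicity of $h\mapsto h\backslash(\cdot)$ built into the definition of $K$; one sees in passing that $\{h\in K\mid h\le x\}=\{h\in K\mid h\le p\}$, which makes the equality of the two projections transparent.
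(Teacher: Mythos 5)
Your proof is correct and follows essentially the same route as the paper's: the inclusion $V\subset K$ gives $P_V(x)\le P_K(x)$, and for the reverse inequality the paper likewise takes $h\in K$ with $h\le x$, deduces $\mathbf{1}\le h\backslash x = h\backslash P_V(x)$ (the equality being the point you note via Remark~\ref{rscrisa}), and concludes $h\le P_V(x)$ by the same residuation adjunction at $\lambda=\mathbf{1}$.
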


\begin{proof}
Since $K\supset V$, we have $P_{K}(x)\geq P_{V}(x)$. If $h\in K$ is such
that $h\leq x$, we have $\mathbf{1}\leq h\backslash x=h\backslash {P_{V}(x)}$%
, where $\mathbf{1}$ is the neutral element of $\otimes $ in $\mathbb{S},$
and so, $h\leq P_{V}(x)$. Since this holds for all $h\in K$ such that $h\leq
x$, it follows that $P_{K}(x)\leq P_{V}(x)$.
\end{proof}

\begin{corollary}
\label{label2} If $V$ is a complete subsemimodule of a complete semimodule $X
$ over a complete idempotent semiring, if $x\in X,x\notin V,$ and if $K$ is
the associated complete half-space \textrm{(\ref{bau}) }separating $x$ and $V
$, then we have 
\begin{equation*}
d(x,V)=d(x,K)\enspace.
\end{equation*}
\end{corollary}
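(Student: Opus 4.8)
The plan is to combine Theorem~\ref{th1} with Corollary~\ref{cbaubum}, applying the latter to both the subsemimodule $V$ and the separating half-space $K$. The crucial preliminary observation is that $K$ is itself a complete subsemimodule of $X$ --- indeed, as noted after~(\ref{comphalfsp2}), every half-space $K_{u,v}$ is a complete subsemimodule --- so that the canonical projection $P_K(x)$ is well defined by~(\ref{phx1}) and Corollary~\ref{cbaubum} is legitimately applicable to $K$ exactly as it is to $V$.

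First I would invoke Corollary~\ref{cbaubum} for $V$: since $x\in X$, the point $P_V(x)$ is a best approximation of $x$ in $V$, so that $d(x,P_V(x))=d(x,V)$. Next I would invoke the same corollary for the complete subsemimodule $K$: the point $P_K(x)$ is a best approximation of $x$ in $K$, whence $d(x,P_K(x))=d(x,K)$.

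Finally, I would appeal to Theorem~\ref{th1}, which asserts $P_V(x)=P_K(x)$. Chaining the three equalities then gives
\[
d(x,V)=d(x,P_V(x))=d(x,P_K(x))=d(x,K),
\]
which is precisely the claimed identity.

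There is essentially no genuine obstacle: the corollary follows immediately from the coincidence of the two canonical projections established in Theorem~\ref{th1}, together with the general fact (Corollary~\ref{cbaubum}) that the canonical projection onto a complete subsemimodule realizes the distance. The only point that warrants a moment's care is verifying that Corollary~\ref{cbaubum} may indeed be applied to $K$, and this is guaranteed simply because $K$, being a complete half-space, is a complete subsemimodule of $X$.
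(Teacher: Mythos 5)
Your proof is correct and follows exactly the paper's own argument: apply Corollary~\ref{cbaubum} to both $V$ and $K$ (the latter being legitimate since every complete half-space is a complete subsemimodule), and chain the equalities via Theorem~\ref{th1}, giving $d(x,V)=d(x,P_{V}(x))=d(x,P_{K}(x))=d(x,K)$. Your explicit remark that $K$ is a complete subsemimodule is a welcome clarification of a point the paper leaves implicit.
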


\begin{proof}
Combining Corollary \ref{cbaubum} and Theorem \ref{th1}, we obtain 
\begin{equation*}
d(x,V)=d(x,P_{V}(x))=d(x,P_{K}(x))=d(x,K).\qed
\end{equation*}
\renewcommand{\qed}{}
\end{proof}

Finally, let us show the connection between the canonical projection and
orthogonality. The relation~(\ref{eorth}) can be thought of as an analogue
of the classical orthogonality relation $\langle h,x-P_{V}(x)\rangle =0,$
where $\langle .,.\rangle $ denotes the usual inner product, characterizing
the nearest point $P_{V}(x)$ of an element $x$ onto a linear subspace. We
next show that in the setting of semimodules, the canonical projection $%
P_{V}(x)$ is still characterized by the previous ``orthogonality'' property.

\begin{definition}
If $X$ is a complete idempotent semimodule, for $x,y,z\in X$ we shall say
that the ``bivector'' $(x,y)\in X^{2}$ is \emph{orthogonal to\ }$z,$\ and we
shall write $(x,y)\bot z,$\ if 
\begin{equation}
z\backslash x=z\backslash y.  \label{orthog}
\end{equation}
The bivector $(x,y)\in X^{2}$\ is said to be \emph{orthogonal to a subset}\ $%
M$\ of\ $X,$\ and we write $(x,y)\bot M,$ if $(x,y)\bot z\;$for all $z\in M.$
\end{definition}

In particular, if $X=\overline{\mathbb{R}}_{\max }^{n}$ and $%
x=(x_{1},\dots,x_{n})^{T},y=(y_{1},\dots,y_{n})^{T},z=(z_{1},%
\dots,z_{n})^{T}\in \overline{\mathbb{R}}_{\max }^{n},$ then by (\ref
{resid-usual}), the relation (\ref{orthog}) is equivalent to 
\begin{equation*}
\wedge _{i\in \lbrack n]}(x_{i}+^{\prime}(-z_{i}))=\wedge _{i\in \lbrack
n]}(y_{i}+^{\prime}(-z_{i})).
\end{equation*}
Theorem \ref{th-univ} shows that for any complete subsemimodule $V$ of $%
\overline{\mathbb{R}}_{\max }^{n}$ and any $x\notin V,$ the bivector $%
(x,P_{V}(x))$ is orthogonal to $V.$ Now we shall show that $P_{V}(x)$ is the
only element of $V$ with this property.

\begin{theorem}
Let $V$ be a complete subsemimodule of a complete semimodule $X$ over a
complete idempotent semiring, and let $x\in X,x\notin V.$ Then, $P_{V}(x)$
is the unique element $y$ of $V$ such that $(x,y)\bot V,$ i.e., such that 
\begin{equation}
v\backslash x=v\backslash y,\qquad \forall v\in V\enspace.  \label{eorth1}
\end{equation}
\end{theorem}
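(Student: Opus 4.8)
The plan is to treat existence and uniqueness separately, since the existence half is essentially already in hand. That the bivector $(x,P_{V}(x))$ is orthogonal to $V$ is exactly the content recorded just before the statement: Theorem~\ref{th-univ} gives $V\subset K$, hence $v\backslash x\leq v\backslash P_{V}(x)$ for every $v\in V$, while the monotonicity of residuation in its second argument together with $P_{V}(x)\leq x$ (Remark~\ref{rscrisa}) gives the reverse inequality $v\backslash P_{V}(x)\leq v\backslash x$. Thus $v\backslash x=v\backslash P_{V}(x)$ for all $v\in V$, i.e.\ $(x,P_{V}(x))\bot V$. The real content of the theorem is therefore the uniqueness assertion.

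For uniqueness, suppose $y\in V$ satisfies \eqref{eorth1}, that is, $v\backslash x=v\backslash y$ for all $v\in V$. The key tool is the defining residuation equivalence $\mathbf{1}\leq z\backslash w\Leftrightarrow z\leq w$ (the general Galois property behind \eqref{resid2}, applied with $\lambda =\mathbf{1}$), together with the trivial inequality $z\backslash z\geq \mathbf{1}$, which holds because $z\mathbf{1}=z\leq z$. First I would specialize \eqref{eorth1} to $v=y$, obtaining $y\backslash x=y\backslash y\geq \mathbf{1}$, whence $y\leq x$; since $y\in V$ and, by \eqref{phx1}, $P_{V}(x)$ is the greatest element of $V$ bounded above by $x$, this yields $y\leq P_{V}(x)$.

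For the reverse inequality I would specialize \eqref{eorth1} to $v=P_{V}(x)\in V$, obtaining $P_{V}(x)\backslash x=P_{V}(x)\backslash y$. Because $P_{V}(x)\leq x$, the residuation equivalence gives $\mathbf{1}\leq P_{V}(x)\backslash x=P_{V}(x)\backslash y$, and hence $P_{V}(x)\leq y$. Combining the two inequalities gives $y=P_{V}(x)$, as desired.

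There is no serious obstacle here: the whole argument rests on the elementary residuation facts above and on the two well-chosen specializations $v=y$ and $v=P_{V}(x)$, each of which converts the orthogonality identity into one of the two halves of the equality $y=P_{V}(x)$. The only point requiring slight care is to invoke the characterization of $P_{V}(x)$ as the largest element of $V$ below $x$ (definition~\eqref{phx1}) when passing from $y\leq x$ to $y\leq P_{V}(x)$; everything else is a direct application of monotonicity and the Galois correspondence underlying residuation.
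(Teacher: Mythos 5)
Your proof is correct, and its uniqueness argument takes a partly different route from the paper's. The existence half and the upper bound $y\leq P_{V}(x)$ coincide with the paper's proof: both rest on Theorem~\ref{th-univ} together with Remark~\ref{rscrisa}, and both obtain $y\leq x$ by specializing \eqref{eorth1} at $v=y$ and invoking $y\backslash y\geq \mathbf{1}$, then conclude via the maximality in \eqref{phx1}. The difference is in the lower bound $P_{V}(x)\leq y$: the paper argues for \emph{all} $v\in V$ that $y\geq v(v\backslash y)=v(v\backslash x)$ and then appeals to the explicit projector formula $P_{V}(x)=\sup_{v\in V}v(v\backslash x)$ from \cite{CGQ}, whereas you specialize \eqref{eorth1} at the single element $v=P_{V}(x)\in V$, note that $P_{V}(x)\leq x$ forces $\mathbf{1}\leq P_{V}(x)\backslash x=P_{V}(x)\backslash y$, and conclude $P_{V}(x)\leq y$ by the Galois property of residuation (the same property the paper itself uses in the proof of Theorem~\ref{th1}). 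Your version is more elementary and self-contained, since it never needs the supremum representation of the projector; the paper's version, at the cost of citing that formula, shows the slightly stronger fact that any $y$ satisfying \eqref{eorth1} dominates $v(v\backslash x)$ for every $v\in V$, which ties the uniqueness statement directly to the structure of $P_{V}$ as a supremum.
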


\begin{proof}
By Theorem~\ref{th-univ}, $y=P_{V}(x)$ satisfies the above relations. We
next show that $y$ is unique.

If $y$ satisfies~(\ref{eorth1}), then for all $v\in V$ we have $y\geq
v(v\backslash y)=v(v\backslash x)$, and so, $y\geq P_{V}(x)=\sup_{v\in
V}v(v\backslash x)$.

Moreover, taking $v=y$ in~(\ref{eorth1}), we get $y\backslash x=y\backslash
y\geq \mathbf{1}$, where $\mathbf{1}$ is the neutral element of $\mathbb{S}$
for $\otimes ,$ and so $x\geq y$. Since $P_{V}(x)$ is the maximal element of 
$V$ which is bounded above by $x$, it follows that $y\leq P_{V}(x)$. Hence $%
y=P_{V}(x)$.
\end{proof}

Let us pass now to $\mathbb{R}_{\max }^{n}.$ As mentioned at the beginning
of this section, when considering the \emph{b}-complete (but not complete)
semimodule $\mathbb{R}_{\max }^{n}$, instead of $\overline{\mathbb{R}}_{\max
}^{n}$, one is rather interested to take the \emph{closed half-spaces} of $%
\mathbb{R}_{\max }^{n}$ as tools for separation, which are defined as the
sets of the form 
\begin{eqnarray}
H^{\prime }=H_{a,b}^{\prime }&=&\{h\in \mathbb{R}_{\max }^{n}\mid ah\geq bh\}
\notag \\
&=&\{h\in \mathbb{R}_{\max }^{n}\mid \max_{i\in \lbrack n]}(a_{i}+h_{i})\geq
\max_{i\in \lbrack n]}(b_{i}+h_{i})\}\enspace,  \label{clohalfsp}
\end{eqnarray}
where $a=(a_{1},\dots,a_{n}),b=(b_{1},\dots,b_{n})\in (\mathbb{R}_{\max
}^{n})^{\ast }$ are row vectors with coordinates in $\mathbb{R}_{\max }.$ We
will call $H^{\prime }$ \emph{the universal closed half-space of }$\mathbb{R}%
_{\max }^{n}$ \emph{separating }$x$\emph{\ from} $V$. The term ``closed''
refers to the usual topology of $\mathbb{R}_{\max }^{n}$, since the set $%
H^{\prime }$ of (\ref{clohalfsp}) with $a,b\in (\mathbb{R}_{\max
}^{n})^{\ast }$ is always closed in $\mathbb{R}_{\max }^{n}$ (by \cite{CGQS}%
, Proposition 3.7). A particular case which will be important in the sequel
is that when $H_{a,b}^{\prime }$ \emph{has finite apex} (we recall that the
number $-(a\oplus b)$ is called \cite{joswig04} the \emph{apex} of $%
H_{a,b}^{\prime })$.

Any closed half-space $H_{a,b}^{\prime }$ of $\mathbb{R}_{\max }^{n}$ is the
trace over $\mathbb{R}_{\max }^{n}$ of a complete half-space of $\overline{%
\mathbb{R}}_{\max }^{n}$ (but not vice versa). Indeed, taking $X=\overline{%
\mathbb{R}}_{\max }^{n}$ thought of as a complete $\overline{\mathbb{R}}%
_{\max }$-semimodule, and taking $u=-a^{T}$ and $v=-b^{T}$, where $a,b\in (%
\mathbb{R}_{\max }^{n})^{\ast },$ by Remark~\ref{rbasic} we obtain 
\begin{equation*}
K_{u,v}\cap \mathbb{R}_{\max }^{n}=H_{a,b}\cap \mathbb{R}_{\max
}^{n}=H_{a,b}^{\prime }\enspace.
\end{equation*}

For \emph{b}-complete subsemimodules of $\mathbb{R}_{\max }^{n}$ we obtain
the following results:

\begin{corollary}
\label{coromindist} If $V$ is a \emph{b}-complete subsemimodule of $\mathbb{R%
}_{\max }^{n}$, and if $x\in \mathbb{R}_{\max }^{n}$, then 
\begin{equation*}
d(x,P_{V}(x))\leq d(x,v),\qquad \forall v\in V\enspace,
\end{equation*}
i.e., $P_{V}(x)$ is a best approximation of $x$ in $V$ for Hilbert's
projective distance in $\mathbb{R}_{\max }^{n}.$
\end{corollary}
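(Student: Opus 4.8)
The plan is to reduce this \emph{b}-complete statement in $\mathbb{R}_{\max}^{n}$ to the already established complete statement, Corollary~\ref{cbaubum}, which gives exactly the same inequality for complete subsemimodules of a complete semimodule. I would take $X=\overline{\mathbb{R}}_{\max}^{n}$, which is a complete semimodule, and replace $V$ by its completion inside $X$, taking care that the canonical projection does not change in the process.

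First I would introduce $\overline{V}:=\{\sup S\mid S\subseteq V\}$, the set of all suprema (taken in $\overline{\mathbb{R}}_{\max}^{n}$) of subsets of $V$, and check that it is a complete subsemimodule of $\overline{\mathbb{R}}_{\max}^{n}$ containing $V$. Containment is immediate from singletons, and closure under arbitrary suprema follows from $\sup_{\alpha}(\sup S_{\alpha})=\sup\bigcup_{\alpha}S_{\alpha}$; closure under $\oplus$ is a special case. Closure under scalar multiplication follows from the distributivity of multiplication over suprema in a complete semimodule, giving $(\sup S)\lambda=\sup(S\lambda)$ with $S\lambda\subseteq V$ for $\lambda\in\mathbb{R}_{\max}$, and for $\lambda=+\infty$ one uses that $v(+\infty)=\sup\{v\mu\mid\mu\in\mathbb{R}\}\in\overline{V}$ for each $v\in V$.

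The key step, and the one I expect to be the main obstacle, is the identity $P_{V}(x)=P_{\overline{V}}(x)$ for $x\in\mathbb{R}_{\max}^{n}$, both projections being defined by~(\ref{phx1}). Here $P_{V}(x)$ is well defined because $\{v\in V\mid v\le x\}$ is bounded above by $x\in\mathbb{R}_{\max}^{n}$ and $V$ is \emph{b}-complete, while $P_{\overline{V}}(x)$ is well defined by completeness of $\overline{V}$. The inclusion $V\subseteq\overline{V}$ gives $P_{V}(x)\le P_{\overline{V}}(x)$ at once; for the reverse I would take any $w\in\overline{V}$ with $w\le x$, write $w=\sup S$ with $S\subseteq V$, observe that each $v\in S$ satisfies $v\le w\le x$ and hence $v\le P_{V}(x)$, and conclude $w=\sup S\le P_{V}(x)$. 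Taking the supremum over all such $w$ yields $P_{\overline{V}}(x)\le P_{V}(x)$, so the two projections coincide; the crux is precisely that every element of $\overline{V}$ is a supremum of elements of $V$.

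Finally I would apply Corollary~\ref{cbaubum} to the complete subsemimodule $\overline{V}$ of $X=\overline{\mathbb{R}}_{\max}^{n}$, obtaining $d(x,P_{\overline{V}}(x))\le d(x,w)$ for all $w\in\overline{V}$. Specializing to $w=v\in V\subseteq\overline{V}$ and using $P_{\overline{V}}(x)=P_{V}(x)$ then yields $d(x,P_{V}(x))\le d(x,v)$ for every $v\in V$, which is the desired assertion.
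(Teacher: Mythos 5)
Your proof is correct and follows essentially the same route as the paper: the paper also forms the completion $\hat{V}$ of $V$ in $\overline{\mathbb{R}}_{\max}^{n}$ (the set of suprema of subsets of $V$), notes that $P_{\hat{V}}(x)=P_{V}(x)$, and invokes Corollary~\ref{cbaubum}. The only difference is that you spell out the verification that the completion is a complete subsemimodule and that the two projections coincide, steps the paper dismisses as "readily seen."
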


\begin{proof}
Since $V$ is a \emph{b}-complete subsemimodule of $\mathbb{R}_{\max }^{n}$,
it has a completion $\hat{V}$ in $\overline{\mathbb{R}}_{\max }^{n}$, which
consists of the suprema of arbitrary subsets of $V$. The latter is a
complete subsemimodule of $\overline{\mathbb{R}}_{\max }^{n}$. It is readily
seen that $P_{\hat{V}}(x)=P_{V}(x)$, so the result follows from Corollary 
\ref{cbaubum}.
\end{proof}

\begin{corollary}
\label{coromindist2} If $V$ is a \emph{b}-complete subsemimodule of $\mathbb{%
R}_{\max }^{n}$, if $x\in \mathbb{R}_{\max }^{n},x\not\in V$, and if 
\begin{eqnarray}
H^{\prime }&=&\{h\in \mathbb{R}_{\max }^{n}\mid h\backslash x\leq h\backslash {%
P_{V}(x)}\}  \label{cefru} \\
&=&\{h\in \mathbb{R}_{\max }^{n}|\max_{j\in \lbrack n]}(h_{j}-x_{j})\geq
\max_{j\in \lbrack n]}(h_{j}-P_{V}(x)_{j})\},  \notag
\end{eqnarray}
then 
\begin{equation*}
P_{V}(x)=P_{H^{\prime }}(x).
\end{equation*}
\end{corollary}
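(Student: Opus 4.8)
The plan is to follow the completion strategy already used in the proof of Corollary~\ref{coromindist}, and then transfer Theorem~\ref{th1} from $\overline{\mathbb{R}}_{\max}^n$ down to $\mathbb{R}_{\max}^n$. First I would record that the two descriptions of $H'$ in the statement agree. By (\ref{resid-usual}) together with (\ref{sim}) one has, for any $h,y\in\mathbb{R}_{\max}^n$, the identity $-(h\backslash y)=\max_{j\in[n]}(h_j-y_j)$; applying this with $y=x$ and with $y=P_V(x)$ and taking opposites turns the condition $h\backslash x\le h\backslash P_V(x)$ into $\max_{j\in[n]}(h_j-x_j)\ge\max_{j\in[n]}(h_j-P_V(x)_j)$, so the two sets in the statement coincide.

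Next I would introduce the completion $\hat V$ of $V$ in $\overline{\mathbb{R}}_{\max}^n$, that is, the set of suprema of arbitrary subsets of $V$, which is a complete subsemimodule of $\overline{\mathbb{R}}_{\max}^n$ satisfying $P_{\hat V}(x)=P_V(x)$, exactly as in the proof of Corollary~\ref{coromindist}. To invoke the separation results I must check that $x\notin\hat V$: if $x=\sup S$ for some $S\subseteq V$, then since $x\in\mathbb{R}_{\max}^n$ the set $S$ is bounded above in $\mathbb{R}_{\max}^n$, so the $b$-completeness of $V$ forces $\sup S\in V$, i.e.\ $x\in V$, contradicting $x\notin V$. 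Let $K$ denote the universal complete half-space of $\overline{\mathbb{R}}_{\max}^n$ separating $x$ from $\hat V$, namely $K=\{h\in\overline{\mathbb{R}}_{\max}^n\mid h\backslash x\le h\backslash P_{\hat V}(x)\}$ (Theorem~\ref{th-univ}); since $P_{\hat V}(x)=P_V(x)$, this is exactly the set whose trace on $\mathbb{R}_{\max}^n$ is $H'$, so that $H'=K\cap\mathbb{R}_{\max}^n$.

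Now Theorem~\ref{th1}, applied to the complete subsemimodule $\hat V$ and its associated half-space $K$, yields $P_K(x)=P_{\hat V}(x)=P_V(x)$. The remaining step is to argue that $P_K(x)$ coincides with $P_{H'}(x)$. Since $P_V(x)\in V\subseteq\mathbb{R}_{\max}^n$, the element $P_K(x)=P_V(x)$ lies in $\mathbb{R}_{\max}^n$; moreover $P_K(x)\in K$ and $P_K(x)\le x$, so in fact $P_K(x)\in K\cap\mathbb{R}_{\max}^n=H'$ with $P_K(x)\le x$. Because $H'$, being the trace of the complete half-space $K$ over $\mathbb{R}_{\max}^n$, is a $b$-complete subsemimodule of $\mathbb{R}_{\max}^n$, the projection $P_{H'}(x)=\max\{h\in H'\mid h\le x\}$ is well defined. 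From $H'\subseteq K$ we get $P_{H'}(x)\le P_K(x)$, whereas $P_K(x)\in\{h\in H'\mid h\le x\}$ gives $P_K(x)\le P_{H'}(x)$; hence $P_{H'}(x)=P_K(x)=P_V(x)$, as claimed.

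I expect the main difficulty here to be bookkeeping rather than anything deep: the genuine content is carried by Theorem~\ref{th1}, and what remains is to make the passage to the completion legitimate (verifying $x\notin\hat V$ and $P_{\hat V}(x)=P_V(x)$) and to check that intersecting the complete half-space $K$ with $\mathbb{R}_{\max}^n$ does not move the projection. The latter is the one place to be careful, and it works precisely because $P_V(x)$ is finite, so that $P_K(x)$, being the greatest point of $K$ below $x$, already belongs to $H'$ and is therefore also the greatest point of $H'$ below $x$.
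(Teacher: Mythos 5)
Your proof is correct and takes essentially the same route as the paper: the paper's own proof is a one-line reference saying to argue as for Corollary~\ref{coromindist} --- pass to the completion $\hat{V}$ of $V$ in $\overline{\mathbb{R}}_{\max}^{n}$, for which $P_{\hat{V}}(x)=P_{V}(x)$ --- but now invoking Theorem~\ref{th1} in place of Corollary~\ref{cbaubum}. The extra details you supply (the equivalence of the two descriptions of $H'$, the verification that $x\notin\hat{V}$, and the check that intersecting the complete half-space $K$ with $\mathbb{R}_{\max}^{n}$ does not move the projection, which hinges on $P_{V}(x)\in\mathbb{R}_{\max}^{n}$) are precisely the steps the paper leaves implicit, and you carry them out correctly.
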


Here, and in the sequel, for $a,b\in \overline{\mathbb{R}}_{\max}$, we set 
\begin{equation*}
a-b:=a+(-b) \enspace .
\end{equation*}

\begin{proof}
This follows similarly to Corollary \ref{coromindist}, using now Theorem \ref
{th1}.\ 
\end{proof}

\begin{corollary}
\label{chop}If $V$ is a \emph{b}-complete subsemimodule of $\mathbb{R}_{\max
}^{n}$, and if $x\in \mathbb{R}_{\max }^{n},x\not\in V$, then 
\begin{equation*}
d(x,V)=d(x,H^{\prime })\enspace,
\end{equation*}
with $H^{\prime }$ of (\ref{cefru}).
\end{corollary}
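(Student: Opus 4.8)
The plan is to chain together the two corollaries just established, exactly as Corollary~\ref{label2} was obtained from Corollary~\ref{cbaubum} and Theorem~\ref{th1} in the complete setting. Concretely, I would write
\[
d(x,V)=d(x,P_{V}(x))=d(x,P_{H^{\prime }}(x))=d(x,H^{\prime })\enspace,
\]
where the first equality uses that $P_{V}(x)$ is a best approximation of $x$ in $V$ (Corollary~\ref{coromindist}), the middle equality is the identity $P_{V}(x)=P_{H^{\prime }}(x)$ of Corollary~\ref{coromindist2}, and the last equality uses that $P_{H^{\prime }}(x)$ is a best approximation of $x$ in $H^{\prime }$.

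The only step that is not immediate is this last equality: to invoke Corollary~\ref{coromindist} with $H^{\prime }$ in place of $V$, I first need to know that $H^{\prime }$ is itself a \emph{b}-complete subsemimodule of $\mathbb{R}_{\max }^{n}$. This is where I expect the real (if modest) work to lie. I would argue as follows. By the discussion preceding the corollary, $H^{\prime }=H_{a,b}^{\prime }=K_{u,v}\cap \mathbb{R}_{\max }^{n}$ with $u=-a^{T}$, $v=-b^{T}$, and $K_{u,v}$ is a complete half-space, hence a complete subsemimodule of $\overline{\mathbb{R}}_{\max }^{n}$. Let $M\subset H^{\prime }$ be bounded above by some $w\in \mathbb{R}_{\max }^{n}$, and let $m$ be its entrywise supremum. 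Since $m\leq w$ and $w$ has all entries $<+\infty $, we get $m\in \mathbb{R}_{\max }^{n}$; and since $K_{u,v}$ is complete, the supremum $m$ (which is the same whether computed in $\overline{\mathbb{R}}_{\max }^{n}$ or in $\mathbb{R}_{\max }^{n}$, because $M$ is bounded above in $\mathbb{R}_{\max }^{n}$) lies in $K_{u,v}$. Thus $m\in K_{u,v}\cap \mathbb{R}_{\max }^{n}=H^{\prime }$, which shows that $H^{\prime }$ is \emph{b}-complete.

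With $H^{\prime }$ known to be \emph{b}-complete, Corollary~\ref{coromindist} applies verbatim to it and yields $d(x,P_{H^{\prime }}(x))=d(x,H^{\prime })$, completing the displayed chain. Let me note that the hypothesis $x\notin H^{\prime }$ is not needed for this last equality, since Corollary~\ref{coromindist} holds for every $x\in \mathbb{R}_{\max }^{n}$; the assumption $x\notin V$ enters only through Corollary~\ref{coromindist2}, where $H^{\prime }$ is defined in terms of $P_{V}(x)$. So the main obstacle is purely the verification of \emph{b}-completeness of $H^{\prime }$, and once that is in hand the statement is a two-line consequence of the preceding corollaries.
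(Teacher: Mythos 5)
Your proposal is correct and follows essentially the same route as the paper, whose proof is precisely the chain $d(x,V)=d(x,P_{V}(x))=d(x,P_{H^{\prime}}(x))=d(x,H^{\prime})$ obtained by applying Corollaries~\ref{coromindist} and~\ref{coromindist2}. The \emph{b}-completeness of $H^{\prime}$ that you verify (via its being the trace on $\mathbb{R}_{\max}^{n}$ of a complete half-space) is exactly the detail the paper leaves implicit when it invokes Corollary~\ref{coromindist} for the last equality, so your write-up is a faithful, slightly more explicit version of the paper's argument.
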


\begin{proof}
Applying Corollaries~\ref{coromindist} and~\ref{coromindist2}, we get 
\begin{equation*}
d(x,V)=d(x,P_{V}(x))=d(x,P_{H^{\prime }}(x))=d(x,H^{\prime }).\qed
\end{equation*}
\renewcommand{\qed}{}
\end{proof}

\section{The canonical projection onto, and the distance to, a closed
half-space of $\mathbb{R}_{\max }^{n}$}\label{s05}

In the next result we shall give an explicit formula for the canonical
projection onto a closed half-space of $\mathbb{R}_{\max }^{n}.$ To this
end, the following notation will be useful: If $b\in (\mathbb{R}_{\max
}^{n})^{\ast }$ is a row vector and $\lambda \in \mathbb{R}_{\max }$ is a
scalar, we set 
\begin{equation}
b\backslash {\lambda :}=\sup \{u\in \mathbb{R}_{\max }^{n}\mid bu\leq
\lambda \}\in \overline{\mathbb{R}}_{\max }^{n},  \label{bepele}
\end{equation}
$u$ being thought of as a column vector. So $b\backslash {\lambda }$ is a
column vector with entries 
\begin{eqnarray}
(b\backslash {\lambda )}_{j} &\!\!\!=\!\!\!&(\sup \{u\in \mathbb{R}_{\max }^{n}\mid
bu\leq \lambda \})_{j}  \notag \\
&\!\!\!=\!\!\!&\sup \{u\in \mathbb{R}_{\max }^{n}\mid b_{j}u\leq \lambda
\}=b_{j}\backslash {\lambda =}\left\{ 
\begin{array}{l}
(b_{j})^{-1}\lambda \quad \text{if }j\in \func{Supp}\,b \\ 
+\infty \text{\quad if }j\notin \func{Supp}\,b.
\end{array}
\right. \quad \quad  \label{bepel}
\end{eqnarray}
\textbf{\ }

\begin{theorem}
\label{prop1} Let $a,b\in (\mathbb{R}_{\max }^{n})^{\ast }$ be row vectors
and consider the closed half-space 
\begin{equation}
H=\{h\in \mathbb{R}_{\max }^{n}\mid ah\geq bh\}\enspace.  \label{clohalfsp2}
\end{equation}
Let $I=\func{Supp}\,a$, $J=\func{Supp}\,b,$ and assume $I\cap J=\emptyset $
and that $J\neq \emptyset $ $(b\neq -\infty )$. Then for any $x\in \mathbb{R}%
_{\max }^{n}$ we have 
\begin{equation}
P_{H}(x)=x\wedge ({b}\backslash {ax})\enspace,  \label{e0}
\end{equation}
i.e., 
\begin{eqnarray}
\!\!\!\!\!\!\!\!\!\!\!\!(P_{H}(x))_{j}&\!\!\!=\!\!\!&x_{j}\wedge ({b_{j}}\backslash {ax})  \notag \\
&\!\!\!=\!\!\!&
\begin{cases}
x_{j} & \text{for }j\in J^{c}, \\ 
x_{j}\wedge \Big(b_{j}^{-1}(ax)\Big)=x_{j}\wedge \left( b_{j}^{-1}\Big(%
\bigoplus_{i\in I}a_{i}x_{i}\Big)\right)  & \text{for }j\in J\enspace,
\end{cases}
\label{e1}
\end{eqnarray}
where $J^{c}$ denotes the complement of $J$ in $[n]$.
\end{theorem}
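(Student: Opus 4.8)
The plan is to verify directly that $w:=x\wedge(b\backslash ax)$ satisfies the two defining properties of the canonical projection in~(\ref{phx1}): that $w\in H$ with $w\le x$, and that $w$ dominates every element of $H$ lying below $x$. Together these show that $w=\max\{v\in H\mid v\le x\}=P_{H}(x)$, and in particular that the maximum in~(\ref{phx1}) is attained. Throughout I use the explicit entries of $w$ recorded in~(\ref{e1}), so that $w_j=x_j$ for $j\in J^{c}$ and $w_j=x_j\wedge\big(b_j^{-1}(ax)\big)$ for $j\in J$.

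The inequality $w\le x$ is immediate. To check $w\in H$, i.e.\ $aw\ge bw$, I first compute $aw$. Since $a_i=-\infty$ for $i\notin I$, we have $aw=\bigoplus_{i\in I}(a_i w_i)$; but the hypothesis $I\cap J=\emptyset$ forces every $i\in I$ to lie in $J^{c}$, where $w_i=x_i$ by~(\ref{e1}). Hence $aw=\bigoplus_{i\in I}(a_i x_i)=ax$. On the other hand $bw=\bigoplus_{j\in J}(b_j w_j)$, and for each $j\in J$ the bound $w_j\le b_j^{-1}(ax)$ gives $b_j w_j\le ax$; taking the supremum over $j\in J$ yields $bw\le ax=aw$, so indeed $w\in H$.

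It remains to show $w$ is an upper bound for $\{v\in H\mid v\le x\}$. Let $v\in H$ satisfy $v\le x$. For coordinates $j\in J^{c}$ we simply have $v_j\le x_j=w_j$. For $j\in J$ I claim in addition that $v_j\le b_j^{-1}(ax)$, which combined with $v_j\le x_j$ gives $v_j\le x_j\wedge b_j^{-1}(ax)=w_j$. Indeed, from $v\le x$ and monotonicity of $h\mapsto ah$ we get $av\le ax$, while $v\in H$ gives $bv\le av$; therefore $b_j v_j\le bv\le av\le ax$ for every $j\in J$, that is, $v_j\le b_j^{-1}(ax)$. This proves $v\le w$, and hence $w=P_{H}(x)$.

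The only delicate point is the identity $aw=ax$ in the membership verification, and this is exactly where the disjointness assumption $I\cap J=\emptyset$ enters: without it, an index $i\in I\cap J$ could have $w_i=x_i\wedge b_i^{-1}(ax)$ strictly below $x_i$, so $aw$ could drop below $ax$ and the clean argument $bw\le ax=aw$ would fail. The assumption $J\neq\emptyset$ guarantees that $b\neq-\infty$, so that the residual $b\backslash ax$ and the second branch of~(\ref{e1}) are as written and $H$ is a genuine half-space.
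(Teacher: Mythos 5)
Your proof is correct and follows essentially the same route as the paper's: both verify that $x\wedge(b\backslash ax)$ lies in $H$ below $x$ (using $I\cap J=\emptyset$ to get $a$-product of the candidate equal to $ax$, hence dominating the $b$-product) and that any $h\in H$ with $h\le x$ satisfies $bh\le ah\le ax$ and is therefore bounded by $b\backslash ax$. The only difference is cosmetic — you argue entrywise and in the opposite order (membership first, maximality second), while the paper invokes the residuation identity $b(b\backslash ax)\le ax$ directly.
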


\begin{proof}
We set 
\begin{equation*}
u:=x\wedge (b\backslash ax)
\end{equation*}
and first observe that the coordinates of $u$ coincide with the right hand
side of~(\ref{e1}).

Assume that $h\in H$ is such that $x\geq h$. Then, $ax\geq ah\geq bh$, and
so, $h\leq \sup \{u^{\prime }\in \mathbb{R}_{\max }^{n}\mid bu^{\prime }\leq
ax\}=b\backslash {ax}$. It follows that $h\leq x\wedge (b\backslash ax)=u$.
This implies that $P_{H}(x)=\sup \{h\in H\mathbf{|}h\leq x\}\leq u$. To show
that the equality holds, it remains to check that $au\geq bu$. We have 
\begin{equation*}
bu=b(x\wedge (b\backslash ax))\leq b(b\backslash {ax})=b\sup \{u\in \mathbb{R%
}_{\max }^{n}|bu\leq ax\}\leq ax=\bigoplus_{i\in I}a_{i}x_{i}.
\end{equation*}
But by $I\cap J=\emptyset $ we have $I\subseteq J^{c},$ whence by (\ref{e1}%
), $a_{i}x_{i}=a_{i}x_{i}(b_{i}\backslash ax)=a_{i}u_{i}\;(i\in I),$ and
therefore $\bigoplus_{i\in I}a_{i}x_{i}=\bigoplus_{i\in I}a_{i}u_{i}=au.$
Thus, finally, $bu\leq au.$
\end{proof}

The following result gives the main formula for the distance to a closed
half-space:

\begin{theorem}
\label{th1new}Let $a,b\in (\mathbb{R}_{\max }^{n})^{\ast }$ be row vectors, $%
H$ the closed half-space (\ref{clohalfsp2}), and $x\not\in H$. Then 
\begin{equation}
d(x,H)={ax}\backslash {bx}=
\begin{cases}
(ax)^{-1}bx & \text{ if }ax\neq -\infty \enspace, \\ 
+\infty  & \text{ if }ax=-\infty .
\end{cases}
\label{dhalfsp}
\end{equation}
\end{theorem}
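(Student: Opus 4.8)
The plan is to reduce the computation of $d(x,H)$ to that of $d(x,P_H(x))$ and then to evaluate the latter explicitly from the projection formula of Theorem~\ref{prop1}. First I would record that the closed half-space $H$ is a \emph{b}-complete subsemimodule of $\mathbb{R}_{\max}^n$: it is the trace on $\mathbb{R}_{\max}^n$ of the complete half-space $K_{-a^T,-b^T}$ of $\overline{\mathbb{R}}_{\max}^n$ (Remark~\ref{rbasic}), and the supremum in $\mathbb{R}_{\max}^n$ of any subset bounded above by an element of $H$ still lies in $K_{-a^T,-b^T}$ and has finite entries where it is bounded, hence lies in $H$. Corollary~\ref{coromindist} then applies and gives that $P_H(x)$ is a best approximation of $x$ in $H$, so $d(x,H)=d(x,P_H(x))$. (If one prefers to avoid this verification, the inequality $d(x,H)\le d(x,P_H(x))$ is immediate from $P_H(x)\in H$, and the matching lower bound can be obtained from the self-contained argument sketched at the end.)

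Next I would substitute the explicit projection $p:=P_H(x)=x\wedge(b\backslash ax)\le x$ from Theorem~\ref{prop1}, whose coordinates are given by~(\ref{e1}), into the definition $d(x,p)=((x\backslash p)\otimes(p\backslash x))^-$. Write $S:=\func{Supp}\,x$. When $ax\neq-\infty$, every coordinate $p_j$ with $j\in J\cap S$ remains finite, so $\func{Supp}\,p=S$; a direct evaluation using~(\ref{e1}) then gives $x\backslash p=\min_{j\in S}(p_j-x_j)=\min(0,\,ax-bx)=ax-bx$, the last equality because $x\notin H$ forces $ax<bx$. Simultaneously $p\backslash x=\min_{j\in S}(x_j-p_j)=0$, since the coordinates $j\in I\subseteq J^c$ satisfy $p_j=x_j$ and $I\cap S\neq\emptyset$ (as $ax\neq-\infty$). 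Taking the opposite yields $d(x,p)=-(ax-bx)=(ax)^{-1}bx$, the first case of~(\ref{dhalfsp}).

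For the degenerate case $ax=-\infty$ I would argue by supports. Here $ax=-\infty$ forces $I\cap S=\emptyset$, and $x\notin H$ gives $ax<bx$, i.e.\ $bx>-\infty$, so $J\cap S\neq\emptyset$. Formula~(\ref{e1}) then sets $p_j=-\infty$ for every $j\in J$, whence $\func{Supp}\,p=S\setminus J\subsetneq S$, and by the support characterization~(\ref{cemasa}) we get $d(x,P_H(x))=+\infty$, which coincides with $ax\backslash bx=(-\infty)\backslash bx=+\infty$ read off the residuation table. The same value follows intrinsically from Theorem~\ref{reduc}: no $h\in H$ can share the support $S$, for such an $h$ would give $ah=-\infty$ (since $I\cap S=\emptyset$), hence $bh=-\infty$, contradicting $J\cap S\neq\emptyset$.

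I expect the main obstacle to be the careful bookkeeping of $\pm\infty$ in the two residuations $x\backslash p$ and $p\backslash x$, and in particular the recognition that $ax=-\infty$ is exactly the situation in which the projection deletes a nonempty block of the support, pushing $x$ and every point of $H$ into different parts. If a proof independent of the best-approximation machinery were wanted, the only genuinely new ingredient is the lower bound: for $h\in H$ sharing the support $S$ of $x$, set $t_k:=x_k-h_k$, $T:=\max_{k\in S}t_k$, $t:=\min_{k\in S}t_k$; then on $I\cap S$ and $J\cap S$ one has the chain $ax-t\ge ah\ge bh\ge bx-T$, so $d(x,h)=T-t\ge bx-ax$ by~(\ref{D1p}), matching the upper bound from $P_H(x)$.
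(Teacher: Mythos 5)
Your proof is correct, and while it shares the paper's skeleton---reduce $d(x,H)$ to $d(x,P_H(x))$ and substitute the projection formula of Theorem~\ref{prop1}---the computational core is genuinely different. The paper stays at the algebraic level: it invokes Theorem~\ref{th0b} to write $\delta(x,H)=(P_H(x)\backslash x)\otimes(x\backslash P_H(x))$, shows $P_H(x)\backslash x=0$ by a scaling argument (if $P_H(x)\lambda\le x$ with $\lambda>0$, then $P_H(x)\lambda$ would be an element of $H$ below $x$ exceeding $P_H(x)$), and then evaluates $x\backslash P_H(x)$ via the residuation identities $x\backslash\bigl(x\wedge(b\backslash ax)\bigr)=(x\backslash x)\wedge\bigl(x\backslash(b\backslash ax)\bigr)=(x\backslash x)\wedge(bx\backslash ax)$, splitting into the cases $P_H(x)\neq-\infty$ and $P_H(x)=-\infty$. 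You instead compute both residuations coordinate-wise through supports and split on $ax\neq-\infty$ versus $ax=-\infty$; these two dichotomies are not the same (one can have $ax=-\infty$ with $P_H(x)\neq-\infty$, e.g.\ $a=(0,-\infty,-\infty)$, $b=(-\infty,0,-\infty)$, $x=(-\infty,0,0)^T$), but each covers all cases, and your support bookkeeping checks out. What your route buys is concreteness---no Galois-connection calculus---and, in your final paragraph, a fully elementary proof of the lower bound (the chain $ax-t\ge ah\ge bh\ge bx-T$ for $h\in H$ with $\func{Supp}\,h=\func{Supp}\,x$, combined with formula~(\ref{D1p})) that bypasses the anti-distance machinery of Theorem~\ref{th0b} entirely; the paper has no counterpart of this. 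Two caveats, neither of which is a gap relative to the paper: both proofs silently inherit the standing hypotheses of Theorem~\ref{prop1} ($\func{Supp}\,a\cap\func{Supp}\,b=\emptyset$ and $b\neq-\infty$), which the statement of Theorem~\ref{th1new} omits but without which the formula can fail (for $n=1$, $a=(0)$, $b=(1)$ one gets $H=\{-\infty\}$, so $d(x,H)=+\infty$ while $ax\backslash bx=1$); and your verification that $H$ is \emph{b}-complete, so that Corollary~\ref{coromindist} applies, is sketched loosely, though correctly---the paper is in fact looser still, applying Theorem~\ref{th0b} to $H$ directly.
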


\begin{proof}
Since $P_{H}(x)$ maximizes the opposite of Hilbert's distance to $x$ among
the points of $H$, see~Theorem \ref{th0b}, we have 
\begin{equation}
\delta (x,H)=\delta (x,P_{H}(x))=({P_{H}(x)}\backslash {x})({x}\backslash {%
P_{H}(x)})\enspace.  \label{maxi}
\end{equation}

Assume first that $P_{H}(x)\neq -\infty $. We claim that in that case 
we have ${P_{H}(x)}\backslash {x}=0$. Indeed, since $P_{H}(x)\leq x$, we
must have ${\lambda }:={P_{H}(x)}\backslash {x}\geq 0$. Assume by
contradiction that ${\lambda }>0$. Then since $P_{H}(x)\neq -\infty $, $%
P_{H}(x){\lambda }>P_{H}(x)0=P_{H}(x)$. Since $H$ is a max-plus linear
subspace, and since $P_{H}(x)\in H$, we have $P_{H}(x){\lambda }\in H$, but
since by the definition of ${\lambda }$, $P_{H}(x){\lambda }\leq x$, this
contradicts the definition of $P_{H}(x)$ as the maximal element $h\in H$
such that $h\leq x$.

Then, using successively Equations~(\ref{maxi}),~(\ref{e0}), and residuation
properties of max-plus linear maps (see~\cite{CGQ}), we get 
\begin{eqnarray}
\delta (x,H)&=&({P_{H}(x)}\backslash {x})({x}\backslash {P_{H}(x)})={x}%
\backslash {P_{H}(x)}  \notag \\
&=&{x}\backslash {(x\wedge ({b}\backslash {ax}))}=({x}\backslash {x})\wedge ({x%
}\backslash {({b}\backslash {ax})})  \notag \\
&=&({x}\backslash {x})\wedge ({bx}\backslash {ax})\enspace.  \label{expr}
\end{eqnarray}
Since $x\not\in H$, we have $x\neq -\infty $ (because $-\infty \in \{h\in 
\mathbb{R}_{\max }^{n}\mid ah\geq bh\}=H),$ so ${x}\backslash {x}=0$. Also,
again since $x\notin H,$ we have $bx>ax$. Hence $bx\neq -\infty $ and ${bx}%
\backslash {ax}<0$, so (\ref{expr}) simplifies to 
\begin{equation*}
\delta (x,H)=0\wedge \left( {bx}\backslash {ax}\right) ={bx}\backslash {ax}%
=(bx)^{-1}ax\enspace.
\end{equation*}
Consequently, by (\ref{projdist2}), we arrive at 
\begin{equation*}
d(x,H)=(\delta (x,H))^{-1}={ax}\backslash {bx}\enspace.
\end{equation*}
Assume now that $P_{H}(x)=-\infty $. Then since $x\not\in H$, so $x\neq
-\infty $, we have, using (\ref{phx21}), that $d(x,H)=d(x,P_{H}(x))=d(x,-%
\infty )=+\infty $. Moreover, by~(\ref{e1}), we get that $%
x_{i}=P_{H}(x)_{i}=-\infty $ for all $i\not\in J$, so that in particular $%
a_{i}x_{i}=-\infty \;(i\notin J)$. Hence by the definition of $J$ and since $%
bx>ax$, we get that ${ax}\backslash {bx}=\sup \{\lambda \in \mathbb{R}_{\max
}|\lambda ax\leq bx\}=+\infty =d(x,H)$.
\end{proof}

\section{The canonical forms of closed half-spaces of $\mathbb{R}_{\max}^{n} $}
\label{s06}

We have the following result, which shows that every closed half-space (\ref
{clohalfsp}) of $\mathbb{R}_{\max }^{n}$ admits a canonical representation
with the aid of coefficients with disjoint supports:

\begin{proposition}
\label{prop0} Let $a,b\in (\mathbb{R}_{\max }^{n})^{\ast }\backslash
\{-\infty \}$ be row vectors such that $a\not\geq b$ and there exists $i\in
\lbrack n]$ such that $a_{i}\geq b_{i}$, and consider the closed half space 
\begin{equation}
H=\{h\in \mathbb{R}_{\max }^{n}|ah\geq bh\}\enspace  \label{totaia}
\end{equation}
(the assumptions on the coefficients $a$ and $b$ are equivalent to $%
\{-\infty \}\neq H\neq \mathbb{R}_{\max }^{n}$). Let $a^{\prime }$ and $%
b^{\prime }\in (\mathbb{R}_{\max }^{n})^{\ast }$ be the truncations of $a$
and $b$ defined by 
\begin{equation}
a_{i}^{\prime }=
\begin{cases}
a_{i} & \text{if }a_{i}\geq b_{i} \\ 
-\infty  & \text{if }a_{i}<b_{i},
\end{cases}
\;\quad b_{j}^{\prime }=
\begin{cases}
b_{j} & \text{if }a_{j}<b_{j} \\ 
-\infty  & \text{if }a_{j}\geq b_{j}.
\end{cases}
\label{trunc0}
\end{equation}
Then $\func{Supp}\,a^{\prime }\cap $ $\func{Supp}\,b^{\prime }=\emptyset ,$
and $H$ can be written in the form: 
\begin{equation}
H=\{h\in \mathbb{R}_{\max }^{n}\mid a^{\prime }h\geq b^{\prime }h\}\enspace.
\label{half-reduc}
\end{equation}
\end{proposition}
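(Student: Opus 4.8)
The plan is to split $[n]$ according to the sign of $a_i - b_i$ and to read off both assertions from this splitting. I would set $I_+ := \{i \in [n] \mid a_i \geq b_i\}$ and $I_- := \{i \in [n] \mid a_i < b_i\}$, so that $[n] = I_+ \sqcup I_-$; the hypotheses $a \not\geq b$ and ``$a_i \geq b_i$ for some $i$'' say precisely that both $I_-$ and $I_+$ are nonempty. By the very definition~(\ref{trunc0}), $a'$ agrees with $a$ on $I_+$ and is $-\infty$ on $I_-$, while $b'$ agrees with $b$ on $I_-$ and is $-\infty$ on $I_+$. Hence $\func{Supp}\,a' \subseteq I_+$ and $\func{Supp}\,b' \subseteq I_-$, which gives the disjointness $\func{Supp}\,a' \cap \func{Supp}\,b' = \emptyset$ immediately.

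For the representation~(\ref{half-reduc}) the first step is to note that for every $h \in \mathbb{R}_{\max }^n$ the truncated forms reduce to partial maxima, the deleted coefficients contributing only $-\infty$:
\begin{equation*}
a'h = \max_{i \in I_+}(a_i + h_i), \qquad b'h = \max_{i \in I_-}(b_i + h_i).
\end{equation*}
Writing $ah = \max(a'h, \alpha_-)$ and $bh = \max(\beta_+, b'h)$ with $\alpha_- := \max_{i \in I_-}(a_i + h_i)$ and $\beta_+ := \max_{i \in I_+}(b_i + h_i)$, the goal becomes the equivalence $ah \geq bh \iff a'h \geq b'h$, i.e.\ $h \in H$ iff $a'h \geq b'h$.

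I would then establish two elementary facts. First, $a'h \geq \beta_+$: on $I_+$ one has $a_i \geq b_i$, hence $a_i + h_i \geq b_i + h_i$ termwise, and taking the maximum over $I_+$ yields the inequality. This gives the easy implication: if $a'h \geq b'h$, then $a'h \geq \max(\beta_+, b'h) = bh$, so $ah \geq a'h \geq bh$ and $h \in H$. The reverse implication is where care is needed, and I would argue by contraposition: assume $a'h < b'h$ and show $ah < bh$. The key observation is a \emph{strict} domination on $I_-$: if $b'h > -\infty$, then $b'h$ is attained at some $i_0 \in I_-$ with $h_{i_0} \in \mathbb{R}$, and for every $i \in I_-$ one has $a_i + h_i < b'h$ — indeed $a_i + h_i = -\infty < b'h$ when $h_i = -\infty$, while $a_i < b_i$ forces $a_i + h_i < b_i + h_i \leq b'h$ when $h_i \in \mathbb{R}$. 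Since $I_-$ is finite the maximum is attained, so $\alpha_- < b'h$. Now $a'h < b'h$ already forces $b'h > -\infty$ (as $a'h \geq -\infty$), so combining $a'h < b'h$ with $\alpha_- < b'h$ gives $ah = \max(a'h, \alpha_-) < b'h \leq bh$, as required.

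The only genuinely delicate point is this strict-inequality argument on $I_-$, together with the bookkeeping of $-\infty$ entries (using $-\infty + t = -\infty$ in $\mathbb{R}_{\max }$ and the attainment of maxima over finite index sets); everything else is the termwise comparison on $I_+$. Once both implications are in place, (\ref{half-reduc}) follows. The parenthetical claim that the hypotheses are equivalent to $\{-\infty\} \neq H \neq \mathbb{R}_{\max }^n$ I would then verify by testing $H$ on the standard basis vectors $e_j$ (value $0$ in coordinate $j$, $-\infty$ elsewhere): $H = \mathbb{R}_{\max }^n$ iff $a_j \geq b_j$ for all $j$, i.e.\ $I_- = \emptyset$, while $H = \{-\infty\}$ iff no coordinate satisfies $a_j \geq b_j$, i.e.\ $I_+ = \emptyset$, the nontrivial direction of the latter using the same finite-max strict-domination argument as above.
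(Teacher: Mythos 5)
Your proof is correct and takes essentially the same route as the paper's: the same splitting of $[n]$ into $\{i\mid a_i\geq b_i\}$ and $\{i\mid a_i<b_i\}$, the easy inclusion via termwise domination $a'h\geq \beta_+$ (the paper phrases this as $b\leq a'\oplus b'$), and the hard inclusion via the strict domination of $\max_{i\in I_-}(a_i+h_i)$ by $b'h$, which you run as a contrapositive while the paper runs it as a two-case analysis on whether $\func{Supp}\,h$ meets $I_-$. The only genuine addition is your verification of the parenthetical claim $\{-\infty\}\neq H\neq \mathbb{R}_{\max}^{n}$ via the unit vectors $e_j$, which the paper asserts without proof.
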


\begin{proof}
Let us denote 
\begin{equation*}
J:=\{j\in \lbrack n]\mid a_{j}<b_{j}\},\;J^{c}:=\{j\in \lbrack n]|a_{j}\geq
b_{j}\},
\end{equation*}
so that 
\begin{equation}
a_{i}^{\prime }=
\begin{cases}
a_{i} & \text{for }i\in J^{c} \\ 
-\infty  & \text{otherwise,}
\end{cases}
\;\quad b_{j}^{\prime }=
\begin{cases}
b_{j} & \text{for }j\in J \\ 
-\infty  & \text{otherwise.}
\end{cases}
\label{trunc}
\end{equation}
Thus, $\func{Supp}\,(a^{\prime })\subseteq J^{c}$ and $\func{Supp}%
\,(b^{\prime })\subseteq J,$ whence $\func{Supp}\,(a^{\prime })\cap \func{%
Supp}\,(b^{\prime })=\emptyset .$

Furthermore, let $H^{\prime }$ be the right hand side of~(\ref{half-reduc}),
and let us show that $H=H^{\prime }$. The elements $a^{\prime }$ and $%
b^{\prime }$ satisfy $a^{\prime }\leq a$ and $b^{\prime }\leq b$ and since $%
a_{i}^{\prime }=a_{i}\geq b_{i}$ for $i\in J^{c}$, and $b_{i}^{\prime }=b_{i}
$ for $i\in J$, we deduce that $b\leq a^{\prime }\oplus b^{\prime }$.

Let $h\in H^{\prime }$, then $b^{\prime }h\leq a^{\prime }h$. Hence $bh\leq
(a^{\prime }\oplus b^{\prime })h\leq a^{\prime }h\leq ah,$ so $h\in H,$
which shows the inclusion $H^{\prime }\subseteq H.$

Conversely, let $h\in H$, then $bh\leq ah$. Since $b^{\prime }\leq b$, this
implies that $b^{\prime }h\leq ah$. Let $a^{\prime \prime }$ be the
truncation of $a$ to $J$, then $a=a^{\prime }\oplus a^{\prime \prime }$ and
thus 
\begin{equation}
b^{\prime }h\leq a^{\prime }h\oplus a^{\prime \prime }h\enspace.
\label{bpapas}
\end{equation}
If the support of $h$ does not intersect $J$, then $a^{\prime \prime
}h=-\infty $ and~(\ref{bpapas}) implies that $b^{\prime }h\leq a^{\prime }h$%
, that is $h\in H^{\prime }$. Otherwise, since $a_{i}<b_{i}$ for all $i\in J$%
, we deduce that $a^{\prime \prime }h<b^{\prime }h$, hence by~(\ref{bpapas}%
), it follows that the maximum of $a^{\prime }h$ and $a^{\prime \prime }h$
which is greater or equal to $a^{\prime }h$, is necessarily equal to $%
a^{\prime }h$. Hence again $b^{\prime }h\leq a^{\prime }h$, and thus $h\in
H^{\prime }$. We have shown the converse inclusion $H\subseteq H^{\prime }$,
hence the equality.
\end{proof}

\begin{corollary}
\label{th1new2} Let $a,b$ and $H$ be as in Proposition~\ref{prop0} and
assume that $x\not\in H$. Then, 
\begin{equation*}
d(x,H)={a^{\prime }x}\backslash {bx}\enspace,
\end{equation*}
where $a^{\prime }$ is defined as in Proposition~\ref{prop0}.
\end{corollary}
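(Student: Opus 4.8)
The plan is to reduce to the disjoint-support case already settled in Theorem~\ref{th1new}. First I would invoke Proposition~\ref{prop0}, whose hypotheses coincide with those assumed here, to rewrite $H=\{h\in \mathbb{R}_{\max}^{n}\mid a^{\prime}h\geq b^{\prime}h\}$, where $a^{\prime},b^{\prime}$ are the truncations of $a,b$ and satisfy $\func{Supp}\,a^{\prime}\cap \func{Supp}\,b^{\prime}=\emptyset$. Since $a\not\geq b$ there is an index $j$ with $a_{j}<b_{j}$, and for such $j$ one has $b_{j}>a_{j}\geq-\infty$, so $b_{j}\in\mathbb{R}$ and $b^{\prime}\neq-\infty$. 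Thus the pair $(a^{\prime},b^{\prime})$ meets the standing hypotheses of Theorem~\ref{th1new} (disjoint supports and $b^{\prime}\neq-\infty$), and applying that theorem to $(a^{\prime},b^{\prime})$ gives $d(x,H)=a^{\prime}x\backslash b^{\prime}x$.

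It then remains to show $a^{\prime}x\backslash b^{\prime}x=a^{\prime}x\backslash bx$, i.e.\ that the original $bx$ may replace $b^{\prime}x$. Writing $J=\{j\mid a_{j}<b_{j}\}$ and $J^{c}$ its complement, we have $b^{\prime}_{j}=b_{j}$ for $j\in J$ and $b^{\prime}_{j}=-\infty$ otherwise, so $b^{\prime}x=\max_{j\in J}(b_{j}+x_{j})$ and $bx=\max\big(b^{\prime}x,\ \max_{j\in J^{c}}(b_{j}+x_{j})\big)$. For $j\in J^{c}$ one has $a^{\prime}_{j}=a_{j}\geq b_{j}$, whence $b_{j}+x_{j}\leq a^{\prime}_{j}+x_{j}\leq a^{\prime}x$; thus every term omitted from $b^{\prime}x$ is bounded above by $a^{\prime}x$.

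Finally I would split into two cases on $a^{\prime}x\in\mathbb{R}_{\max}$. If $a^{\prime}x=-\infty$, then by~(\ref{resid-pinf}) both $a^{\prime}x\backslash b^{\prime}x$ and $a^{\prime}x\backslash bx$ equal $+\infty$, so they agree. If $a^{\prime}x\in\mathbb{R}$, then $x\notin H=\{h\mid a^{\prime}h\geq b^{\prime}h\}$ forces $a^{\prime}x<b^{\prime}x$; combined with the bound above, each $J^{c}$-term satisfies $b_{j}+x_{j}\leq a^{\prime}x<b^{\prime}x$ and is therefore dominated, giving $bx=b^{\prime}x$. In either case $a^{\prime}x\backslash b^{\prime}x=a^{\prime}x\backslash bx$, so $d(x,H)=a^{\prime}x\backslash bx$, as claimed. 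The substantive step is this last domination: the hypothesis $x\notin H$ is exactly what guarantees that the truncated part of $bx$ stays below $a^{\prime}x$. The only points needing care are the edge case $a^{\prime}x=-\infty$ and the preliminary verification that $b^{\prime}\neq-\infty$, which ensures Theorem~\ref{th1new} genuinely applies to $(a^{\prime},b^{\prime})$.
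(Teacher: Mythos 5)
Your proposal is correct and follows essentially the same route as the paper's proof: apply Theorem~\ref{th1new} to the canonical pair $(a',b')$ furnished by Proposition~\ref{prop0}, then use the truncation inequality $b_j\leq a_j$ on $J^c$ together with $x\notin H$ to see that the terms dropped in passing from $bx$ to $b'x$ are dominated, so the two residuations coincide. The only (harmless) differences are cosmetic: the paper dominates the $J^c$-terms by $ax<bx$ and concludes $bx=b'x$ in one stroke, whereas you dominate them by $a'x$ and add a separate case for $a'x=-\infty$, which in fact also yields $bx=b'x$ and is therefore not needed.
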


\begin{proof}
Using Theorem~\ref{th1new} for the coefficients $a^{\prime },b^{\prime }$
defined in Proposition~\ref{prop0}, we get that $d(x,H)={a^{\prime }x}%
\backslash {b^{\prime }x}$. Since $x\not\in H$, we have $ax<bx$. Let $J$ and 
$b^{\prime }$ be defined as in Proposition~\ref{prop0}. Since $b_{i}\leq
a_{i}$ when $i\in J^{c}$, we deduce that $bx=b^{\prime }x$, which shows the
corollary.
\end{proof}

\begin{definition}
\label{dcan}We shall call (\ref{half-reduc}) the \emph{canonical form} of
the closed half-space $H.$
\end{definition}

For the computation of distances to, and elements of best approximation by,
subsemimodules, it is worthwhile to write explicitly the canonical form of
the universal separating closed half-space (\ref{cefru}) for a pair $(V,x),$
where $V$ is a \emph{b}-complete subsemimodule of $\mathbb{R}_{\max }^{n}$
and $x\notin V:$

\begin{corollary}
\label{cuniv}If $V$ is a \emph{b}-complete subsemimodule of $\mathbb{R}%
_{\max }^{n}$ and $x\in \mathbb{R}_{\max }^{n},x\notin V$ is such that all
coordinates of $P_{V}(x)$ (and hence also of $x$) are $>-\infty ,$ then the
following closed half-space separates $x$ from $V:$ 
\begin{eqnarray}
H_{V,x}^{\prime }&=&\{h\in \mathbb{R}_{\max }^{n}\mid
\max_{j|x_{j}=P_{V}(x)_{j}}(h_{j}-x_{j})\geq
\max_{j|x_{j}>P_{V}(x)_{j}}(h_{j}-(P_{V}(x)_{j}))\}  \notag \\
&=&\{h\in \mathbb{R}_{\max }^{n}|\wedge _{j\in J}h_{j}\backslash x_{j}\leq
\wedge _{j\in J^{c}}h_{j}\backslash P_{V}(x)_{j}\},  \label{kefe}
\end{eqnarray}
where 
\begin{equation}
J=\{j\in \lbrack n]\mid x_{j}=P_{V}(x)_{j}\},\;J^{c}=\{j\in \lbrack
n]|x_{j}>P_{V}(x)_{j}\}.  \label{au}
\end{equation}
\end{corollary}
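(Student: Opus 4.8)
The plan is to recognize (\ref{kefe}) as the canonical form, in the sense of Definition~\ref{dcan}, of the universal separating closed half-space $H'$ of (\ref{cefru}), and then to invoke Proposition~\ref{prop0}. First I would set $a:=-x^{T}$ and $b:=-P_{V}(x)^{T}$, which are genuine row vectors in $(\mathbb{R}_{\max}^{n})^{\ast}\setminus\{-\infty\}$ since by hypothesis all coordinates of $x$ and of $P_{V}(x)$ are finite. Using (\ref{resid-usual}) exactly as in the computation of Remark~\ref{rbasic}, the second description of $H'$ in (\ref{cefru}) reads $H'=\{h\mid ah\geq bh\}=H'_{a,b}$. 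That this $H'$ separates $x$ from $V$ is already available from the earlier results: applying Theorem~\ref{th-univ} to the completion of $V$ in $\overline{\mathbb{R}}_{\max}^{n}$ (whose projection agrees with $P_{V}(x)$, as in the proof of Corollary~\ref{coromindist}) yields $V\subseteq K$ and $x\notin K$ for the associated complete half-space $K$, with $H'=K\cap\mathbb{R}_{\max}^{n}$. Equivalently, $V\subseteq H'$ follows from the orthogonality $v\backslash x=v\backslash P_{V}(x)$ for $v\in V$, while $x\notin H'$ follows because $x\geq P_{V}(x)$ with $x\neq P_{V}(x)$ forces $\max_{j}(x_{j}-P_{V}(x)_{j})>0=\max_{j}(x_{j}-x_{j})$. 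So the only real task is to identify the canonical form of $H'_{a,b}$.

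Next I would verify the hypotheses of Proposition~\ref{prop0} for this pair $a,b$. Since $P_{V}(x)\leq x$ always holds, we have $a_{j}\geq b_{j}\Leftrightarrow x_{j}\leq P_{V}(x)_{j}\Leftrightarrow x_{j}=P_{V}(x)_{j}$, i.e.\ $j\in J$ in the notation (\ref{au}), while $a_{j}<b_{j}\Leftrightarrow x_{j}>P_{V}(x)_{j}$, i.e.\ $j\in J^{c}$. As $x\notin V$ but $P_{V}(x)\in V$, we have $x\neq P_{V}(x)$, so $J^{c}\neq\emptyset$ and hence $a\not\geq b$. The delicate point is to produce an index with $a_{i}\geq b_{i}$, i.e.\ to show $J\neq\emptyset$; this is exactly where the finiteness assumption on $P_{V}(x)$ enters. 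I would argue by contradiction: if $x_{i}>P_{V}(x)_{i}$ for every $i$, then $\varepsilon:=\min_{i}(x_{i}-P_{V}(x)_{i})>0$, and since all coordinates of $P_{V}(x)$ are finite the vector $P_{V}(x)\varepsilon\in V$ satisfies $P_{V}(x)\varepsilon\leq x$ and $P_{V}(x)\varepsilon>P_{V}(x)$, contradicting that $P_{V}(x)$ is the greatest element of $V$ bounded above by $x$. Thus $J\neq\emptyset$ and both nondegeneracy conditions of Proposition~\ref{prop0} hold.

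It then remains to read off the truncations. By (\ref{trunc0}), the support of $a'$ lands in $\{j\mid a_{j}\geq b_{j}\}=J$ with $a'_{j}=-x_{j}$ there, and the support of $b'$ lands in $\{j\mid a_{j}<b_{j}\}=J^{c}$ with $b'_{j}=-P_{V}(x)_{j}$ there (note that the index sets of Proposition~\ref{prop0} are interchanged relative to (\ref{au}), because the roles of $a$ and $b$ are swapped). Hence $a'h=\max_{j\in J}(h_{j}-x_{j})$ and $b'h=\max_{j\in J^{c}}(h_{j}-P_{V}(x)_{j})$, and Proposition~\ref{prop0} gives $H'=\{h\mid a'h\geq b'h\}$, which is precisely the first description in (\ref{kefe}). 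Finally, using $h_{j}\backslash x_{j}=x_{j}+^{\prime}(-h_{j})$ from (\ref{resid-scalar-usual}) together with (\ref{sim}) turns $\max_{j\in J}(h_{j}-x_{j})=-\wedge_{j\in J}(h_{j}\backslash x_{j})$, so the inequality $a'h\geq b'h$ is equivalent to $\wedge_{j\in J}h_{j}\backslash x_{j}\leq\wedge_{j\in J^{c}}h_{j}\backslash P_{V}(x)_{j}$, giving the second description in (\ref{kefe}).

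I expect the main obstacle to be the nonemptiness of $J$, that is, the existence of a coordinate where $x$ and $P_{V}(x)$ agree: this is the one step that genuinely uses the hypothesis that $P_{V}(x)$ has only finite entries, and without it Proposition~\ref{prop0} would fail to apply. The remaining work---the translation between the two residuation-based descriptions and the bookkeeping of the interchanged index sets $J$ and $J^{c}$---is routine once the correspondence $a=-x^{T}$, $b=-P_{V}(x)^{T}$ is fixed.
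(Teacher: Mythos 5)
Your proof is correct and takes essentially the same route as the paper: set $a=-x^{T}$, $b=-P_{V}(x)^{T}$, identify the universal separating half-space~(\ref{cefru}) with $\{h\in\mathbb{R}_{\max}^{n}\mid ah\geq bh\}$, and apply Proposition~\ref{prop0}, reading off the truncations $a'$, $b'$ to obtain~(\ref{kefe}). The only difference is that you explicitly verify the nondegeneracy hypotheses of Proposition~\ref{prop0} inside the proof --- in particular $J\neq\emptyset$, via the scalar-shift argument $P_{V}(x)\varepsilon\leq x$ that genuinely uses the finiteness of $P_{V}(x)$ --- whereas the paper's proof applies the proposition directly and relegates these points to Remark~\ref{rassns}, with a less direct argument.
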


\begin{proof}
This follows from Proposition \ref{prop0}, setting 
\begin{equation*}
a_{j}=-x_{j},\;b_{j}=-P_{V}(x)_{j}\quad \quad (j\in \lbrack n]).
\end{equation*}
Indeed, then $a_{j}\geq b_{j}\Leftrightarrow -x_{j}\geq
P_{V}(x)_{j}\Leftrightarrow x_{j}=P_{V}(x)_{j}$ (where the last equivalence
holds by $P_{V}(x)\leq x)$ and $a_{j}<b_{j}\Leftrightarrow
-x_{j}<-P_{V}(x)_{j}\Leftrightarrow x_{j}>P_{V}(x)_{j},$ whence by (\ref
{trunc0}), 
\begin{equation}
a_{j}^{\prime }=\left\{ 
\begin{array}{l}
-x_{j}\text{\quad if }x_{j}=P_{V}(x)_{j} \\ 
-\infty \quad \text{if }x_{j}>P_{V}(x)_{j},
\end{array}
\right. \quad b_{j}^{\prime }=
\begin{cases}
-P_{V}(x)_{j} & \text{if }x_{j}>P_{V}(x)_{j} \\ 
-\infty  & \text{if }x_{j}=P_{V}(x)_{j}.
\end{cases}
\label{au3}
\end{equation}

Consequently, $a^{\prime }h=\max_{j|x_{j}=P_{V}(x)_{j}}(h_{j}-x_{j})$ and $%
b^{\prime }h=\max_{j|x_{j}>P_{V}(x)_{j}}(h_{j}-P_{V}(x)_{j}\},$ whence by (%
\ref{half-reduc}) we obtain 
\begin{eqnarray*}
H&=&\{h\in \mathbb{R}_{\max }^{n}|a^{\prime }h\geq b^{\prime }h\} \\
&=&\{h\in \mathbb{R}_{\max }^{n}\mid
\max_{j|x_{j}=P_{V}(x)_{j}}(h_{j}-x_{j})\geq
\max_{j|x_{j}>P_{V}(x)_{j}}(h_{j}-(P_{V}(x)_{j}))\} \\
&=&H_{V,x}^{\prime }.\qed
\end{eqnarray*}
\renewcommand{\qed}{}
\end{proof}

\begin{remark}
\label{rassns}\emph{a) }In the above, since $x\notin V,$ we have $J^{c}\neq
\emptyset .$ Furthermore, we also have $J\neq \emptyset ,$ since otherwise $%
P_{V}(x)_{j}<x_{j}\;(j\in \lbrack n]),$ whence by (\ref{kefe}) we would
obtain $H_{V,x}^{\prime }=\emptyset .$

Note also that the coefficients $-x_{j}$ and $-P_{V}(x)_{j}$ in the
canonical form (\ref{kefe}) of $H_{V,x}^{\prime }$ depend on $V$ and $x,$
while the coefficients $a_{j}^{\prime },b_{j}^{\prime }$ in the canonical
form (\ref{half-reduc}) of (\ref{totaia}) don't.

\emph{b)} The assumption alone that all coordinates of $x$ are $>-\infty $
does not imply that each element $v$ of $V$ has all coordinates $>-\infty ,$
as shown e.g.\ by the subsemimodule $V=\{(-\infty ,v_{2})|v_{2}\in \mathbb{R}%
\}$ of $\mathbb{R}_{\max }^{2}.$

\emph{c)} Corollary \ref{cuniv} is a more precise form of \cite{GK}, Theorem
3.
\end{remark}

By (\ref{au}), (\ref{au3}) and the assumption that all $P_{V}(x)_{j}$ are $%
>-\infty ,$ we have $\func{Supp}\,(a^{\prime })=J$ and $\func{Supp}%
\,(b^{\prime })=J^{c},$ and hence in the situation of Corollary \ref{cuniv}
we always have 
\begin{equation}
\text{Supp}\,(a^{\prime })\cup \text{Supp}\,(b^{\prime })=J\cup J^{c}=[n].
\label{full}
\end{equation}

\begin{definition}
\label{dfull}We shall call the sets $H^{\prime }$ of the form (\ref
{half-reduc}) satisfying $\func{Supp}\,a^{\prime }\cap \func{Supp}%
\,b^{\prime }=\emptyset $ and (\ref{full}), \emph{half-spaces with finite
apex}.

Note that the sets of this form are exactly the ``tropical half-spaces''
studied in \cite{joswig04}, where the apex of the half-space (\ref
{half-reduc}) is defined as the vector$\,-(a^{\prime }\oplus b^{\prime }).$
\end{definition}

\begin{remark}
\label{rdiffer}In classical linear analysis, one first reduces the problem
of best approximation of elements $x$ by linear subspaces $V$ to the case of
suitable separating support half-spaces $H=H_{V,x}$ by showing for them the
equality of distances $d(x,V)=d(x,H)$ and the equality of elements of best
approximation in $V$ and $H$, then one solves the problems of best
approximation for general half-spaces $H,$ and this gives solutions also for
the problems of best approximation by the linear subspaces $V$. In the case
of best approximation of $x$ by elements of subsemimodules $V$ of $\mathbb{R}%
_{\max }^{n}$ such that all coordinates of $P_{V}(x)$ (and hence also of $x)$
are $>-\infty ,$ in order to apply such a method one needs to use closed
half-spaces with finite apex, as shown by Corollary \ref{cuniv}.
\end{remark}

The following immediate consequence of Corollary \ref{cuniv} shows that the 
\emph{sectors} of $H^{\prime }$, as defined in~\cite{joswig04} are readily
obtained from the previous representation, and that the apex of $H^{\prime}$
is precisely $P_V(x)$.

\begin{corollary}
Let $x,V$ and $H^{\prime }$ be as in Corollary~\ref{cuniv}. Then, the apex
of the half-space $H^{\prime }$ is $P_{V}(x)$, and $H^{\prime }$ is the
union of the sectors 
\begin{equation*}
H_{i}^{\prime }:=\{x\in \mathbb{R}_{\max }^{n}\mid h_{i}-(P_{V}(x))_{i}\geq
\max_{j\in \lbrack n]\setminus \{i\}}(h_{j}-(P_{V}(x))_{j})\}\quad \forall
i\in I\enspace.
\end{equation*}
\qed
\end{corollary}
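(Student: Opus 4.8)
The plan is to extract everything directly from the canonical form of $H'$ already established in the proof of Corollary~\ref{cuniv}, and then to identify the resulting set with a union of sectors by an elementary argmax argument. Write $g:=P_V(x)$, which is finite in all coordinates by hypothesis, and recall the sets
\[
J=\{j\in[n]\mid x_j=g_j\},\qquad J^c=\{j\in[n]\mid x_j>g_j\}
\]
from~(\ref{au}). By~(\ref{au3}) the canonical form of $H'=H_{V,x}^{\prime}$ is $\{h\mid a'h\geq b'h\}$ with $a'_j=-x_j=-g_j$ for $j\in J$ and $a'_j=-\infty$ otherwise, and $b'_j=-g_j$ for $j\in J^c$ and $b'_j=-\infty$ otherwise; in particular $\func{Supp}\,a'=J$ plays the role of the index set $I$ of Theorem~\ref{prop1}, so that $I=J$.

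First I would compute the apex $-(a'\oplus b')$ coordinatewise. Since $\func{Supp}\,a'=J$ and $\func{Supp}\,b'=J^c$ are complementary, which is exactly property~(\ref{full}), one has $(a'\oplus b')_j=a'_j=-g_j$ for $j\in J$ and $(a'\oplus b')_j=b'_j=-g_j$ for $j\in J^c$; hence $-(a'\oplus b')_j=g_j$ in every coordinate, and the apex equals $g=P_V(x)$. This is the first assertion.

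Next I would rewrite membership in $H'$ in terms of $g$. Since $a'h=\max_{j\in J}(h_j-g_j)$ and $b'h=\max_{k\in J^c}(h_k-g_k)$ are both well defined (each $g_j$ is finite), we have
\[
H'=\{h\in\mathbb{R}_{\max}^n\mid \max_{j\in J}(h_j-g_j)\geq \max_{k\in J^c}(h_k-g_k)\}.
\]
The key step is the double inclusion $H'=\bigcup_{i\in J}H_i'$. For $\supseteq$, if $h\in H_i'$ with $i\in J$, then $h_i-g_i\geq h_k-g_k$ for every $k$, in particular for $k\in J^c$, whence $\max_{j\in J}(h_j-g_j)\geq h_i-g_i\geq\max_{k\in J^c}(h_k-g_k)$ and $h\in H'$. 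For $\subseteq$, given $h\in H'$, choose $i\in J$ attaining $\max_{j\in J}(h_j-g_j)$, which is possible because $J\neq\emptyset$ by Remark~\ref{rassns}a); then $h_i-g_i$ dominates $\max_{j\in J}(h_j-g_j)$ and, by the defining inequality of $H'$, also $\max_{k\in J^c}(h_k-g_k)$, so $h_i-g_i\geq h_j-g_j$ for all $j\in[n]$, i.e.\ $h\in H_i'$.

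The only delicate points, and the nearest thing to an obstacle, are the bookkeeping ones on which the $\subseteq$ inclusion rests: that the maximum over $J$ is actually attained, which uses $J\neq\emptyset$ from Remark~\ref{rassns}a), and that every difference $h_j-g_j$ is unambiguous, which uses the finiteness of $P_V(x)$ assumed in Corollary~\ref{cuniv}. Together these guarantee that the supremum defining $H'$ genuinely selects a dominating coordinate lying in $J$, which is precisely what turns the half-space into the union of its $J$-sectors.
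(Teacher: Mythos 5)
Your proof is correct, and it is precisely the elaboration the paper intends: the paper states this corollary as an ``immediate consequence'' of Corollary~\ref{cuniv} with no written proof, the content being exactly what you carry out --- reading the apex $-(a'\oplus b')=P_V(x)$ off the canonical form~(\ref{au3})--(\ref{full}) and splitting the inequality $\max_{j\in J}(h_j-g_j)\geq\max_{k\in J^c}(h_k-g_k)$ into sectors by choosing an attaining index in $J$. Your explicit identification of the index set $I$ with $J=\func{Supp}\,a'$ (resolving the paper's notational slip) and your appeal to $J\neq\emptyset$ from Remark~\ref{rassns}a) and to the finiteness of $P_V(x)$ are exactly the points that make the ``immediate'' argument rigorous.
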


In the above the term ``closed half-space'' was introduced because of the
analogy with the classical closed half-spaces $\{x\in \mathbb{R}^{n}|\Phi
(x)\leq c\}$ of $\mathbb{R}^{n}$, where $\Phi \in (\mathbb{R}^{n})^{\ast
},c\in \mathbb{R}.$ However, note that there is an important difference
between the two cases. Namely, in the classical case of $\mathbb{R}^{n}$,
given a linear subspace $V$ of $\mathbb{R}^{n}$ and a point $x\notin V,$
there exists a separating closed half-space $H=H_{V,x}$ of $\mathbb{R}^{n}$
(i.e.\ such that $V\subseteq H,x\notin H),$ with the additional property $%
d(x,V)=d(x,H),$ but for any other separating closed half-space $H^{\prime
}\neq H\;(V\subset H^{\prime },x\notin H^{\prime })$ we must have $H^{\prime
}\subset H$ (strictly) and hence $d(x,H^{\prime })<d(x,H),$ because bd $%
H^{\prime }$ must be parallel to bd $H$ (these facts are well known and easy
to prove). However, this fact is no longer true in the case of closed
half-paces $H=V,\,H^{\prime }$ and outside points $x\notin H^{\prime }$ in $%
\mathbb{R}_{\max }^{n},$ as shown by Example \ref{evax} below, in which $%
H^{\prime }\supset H,\,\,H^{\prime }\neq H,\;d(x,H^{\prime })=d(x,H)$:

\begin{example}
\label{evax} Let 
\begin{eqnarray*}
H&=&V:=\{v\in \mathbb{R}_{\max }^{3}\mid v_{2}\geq v_{1}\} \\
&=&\{v\in \mathbb{R}_{\max }^{3}|(-\infty )v_{1}\oplus 0v_{2}\oplus (-\infty
)v_{3}\geq 0v_{1}\oplus (-\infty )v_{2}\oplus (-\infty )v_{3}\},\; \\
x&:=&(2,1,0)^{T}\notin V.
\end{eqnarray*}
Then $V$ is a subsemimodule (actually a half-space, but not with finite
apex), and 
\begin{eqnarray*}
P_{V}(x)&=&\max \{v\in V|(v_{1},v_{2},v_{3})^{T}\leq (2,1,0)^{T}\}=(1,1,0)^{T},
\\
J&=&\{j|x_{j}=P_{V}(x)_{j}\}=\{2,3\},J^{c}=\{j|x_{j}>P_{V}(x)_{j}\}=\{1\},
\end{eqnarray*}
so $J\cup J^{c}=[3]$, and hence the universal separating closed half-space $%
H^{\prime }$ of (\ref{kefe}) has finite apex; in fact, 
\begin{eqnarray*}
H^{\prime }&=&H_{V,x}^{\prime }=\{h\in \mathbb{R}_{\max }^{3}|\max
(-x_{2}+h_{2},-x_{3}+h_{3})\geq -P_{V}(x)_{1}+h_{1}\} \\
&=&\{h|\max (-1+h_{2},0+h_{3})\geq -1+h_{1}\}=\{h|\max (h_{2},h_{3}+1)\geq
h_{1}\}.
\end{eqnarray*}
Furthermore, we have $d(x,V)=d(x,H^{\prime })$ and $H\subset H^{\prime }$
(strictly). This is illustrated in Figure~\ref{fig0a}, in which every
max-plus line through the origin (i.e.\ the set of multiples of a vector of $%
\mathbb{R}_{\max }^{3}$) is represented by its intersection point with a
hyperplane orthogonal to the main diagonal.
\end{example}

\begin{figure}[tbph]
\begin{picture}(0,0)\includegraphics{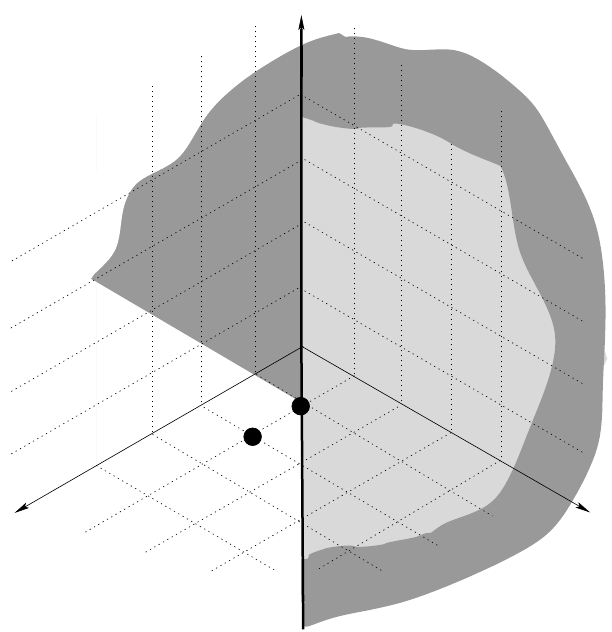}%
\end{picture}\setlength{\unitlength}{1579sp}\begingroup\makeatletter\ifx%
\SetFigFont\undefined\gdef\SetFigFont#1#2#3#4#5{\reset@font%
\fontsize{#1}{#2pt} \fontfamily{#3}\fontseries{#4}\fontshape{#5} \selectfont}%
\fi\endgroup%
\begin{picture}(7302,7558)(2386,-7391)
\put(2401,-5611){\makebox(0,0)[lb]{\smash{{\SetFigFont{10}{12.0}{\rmdefault}{\mddefault}{\updefault}{\color[rgb]{0,0,0}$h_1$}%
}}}}
\put(6001,-136){\makebox(0,0)[lb]{\smash{{\SetFigFont{10}{12.0}{\rmdefault}{\mddefault}{\updefault}{\color[rgb]{0,0,0}$h_3$}%
}}}}
\put(9376,-5686){\makebox(0,0)[lb]{\smash{{\SetFigFont{10}{12.0}{\rmdefault}{\mddefault}{\updefault}{\color[rgb]{0,0,0}$h_2$}%
}}}}
\put(5101,-4883){\makebox(0,0)[lb]{\smash{{\SetFigFont{10}{12.0}{\rmdefault}{\mddefault}{\updefault}{\color[rgb]{0,0,0}$x$}%
}}}}
\put(7126,-2536){\makebox(0,0)[lb]{\smash{{\SetFigFont{10}{12.0}{\rmdefault}{\mddefault}{\updefault}{\color[rgb]{0,0,0}$H$}%
}}}}
\put(4726,-2761){\makebox(0,0)[lb]{\smash{{\SetFigFont{10}{12.0}{\rmdefault}{\mddefault}{\updefault}{\color[rgb]{0,0,0}$H^{\prime}$}%
}}}}
\put(6076,-4711){\makebox(0,0)[lb]{\smash{{\SetFigFont{10}{12.0}{\rmdefault}{\mddefault}{\updefault}{\color[rgb]{0,0,0}$P_H(x)$}%
}}}}
\end{picture}
\caption{The half-space $H=\{h\in \mathbb{R}_{\max }^{3}\mid h_{2}\geq
h_{1}\}$ (light gray). The universal separating closed half-space $H^{\prime
}$ with apex $P_{H}(x)$ (dark gray), see Example~\ref{evax}.}
\label{fig0a}
\end{figure}

\section{The elements of best approximation by closed half-spaces}
\label{s07}

By the above results, the problem of best approximation by subsemimodules of 
$\mathbb{R}_{\max }^{n}$ can be reduced to that of best approximation by
closed half-spaces with finite apex. In the present section, more generally,
we give characterizations of the elements of best approximation by arbitrary
closed half-spaces in $\mathbb{R}_{\max }^{n}$ (that are not assumed to have
finite apex). If $a\in (\mathbb{R}_{\max }^{n})^{\ast }$ is a row vector and 
$x\in \mathbb{R}_{\max }^{n}$ a column vector, we define 
\begin{equation}
\func{Argmax}(a,x):=\{i\in \lbrack n]\mid a_{i}x_{i}=ax\}\enspace,
\label{argmax}
\end{equation}
which is always a nonempty set. The following is clear: 
\begin{equation}
ax\neq -\infty \;\Rightarrow \;\func{Argmax}(a,x)\subset \func{Supp}\,a\cap 
\func{Supp}\,x\enspace. \,  \label{argmax-supp}
\end{equation}

The next theorem gives an analytic characterization of the set of elements
of best approximation.

\begin{theorem}
\label{th-argmax} Let $a,b\in (\mathbb{R}_{\max }^{n})^{\ast }$ be row
vectors, $H$ the closed half-space (\ref{clohalfsp2}), and assume that the
sets 
\begin{equation}
I:=\func{Supp}\,a,\;\;J:=\func{Supp}\,b,  \label{cemai0}
\end{equation}
satisfy $I\cap J=\emptyset $ and $J\neq \emptyset $ $(b\neq -\infty ).$
Furthermore, let $x\in \mathbb{R}_{\max }^{n},x\not\in H$ be such that $%
d(x,H)<+\infty $. For an element $h\in \mathbb{R}_{\max }^{n}$ the following
assertions are equivalent :

\begin{enumerate}
\item  \label{best1} $h$ is a best approximation of $x$ in the closed
half-space $H$;

\item  \label{best2} $ah\geq bh\neq -\infty $ and 
\begin{equation}
x(bx)^{-1}(ah)\leq h\leq x(ax)^{-1}(bh);  \label{bla}
\end{equation}

\item  \label{best3} There exist $\lambda \neq -\infty $ and $i\in \func{%
Argmax}(a,x)$ such that the following conditions hold: 
\begin{eqnarray}
a_{i}h_{i}&=&\lambda \enspace,  \label{th-argmax5} \\
b_{j}h_{j}&=&\lambda \enspace,\quad \forall j\in \func{Argmax}(b,x)\enspace,
\label{cemai1} \\
x_{k}(bx)^{-1}\lambda &\leq& h_{k}\leq \big(P_{H}(x)\big)_{k}(ax)^{-1}\lambda
,\quad   \label{cemai2} \\
&&\qquad \forall k\in \lbrack n]\setminus (\func{Argmax}(b,x)\cup \{i\})%
\enspace;  \notag
\end{eqnarray}
moreover, in this case $\lambda $ is unique, namely $\lambda =ah=bh$.
\end{enumerate}
\end{theorem}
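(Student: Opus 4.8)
The plan is to first record what the hypotheses give about the distance. Since $d(x,H)<+\infty$, Theorem~\ref{th1new} forces $ax\neq-\infty$ and $d(x,H)=(ax)^{-1}bx$; as $x\notin H$ we have $ax<bx$, and both being maxima of finitely many elements of $\mathbb{R}_{\max}$ that exceed $-\infty$ they are in fact finite. I would work in ordinary additive notation on the common support, writing $t_k:=h_k-x_k$, $M:=\max_k t_k$, $m:=\min_k t_k$; by the support criterion \eqref{cemasa}, any candidate best approximation satisfies $\func{Supp}\,h=\func{Supp}\,x$, and then \eqref{D1p} gives $d(x,h)=M-m$. I then prove $(1)\Leftrightarrow(2)$ directly and $(2)\Leftrightarrow(3)$ by reading the inequalities coordinate by coordinate.

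For $(2)\Rightarrow(1)$: the two inequalities in \eqref{bla} read $t_k\geq ah-bx$ and $t_k\leq bh-ax$ for every $k$, hence $m\geq ah-bx$ and $M\leq bh-ax$. Subtracting, $d(x,h)=M-m\leq(bh-ax)-(ah-bx)=d(x,H)-(ah-bh)$. Since $ah\geq bh$ this is $\leq d(x,H)$, while $h\in H$ gives $d(x,h)\geq d(x,H)$; so $d(x,h)=d(x,H)$ and $ah=bh$, i.e. $h$ is a best approximation and $\lambda:=ah=bh$ is well defined. For $(1)\Rightarrow(2)$: from $d(x,h)=d(x,H)$ we get $M-m=bx-ax$. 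Picking an index $j_0$ with $b_{j_0}+x_{j_0}=bx$ yields $bh\geq b_{j_0}+h_{j_0}=bx+t_{j_0}\geq bx+m$, hence $bh-ax\geq(bx-ax)+m=M$, which is the right inequality of \eqref{bla}; symmetrically $ah\leq ax+M$ gives $ah-bx\leq m$, the left inequality, and $bh\geq bx+m>-\infty$.

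For $(2)\Leftrightarrow(3)$, having $ah=bh=\lambda$ I would rewrite \eqref{bla} as the coordinatewise squeeze $x_k(bx)^{-1}\lambda\leq h_k\leq x_k(ax)^{-1}\lambda$. The three items of $(3)$ then come out of this by case analysis on the location of the index, where the disjointness $I\cap J=\emptyset$ is what keeps the cases clean (so that $i\in I$ is never in $\func{Argmax}(b,x)\subset J$). From $ah=\lambda$ and the upper bound, for $k\in I$ one has $a_kh_k\leq(a_k x_k)(ax)^{-1}\lambda\leq\lambda$ with equality only when $k\in\func{Argmax}(a,x)$ and the upper bound is tight; this produces the index $i$ and \eqref{th-argmax5}. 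From $bh=\lambda$ and the lower bound, for $j\in\func{Argmax}(b,x)$ one has $b_jh_j\geq(b_jx_j)(bx)^{-1}\lambda=\lambda$, forcing equality \eqref{cemai1}. For the remaining $k$, the lower bound is exactly that of \eqref{cemai2}, while the upper bound $(P_H(x))_k(ax)^{-1}\lambda$ is obtained by combining the crude bound $x_k(ax)^{-1}\lambda$ coming from \eqref{bla} with the bound $h_k\leq(b_k)^{-1}\lambda$ coming from $bh=\lambda$ for $k\in J$, since by \eqref{e0} $(P_H(x))_k=x_k\wedge(b_k\backslash ax)$ realizes precisely this minimum. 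For the converse $(3)\Rightarrow(2)$ I would run the same computations backwards: $I\cap J=\emptyset$ and the bounds \eqref{cemai2} give $a_kh_k\leq\lambda$ for all $k\in I$ and $b_kh_k\leq\lambda$ for all $k\in J$, so $ah=bh=\lambda$, and then checking the squeeze \eqref{bla} at the three families of indices (using $ax<bx$ at $k=i$) recovers $(2)$; uniqueness of $\lambda$ follows since necessarily $\lambda=ah=bh$.

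The routine part is the $(1)\Leftrightarrow(2)$ step, which is just the distance formula plus monotonicity. The main obstacle is the sharpening of the upper bound in $(3)$: passing from the symmetric squeeze $x_k(ax)^{-1}\lambda$ available from \eqref{bla} to the projection value $(P_H(x))_k(ax)^{-1}\lambda$ requires simultaneously invoking $bh=\lambda$ and the explicit projection formula \eqref{e0}, and it is here that the disjoint-support hypothesis and the finiteness of $ax,bx$ must be tracked carefully to guarantee that all the $\func{Argmax}$ sets are nonempty and contained in the relevant supports.
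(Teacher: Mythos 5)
Your proof is correct, and its overall architecture --- establishing that $ax,bx$ are finite via Theorem~\ref{th1new} and $x\notin H$, then proving \ref{best1}$^{\circ}\Leftrightarrow$\ref{best2}$^{\circ}$ and \ref{best2}$^{\circ}\Leftrightarrow$\ref{best3}$^{\circ}$ --- is the same as the paper's; the genuine difference is in how you handle \ref{best1}$^{\circ}\Leftrightarrow$\ref{best2}$^{\circ}$. The paper stays inside the residuation calculus: it characterizes best approximation by $(x\backslash h)(h\backslash x)\geq (ax)(bx)^{-1}$ and extracts the two inequalities of \eqref{bla} from bounds such as $h\backslash x\leq (ah)^{-1}(ax)$ and $x\backslash h\leq (bx)^{-1}(bh)$. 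You instead pass to explicit coordinates on the common support, set $t_k=h_k-x_k$, and use \eqref{D1p} to write $d(x,h)=M-m$; your computation $M-m\leq (bh-ax)-(ah-bx)=d(x,H)-(ah-bh)$ is a shorter rendering of the same estimate, and it buys something the paper only obtains later, inside its proof of \ref{best2}$^{\circ}\Rightarrow$\ref{best3}$^{\circ}$: the equality $ah=bh$ drops out at once, so $\lambda$ is well defined from the start. Your \ref{best2}$^{\circ}\Leftrightarrow$\ref{best3}$^{\circ}$ then coincides with the paper's argument (the containment $\func{Argmax}(a,h)\subset\func{Argmax}(a,x)$, the coordinatewise squeeze, and the sharpening of the upper bound to $(P_H(x))_k(ax)^{-1}\lambda$ by combining $b_kh_k\leq\lambda$ with formula \eqref{e0}, which you rightly single out as the crux).

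Two details need patching. First, in the direction \ref{best2}$^{\circ}\Rightarrow$\ref{best1}$^{\circ}$ you justify $\func{Supp}\,h=\func{Supp}\,x$ by \eqref{cemasa}, but that criterion presupposes $d(x,h)<+\infty$, which you do not yet know when $h$ is only assumed to satisfy \ref{best2}$^{\circ}$; what you must (and easily can) check is that \eqref{bla} itself forces the supports to coincide: if $x_k=-\infty$ the upper bound in \eqref{bla} is $-\infty$ (since $bh<+\infty$ and $ax\in\mathbb{R}$), so $h_k=-\infty$, while if $x_k\in\mathbb{R}$ the lower bound is finite (since $ah\geq bh\neq-\infty$, $ah<+\infty$ and $bx\in\mathbb{R}$), so $h_k\neq-\infty$. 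With this line added, the reduction to \eqref{D1p} is legitimate. Second, in \ref{best3}$^{\circ}\Rightarrow$\ref{best2}$^{\circ}$, when you assert that \eqref{cemai2} gives $b_kh_k\leq\lambda$ for $k\in J\setminus\func{Argmax}(b,x)$, make explicit that this uses the $b_k\backslash(ax)$ component of $(P_H(x))_k=x_k\wedge(b_k\backslash(ax))$, namely $b_k(P_H(x))_k\leq b_k(b_k\backslash(ax))\leq ax$; the cruder bound $P_H(x)\leq x$ only yields $b_kh_k\leq b_kx_k(ax)^{-1}\lambda$, which can exceed $\lambda$ because $bx>ax$ (incidentally, the paper's own wording at this step is loose on exactly this point, and the fix is the one just described).
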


\begin{proof}
By Theorem~\ref{th1new} and our assumption, $d(x,H)={ax}\backslash {%
bx<+\infty }$, and so $ax\neq -\infty .$ Furthermore, since $x\not\in H$, we
have $bx>ax$, and in particular $bx\neq -\infty $.

\ref{best1}$^{\circ }\implies$\ref{best2}$^{\circ }$. Let $h$ be a best
approximation of $x$ in $H$, that is, $h\in H$ (so $ah\geq bh)$ and $%
d(x,h)=d(x,H)$, which is equivalent to the condition $\delta (x,h)\geq
\delta (x,H),$ that is, 
\begin{equation}
(x\backslash h)(h\backslash x)\geq {ax}(bx)^{-1}\enspace.  \label{equivopt}
\end{equation}
Since $d(x,h)=d(x,H)<+\infty $, $x$ and $h$ must have the same support (by
Lemma~\ref{l1}). Then, since $ax\neq -\infty $ we deduce that $ah\neq
-\infty $ (indeed, there is at least one index $i$ such that $%
a_{i}x_{i}=ax\neq -\infty $, and so $x_{i}\neq -\infty $; hence, since $x$
and $h$ have the same support, $h_{i}\neq -\infty $, and so $ah\geq
a_{i}h_{i}\neq -\infty $). Similarly, we deduce from $bx\neq -\infty $, that 
$bh\neq -\infty $. Furthermore,~(\ref{equivopt}) implies that $x\backslash
h\geq (h\backslash x)^{-1}(ax)(bx)^{-1}$ or equivalently (see (\ref{resid2}%
)), 
\begin{equation}
h\geq x(h\backslash x)^{-1}(ax)(bx)^{-1}\enspace.  \label{e:h2-1}
\end{equation}
Similarly, from (\ref{equivopt}) one also obtains 
\begin{equation}
x(x\backslash h)(ax)^{-1}(bx)\geq h\enspace.  \label{e:h2-3}
\end{equation}

Since $h\lambda \leq x$ implies $ah\lambda \leq ax$, it follows that $%
h\backslash x\leq \sup \{\lambda |ah\lambda \leq ax\}=(ah)\backslash
(ax)=(ah)^{-1}(ax)$, whence $(h\backslash x)^{-1}\geq (ax)^{-1}ah.$ Using
this inequality in~(\ref{e:h2-1}), we get $h\geq x(ah)(bx)^{-1}$, which is
the first inequality in~(\ref{bla}). Furthermore, $x\backslash h\leq
(bx)\backslash (bh)=(bx)^{-1}(bh)$ and together with (\ref{e:h2-3}), this
implies that $h\leq x(bh)(ax)^{-1}$, which is the second inequality in~(\ref
{bla}). This completes the proof of~the implication \ref{best1}$^{\circ }%
\implies$\ref{best2}$^{\circ }$.

\ref{best2}$^{\circ }\implies$\ref{best1}$^{\circ }$. Let $h$ be as in~\ref
{best2}$^{\circ }$. Then by $ah\geq bh$ we have $h\in H$. Using (\ref{resid2}%
), from the first inequality in~(\ref{bla}) we obtain that $x\backslash
h\geq (bx)^{-1}(ah)$. By the second inequality in~(\ref{bla}), and the fact
that $bh\neq -\infty $, we obtain that $h(bh)^{-1}(ax)\leq x$, which
implies, using (\ref{resid2}), that $h\backslash x\geq (bh)^{-1}(ax)$. Hence 
$(h\backslash x)(x\backslash h)\geq (bx)^{-1}(ah)(bh)^{-1}(ax)$ and since $%
ah\geq bh$, we obtain $(h\backslash x)(x\backslash h)\geq (bx)^{-1}(ax)$,
that is,~(\ref{equivopt}), which itself is equivalent to the condition $%
d(x,h)=d(x,H)$.

\ref{best2}$^{\circ }\implies$\ref{best3}$^{\circ }$. Let $h$ be as in~\ref
{best2}$^{\circ }$, set $\lambda =bh$, and pick some $i\in \func{Argmax}(a,h)
$ (the latter set is necessarily nonempty). We shall see later that 
\begin{equation}
\func{Argmax}(a,h)\subset \func{Argmax}(a,x)\enspace,  \label{e-actual}
\end{equation}
so that $i\in \func{Argmax}(a,x)$ as requested in \ref{best3}$^{\circ }$.

By~\ref{best2}$^{\circ }$, we must have $\lambda =bh\in \mathbb{R}$.
Multiplying the first inequality of~(\ref{bla}) by $b$, or the second one by 
$a$, we deduce that $ah\leq bh$, and since $ah\geq bh$ also holds by~\ref
{best2}$^{\circ }$, we get that $ah=bh=\lambda $. Consequently, since by our
choice $i\in \func{Argmax}(a,h),$ we have $a_{i}h_{i}=\lambda $. 

Using the fact that $ah=bh=\lambda $, we deduce from~\eqref{bla} that 
\begin{equation}
x_{k}(bx)^{-1}\lambda \leq h_{k}\leq x_{k}(ax)^{-1}\lambda ,\qquad \forall
k\in \lbrack n]\enspace.\label{e-newp}
\end{equation}
Furthermore, since $b_{k}h_{k}\leq bh=\lambda $, we deduce $h_{k}\leq
b_{k}\backslash \lambda =(b_{k}\backslash (ax))(ax)^{-1}\lambda $, for all $%
k\in \lbrack n]$. This, together with the second inequality in~(\ref{e-newp})
 and formula~%
\eqref{e0} for $P_{H}(x)$, implies 
\begin{equation*}
h_{k}\leq \big(x_{k}\wedge b_{k}\backslash (ax)\big)(ax)^{-1}\lambda =\big(%
P_{H}(x)\big)_{k}(ax)^{-1}\lambda \enspace.
\end{equation*}
Together with the first inequality in~\eqref{cemai2}, this establishes the
inequalities~\eqref{cemai2} for all $k\in \lbrack n]$, and a fortiori for
all $k\in \lbrack n]\setminus (\func{Argmax}(b,x)\cup \{i\})$. %

Now we show~\eqref{e-actual}. By the second part of (\ref{bla}) we have $%
a_{k}h_{k}\leq a_{k}x_{k}(ax)^{-1}\lambda $ for all $k\in \lbrack n],$ and
hence for any $k$ such that $a_{k}x_{k}<ax,$ we have $a_{k}h_{k}\leq
a_{k}x_{k}(ax)^{-1}\lambda <\lambda =ah,$ whence $k\notin \func{Argmax}(a,h),
$ which shows~\eqref{e-actual}. Since we already proved that $%
a_{i}h_{i}=\lambda $, we deduce~\eqref{th-argmax5}. 

Finally, if $j\in \func{Argmax}(b,x),$ that is, $b_{j}x_{j}=bx,$ then $%
\lambda =bx(bx)^{-1}\lambda =b_{j}x_{j}(bx)^{-1}\lambda \leq b_{j}h_{j}\leq
bh=\lambda $ (where the penultimate inequality is obtained by multiplying by 
$b_{j}$ the first inequality of (\ref{cemai2}) for $k=j),$ whence we obtain (%
\ref{cemai1}).

\ref{best3}$^{\circ }\implies$\ref{best2}$^{\circ }$. Let $h$, $i$ and $%
\lambda $ be as in~\ref{best3}$^{\circ }$.

We claim that the inequalities in~\eqref{cemai2} are valid for all $k\in
\lbrack n]$.

Indeed, since $i\in \func{Argmax}(a,x)$, then by $a_{i}x_{i}=ax<bx$ we have
\[
a_{i}x_{i}(bx)^{-1}\lambda <bx(bx)^{-1}\lambda =\lambda
=a_{i}h_{i}=a_{i}x_{i}(ax)^{-1}\lambda
\enspace .
\]
Moreover, since $%
a_{i}h_{i}=\lambda \in \mathbb{R}$, we have $a_{i}\neq -\infty $, and so $%
x_{i}(bx)^{-1}\lambda \leq h_{i}\leq x_{i}(ax)^{-1}\lambda $. Since $i\in
I\subset J^{c}$ by assumption, hence $b_{i}\backslash (ax)=+\infty $ (see (%
\ref{bepel})), and so, 
\begin{equation*}
h_{i}\leq (x_{i}\wedge b_{i}\backslash (ax))(ax)^{-1}\lambda =\big(P_{H}(x)%
\big)_{i}(ax)^{-1}\lambda \enspace.
\end{equation*}
We deduce that~(\ref{cemai2}) is valid for $k=i$.

Similarly, if $k\in \func{Argmax}(b,x)$, then by $b_{k}x_{k}=bx$ and $ax<bx,$
we have $b_{k}x_{k}(bx)^{-1}\lambda =\lambda <b_{k}x_{k}(ax)^{-1}\lambda $,
where by~(\ref{cemai1}) we have $\lambda =b_{k}h_{k}\in \mathbb{R}$; whence $%
b_{k}\neq -\infty $. Consequently, 
\begin{equation}
x_{k}(bx)^{-1}\lambda \leq h_{k}\leq x_{k}(ax)^{-1}\lambda \enspace.
\label{e-sandw}
\end{equation}
Moreover, by~(\ref{cemai1}), and the fact that $b_{k}\neq -\infty $, we get
that $h_{k}=b_{k}\backslash \lambda $ for all $k\in \func{Argmax}(b,x)$,
hence, 
\begin{equation*}
h_{k}=(b_{k}\backslash (ax))(ax)^{-1}\lambda \enspace.
\end{equation*}
This, together with the second inequality in~\eqref{e-sandw}, shows that 
\begin{equation*}
h_{k}=\big(x_{k}\wedge (b_{k}\backslash (ax))\big)(ax)^{-1}\lambda =\big(%
P_{H}(x)\big)_{k}(ax)^{-1}\lambda 
\end{equation*}
and so, (\ref{cemai2}) is valid for these $k$, which proves the claim.

Multiplying the second inequality in~(\ref{cemai2}) by $a_{k}$, and using $%
P_{H}(x)\leq x$, we obtain that $a_{k}h_{k}\leq a_{k}x_{k}(ax)^{-1}\lambda
\leq \lambda $ for all $k\in \lbrack n],$ and using~(\ref{th-argmax5}), we
get that $ah=\lambda $. Similarly, multiplying the second inequality in~(\ref
{cemai2}) by $b_{k}$, and using again $P_{H}(x)\leq x$, we obtain that $%
b_{k}h_{k}\leq b_{k}x_{k}(ax)^{-1}\lambda \leq bx(ax)^{-1}\lambda \leq
\lambda $ for all $k\in \lbrack n]$, and using~(\ref{cemai1}), we get that $%
bh=\lambda =ah$. This equality, together with~(\ref{cemai2}), which is valid
for all $k\in \lbrack n]$, imply~\ref{best2}$^{\circ }$.
\end{proof}

\begin{remark}
We observed in the proof of Theorem~\ref{th-argmax} that if $h$ is an
element of best approximation of $x$, the inequality~\eqref{cemai2} actually
holds for all $k\in \lbrack n]$. It follows that 
\begin{equation*}
P_{H}(x)(bx)^{-1}\lambda \leq x(bx)^{-1}\lambda \leq h\leq
P_{H}(x)(ax)^{-1}\lambda .
\end{equation*}
By comparing $h$ with the extreme terms in the above inequalities, and using
the characterization~\eqref{e-d-hilbert} of Hilbert's projective distance,
we deduce that 
\begin{equation*}
d(h,P_{H}(x))\leq (ax)^{-1}(bx)=d(x,H)
\end{equation*}
so that $h$ lies in the intersection of two balls of radius $d(x,H)$ in
Hilbert's projective metric, one being centered at the point $x$, the other
being centered at the point $P_{H}(x)$.
\end{remark}

\begin{remark}
\label{remark-face} One can give a geometric interpretation of the
conditions of Theorem \ref{th-argmax} in terms of faces of the ball with
center $x$ and radius $d(x,H).$ Indeed, let us fix some index $i\in \func{%
Argmax}(a,x)$, and let $F_{i}$ denote the set of vectors $h$ satisfying the
conditions \eqref{th-argmax5}, \eqref{cemai1}, \eqref{cemai2} of Theorem~\ref
{th-argmax}. Then the conditions that $a_{i}h_{i}=b_{j}h_{j}$ for all $j\in 
\func{Argmax}(b,x)$, together with $ax=a_{i}x$ and $bx=b_{j}x$, lead to $%
h_{i}h_{j}^{-1}=b_{j}a_{i}^{-1}=x_{i}x_{j}^{-1}(bx)(ax)^{-1}$. This can be
rewritten with the usual linear algebraic notation, as 
\begin{equation*}
h_{i}-h_{j}=x_{i}-x_{j}+d(x,H),\qquad \forall j\in \func{Argmax}(b,x).
\end{equation*}
Thus, if $p$ is the cardinality of $\func{Argmax}(b,x)$, we see that $h$
satisfies $p$ of the inequalities defining the facets of the ball of radius $%
d(x,H)$ in Hilbert metric, centered at the point $x$ (the ball in Hilbert
metric is a polyhedron in the usual sense, and so the standard notions of
faces and facets -maximal faces-, see~\cite{ziegler}, apply to it).
Therefore, the set $F_{i}$ consisting of these vectors $h$ lies in a $n-p$
dimensional face of this ball, and Theorem~\ref{th-argmax} gives a
disjunctive representation of the set of elements of best approximation, as
the union of the sets $F_{i}$ with $i\in \func{Argmax}(i,x)$. Note that the
inequalities~\eqref{cemai2} indicate that $F_{i}$ may be a strict subset of
a face of the latter ball, as illustrated in Figure~\ref{fig-projection2}
below (right).
\end{remark}

Let us give some geometric interpretations of best approximation by closed
half-spaces in simple particular cases.

\begin{example}
\label{rulter} Let $n=3,$ and 
\begin{equation*}
H:=\{h\in \mathbb{R}_{\max }^{3}\mid h_{2}\geq h_{1}\}=\{h\in \mathbb{R}%
_{\max }^{3}|ah\geq bh\},
\end{equation*}
where $a=(-\infty ,0,-\infty ),b=(0,-\infty ,-\infty ),$ and let $%
x_{1}>x_{2}>x_{3}$. Then, with the notations of the proof of Theorem \ref
{th-argmax}, we have $\func{Argmax}(a,x)=I=\{2\}$ and $\func{Argmax}%
(b,x)=J=\{1\}$, so necessarily $i=2$ and $h=(h_{1},h_{2},h_{3})^{T}$ is an
element of best approximation of $x$ in $H$ if and only if there exists $%
\lambda \in \mathbb{R}$ \ such that 
\begin{equation*}
h_{2}=h_{1}=\lambda ,\qquad x_{3}-x_{1}+\lambda \leq h_{3}\leq
x_{3}-x_{2}+\lambda .
\end{equation*}
The half-space $H$ was already represented in Figure~\ref{fig0a}, Assume now
that $x=(2,1,0)^{T}$, so that, as noted in Example~\ref{evax}, $%
P_{H}(x)=(1,1,0)^{T}$. %
By Remark~\ref{remark-face}, the set of elements of best approximation is
the set $F_{2}$, which lies in a two dimensional face of a ball in Hilbert's
metric. This set is represented by a bold segment in Figure~\ref{fig0}.

\end{example}

\begin{figure}[tbph]
\begin{picture}(0,0)\includegraphics{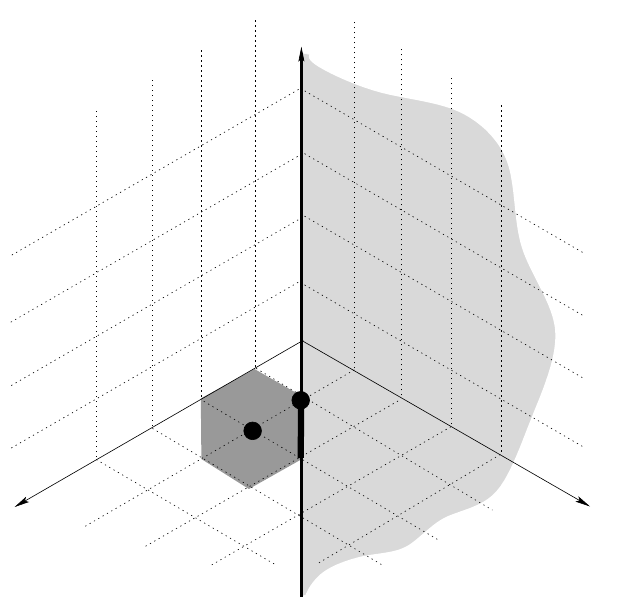}%
\end{picture}\setlength{\unitlength}{1579sp}\begingroup\makeatletter\ifx%
\SetFigFont\undefined\gdef\SetFigFont#1#2#3#4#5{\reset@font%
\fontsize{#1}{#2pt} \fontfamily{#3}\fontseries{#4}\fontshape{#5} \selectfont}%
\fi\endgroup%
\begin{picture}(7638,7262)(2386,-7170)
\put(2401,-5611){\makebox(0,0)[lb]{\smash{{\SetFigFont{10}{12.0}{\rmdefault}{\mddefault}{\updefault}{\color[rgb]{0,0,0}$h_1$}%
}}}}
\put(5701,-211){\makebox(0,0)[lb]{\smash{{\SetFigFont{10}{12.0}{\rmdefault}{\mddefault}{\updefault}{\color[rgb]{0,0,0}$h_3$}%
}}}}
\put(9376,-5686){\makebox(0,0)[lb]{\smash{{\SetFigFont{10}{12.0}{\rmdefault}{\mddefault}{\updefault}{\color[rgb]{0,0,0}$h_2$}%
}}}}
\put(7126,-2536){\makebox(0,0)[lb]{\smash{{\SetFigFont{10}{12.0}{\rmdefault}{\mddefault}{\updefault}{\color[rgb]{0,0,0}$H$}%
}}}}
\put(6136,-4689){\makebox(0,0)[lb]{\smash{{\SetFigFont{10}{12.0}{\rmdefault}{\mddefault}{\updefault}{\color[rgb]{0,0,0}$P_H(x)$}%
}}}}
\put(5101,-4883){\makebox(0,0)[lb]{\smash{{\SetFigFont{10}{12.0}{\rmdefault}{\mddefault}{\updefault}{\color[rgb]{0,0,0}$x$}%
}}}}
\end{picture}
\caption{Illustration of Theorem~\ref{th-argmax} (see Example~\ref{rulter}).
The half-space $H=\{h\in \mathbb{R}_{\max }^{3}\mid h_{2}\geq h_{1}\}$
(light gray); the maximal open ball in Hilbert's metric centered at point $%
x=(2,1,0)^{T}$ and contained in the complement of $H$ (dark gray): the
projection $P_{H}(x)$ is visible at its boundary. The set of elements of
best approximation of $x$ is the bold segment. }
\label{fig0}
\end{figure}

\begin{example}
\label{example-disjunctive} Consider now 
\begin{equation*}
H=\{h\in \mathbb{R}_{\max }^{3}\mid \max (h_{1},h_{3})\geq h_{2}\}
\end{equation*}
and $x=(0,1,0)^{T}$. Here, $a=(0,-\infty ,0)$ and $b=(-\infty ,0,-\infty )$.
We have $P_{H}(x)=(0,0,0)^{T}$, $d(x,P_{H}(x))=1$, and 
\begin{equation*}
\func{Argmax}(a,x)=\{1,3\},\qquad \func{Argmax}(b,x)=\{2\}\enspace.
\end{equation*}
Theorem~\ref{th-argmax} shows that the set of elements of best approximation
of $x$ is the union of the sets $F_{1}$ and $F_{3}$ defined in Remark~\ref
{remark-face}. Condition~\ref{best3}$^{\circ }$ of Theorem~\ref{th-argmax}
yields 
\begin{equation*}
F_{1}=\{h\in \mathbb{R}^{3}\mid h_{1}=h_{2},\;-1+h_{1}\leq h_{3}\leq h_{1}\}%
\enspace.
\end{equation*}
By symmetry, $F_{3}$ is obtained from $F_{1}$ by exchanging the variables $%
h_{1}$ and $h_{3}$. %
This is illustrated in Figure~\ref{fig-projection2} (left). 
\end{example}

\begin{figure}[htbp]
\begin{tabular}{cc}
\begin{picture}(0,0)\includegraphics{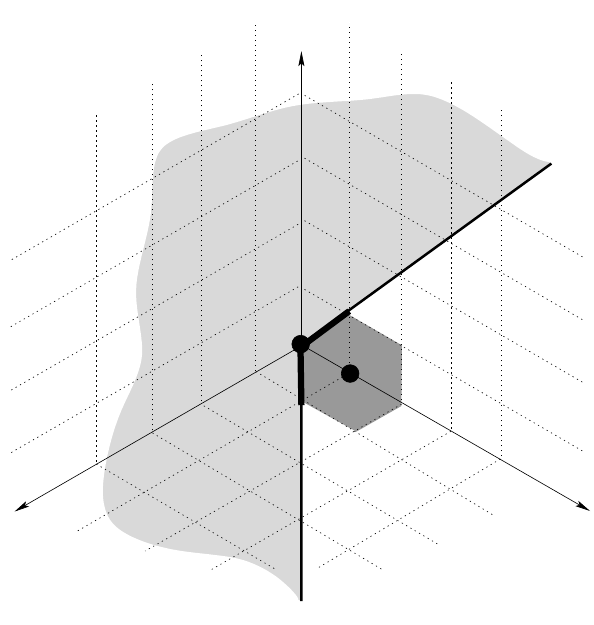}%
\end{picture}\setlength{\unitlength}{1579sp} \begingroup\makeatletter\ifx%
\SetFigFont\undefined\gdef\SetFigFont#1#2#3#4#5{\ \reset@font%
\fontsize{#1}{#2pt} \fontfamily{#3}\fontseries{#4}\fontshape{#5} \selectfont}%
\fi\endgroup%
\begin{picture}(7091,7490)(2386,-7322)
\put(2401,-5611){\makebox(0,0)[lb]{\smash{{\SetFigFont{10}{12.0}{\rmdefault}{\mddefault}{\updefault}{\color[rgb]{0,0,0}$h_1$}}}}} \put(5701,-211){\makebox(0,0)[lb]{\smash{{\SetFigFont{10}{12.0}{\rmdefault}{\mddefault}{\updefault}{\color[rgb]{0,0,0}$h_3$}}}}} \put(9376,-5686){\makebox(0,0)[lb]{\smash{{\SetFigFont{10}{12.0}{\rmdefault}{\mddefault}{\updefault}{\color[rgb]{0,0,0}$h_2$}}}}} \put(4576,-2386){\makebox(0,0)[lb]{\smash{{\SetFigFont{10}{12.0}{\rmdefault}{\mddefault}{\updefault}{\color[rgb]{0,0,0}$H$}}}}} \put(5106,-3671){\makebox(0,0)[lb]{\smash{{\SetFigFont{10}{12.0}{\rmdefault}{\mddefault}{\updefault}{\color[rgb]{0,0,0}$P_H(x)$}}}}} \put(6721,-4471){\makebox(0,0)[lb]{\smash{{\SetFigFont{10}{12.0}{\rmdefault}{\mddefault}{\updefault}{\color[rgb]{0,0,0}$x$}}}}} \end{picture}
& \begin{picture}(0,0)\includegraphics{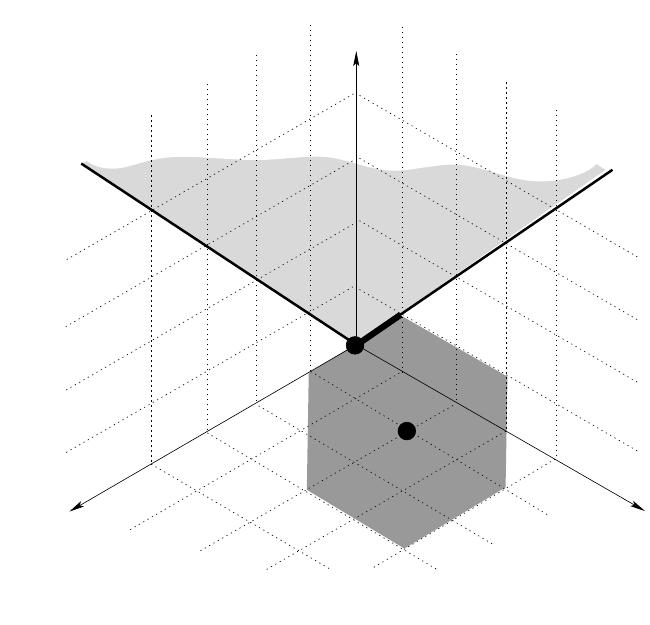}%
\end{picture}\setlength{\unitlength}{1579sp} \begingroup\makeatletter\ifx%
\SetFigFont\undefined\gdef\SetFigFont#1#2#3#4#5{\ \reset@font%
\fontsize{#1}{#2pt} \fontfamily{#3}\fontseries{#4}\fontshape{#5} \selectfont}%
\fi\endgroup%
\begin{picture}(7091,7490)(2386,-7322)
\put(2401,-5611){\makebox(0,0)[lb]{\smash{{\SetFigFont{10}{12.0}{\rmdefault}{\mddefault}{\updefault}{\color[rgb]{0,0,0}$h_1$}}}}} \put(5701,-211){\makebox(0,0)[lb]{\smash{{\SetFigFont{10}{12.0}{\rmdefault}{\mddefault}{\updefault}{\color[rgb]{0,0,0}$h_3$}}}}} \put(9376,-5686){\makebox(0,0)[lb]{\smash{{\SetFigFont{10}{12.0}{\rmdefault}{\mddefault}{\updefault}{\color[rgb]{0,0,0}$h_2$}}}}} \put(4426,-2311){\makebox(0,0)[lb]{\smash{{\SetFigFont{10}{12.0}{\rmdefault}{\mddefault}{\updefault}{\color[rgb]{0,0,0}$H$}}}}} \put(5551,-3586){\makebox(0,0)[lb]{\smash{{\SetFigFont{10}{12.0}{\rmdefault}{\mddefault}{\updefault}{\color[rgb]{0,0,0}$P_H(x)$}}}}} \put(6751,-5236){\makebox(0,0)[lb]{\smash{{\SetFigFont{10}{12.0}{\rmdefault}{\mddefault}{\updefault}{\color[rgb]{0,0,0}$x$}}}}} \end{picture}
\end{tabular}
\caption{Left. A set of elements of best approximation of a disjunctive
nature (Example~\ref{example-disjunctive}). Right. The set of elements of
best approximation may be a strict subset of a face of a Hilbert ball
(Remark~\ref{remark-face} and Example~\ref{ex-subface}).}
\label{fig-projection2}
\end{figure}

\begin{example}
\label{ex-subface} Let 
\begin{equation*}
H:=\{x\in \mathbb{R}_{\max }^{3}\mid h_{3}\geq \max (h_{1},h_{2})\}\enspace.
\end{equation*}
and $x=(1,2,0)^{T}$. It can be checked that $P_{H}(x)=(0,0,0)^{T}$, and that
the set of elements of best approximation of $x$ is a strict subset of a
face of the ball of radius $d(x,P_{H}(x))=2$, centered at $x$, see Figure~%
\ref{fig-projection2}.
\end{example}

\section{The cyclic projection algorithm to solve max-plus linear systems}
\label{s08}

The max-plus analogue, studied in~\cite{GS}, of the classical cyclic
projection technique allows one to compute the canonical projection of a
vector $u\in \mathbb{R}_{\max }^{n}$ onto a subsemimodule 
\begin{equation}
V:=V_{1}\cap \cdots \cap V_{p}  \label{inters}
\end{equation}
defined as the intersection of $p$ closed subsemimodules by successively
projecting onto $V_{1},V_{2},\ldots ,V_{p},V_{1},\ldots $. The application
of this idea to the case of intersection of half-spaces, thanks to Theorem~%
\ref{prop1}, will lead us to a new algorithm to solve the system of
inequalities 
\begin{equation}
Ax\geq Bx  \label{ineq1}
\end{equation}
where $A,B$ are $p\times n$ matrices with entries in $\mathbb{R}_{\max }$.

Let us first explain how the method of \cite{GS} leads to a general
algorithm. Formally, starting from an arbitrary finite vector $\xi ^{0}=u,$
we compute the sequence 
\begin{equation}
\xi ^{k+1}=P_{V_{(k+1\,\func{mod}\,p)}}(\xi ^{k})\qquad \forall k\geq 0%
\enspace,  \label{ineq2}
\end{equation}
where $(l\,\func{mod}\,p)$ denotes the unique number belonging to the set $%
[p]=\{1,\dots,p\}$ congruent to $l$ modulo $p$ and $P_{V_{j}}$ denotes the
canonical projection onto $V_{j}.$

\begin{theorem}
\label{th-cyclic} The sequence $\xi ^{k}$ generated by the cyclic projection
algorithm is non-increasing and converges to $P_{V}(u)$.
\end{theorem}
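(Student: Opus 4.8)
The plan is to exploit the two order-theoretic features of the algorithm: each canonical projector $P_{V_j}$ decreases its argument and is order preserving. First I would record three elementary properties of $P_{V_j}$, all immediate from the definition~\eqref{phx1}: (i) $P_{V_j}(x)\leq x$; (ii) $x\leq y$ implies $P_{V_j}(x)\leq P_{V_j}(y)$, since enlarging $x$ only enlarges the set $\{v\in V_j\mid v\leq x\}$ whose supremum is taken; and (iii) $w\in V_j$ implies $P_{V_j}(w)=w$, as $w$ itself is then the largest element of $V_j$ bounded above by $w$. From~(i) and the recursion~\eqref{ineq2} the sequence satisfies $\xi^{k+1}=P_{V_j}(\xi^k)\leq \xi^k$, which is at once the non-increasing assertion.

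Next I would establish the two-sided control $P_V(u)\leq \xi^k\leq u$. The upper bound is just the monotonicity of the sequence. For the lower bound I argue by induction: $\xi^0=u\geq P_V(u)$ by~(i), and if $\xi^k\geq P_V(u)$ then, since $P_V(u)\in V\subseteq V_{j}$ for $j=((k+1)\,\func{mod}\,p)$, property~(iii) gives $P_{V_j}(P_V(u))=P_V(u)$, so~(ii) yields $\xi^{k+1}=P_{V_j}(\xi^k)\geq P_{V_j}(P_V(u))=P_V(u)$. Because $(\xi^k)$ is non-increasing and bounded below by $P_V(u)$, each coordinate is a non-increasing sequence of elements of $\mathbb{R}_{\max }$ bounded below, hence it converges coordinatewise to its infimum $\xi^\infty:=\inf_k\xi^k\in\mathbb{R}_{\max }^{n}$, and this coincides with convergence in the usual topology of $\mathbb{R}_{\max }^{n}$.

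It remains to identify the limit as $P_V(u)$, and this is the only step that genuinely uses the cyclic structure together with the closedness of the factors $V_j$. Fix $j\in[p]$. The indices $k$ with $(k\,\func{mod}\,p)=j$ form an infinite set, and for each such $k$ we have $\xi^{k}=P_{V_{j}}(\xi^{k-1})\in V_j$; since the whole sequence converges to $\xi^\infty$, so does this subsequence, and as $V_j$ is closed we conclude $\xi^\infty\in V_j$. As $j$ was arbitrary, $\xi^\infty\in\bigcap_{j\in[p]}V_j=V$. Finally $\xi^\infty\in V$ together with $\xi^\infty\leq \xi^0=u$ forces $\xi^\infty\leq P_V(u)$, because $P_V(u)=\max\{v\in V\mid v\leq u\}$ is the largest element of $V$ bounded above by $u$; combined with the induction bound $\xi^\infty\geq P_V(u)$, this gives $\xi^\infty=P_V(u)$.

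I expect the main obstacle to be the passage to the limit inside each factor $V_j$: one must be sure that the monotone limit of the iterates is genuinely attained in $\mathbb{R}_{\max }^{n}$ and that coordinatewise decreasing convergence is convergence in the topology for which the $V_j$ are closed, so that $\xi^\infty\in V_j$ is legitimate. Everything else is bookkeeping with the order-theoretic properties~(i)--(iii) of the projectors; in particular no quantitative estimate on the Hilbert distance is needed, the argument being purely lattice-theoretic apart from this final use of closedness.
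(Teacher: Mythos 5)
Your proof is correct and follows essentially the same route as the paper's: non-increasingness from $P_{V_j}(x)\leq x$, the lower bound $\xi^k\geq P_V(u)$ by induction, existence of the monotone limit $\xi^\infty$, and membership of $\xi^\infty$ in each $V_j$ by closedness along the subsequence of indices $k$ with $(k\ \mathrm{mod}\ p)=j$, followed by a sandwich argument. The only cosmetic differences are that you obtain the induction step from monotonicity of $P_{V_j}$ together with $P_{V_j}(P_V(u))=P_V(u)$, where the paper writes $P_{V_j}(\xi^k)\geq P_V(\xi^k)\geq P_V(P_V(u))=P_V(u)$, and that you conclude via the maximality defining $P_V(u)$ where the paper passes through the identity $P_V(\xi^\infty)=P_V(u)$; these steps are equivalent.
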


\begin{proof}
Since $P_{V_{k}}(x)\leq x$ holds for all $x$ and for all $k$, we have 
\begin{equation*}
\xi ^{k+1}=P_{V_{(k+1\,\func{mod}\,p)}}(\xi ^{k})\leq \xi ^{k},\quad \quad
\forall k\geq 0,\;
\end{equation*}
so the sequence $\xi ^{k}$ is non-increasing. We prove by induction that 
\begin{equation*}
\xi ^{k}\geq P_{V}(u),\quad \quad \forall k\geq 0.
\end{equation*}
For $k=0$, this follows from $u\geq P_{V}(u)$. Assume now that $\xi ^{k}\geq
P_{V}(u)$. Since $V\subset V_{(k+1\,\func{mod}\,p)}$ and since $P_{V}(.)$ is
a monotone idempotent function, we have 
\begin{equation*}
\xi ^{k+1}=P_{V_{(k+1\,\func{mod}\,p)}}(\xi ^{k})\geq P_{V}(\xi ^{k})\geq
P_{V}(P_{V}(u))=P_{V}(u),
\end{equation*}
which concludes the proof by induction. \ Hence, the non-increasing sequence 
$\xi ^{k}$ must have a limit, $\xi ^{\infty }$, such that 
\begin{equation*}
u\geq \xi ^{\infty }\geq P_{V}(u).
\end{equation*}
Consequently, again since $P_{V}(.)$ is a monotone idempotent function, 
\begin{equation*}
P_{V}(u)\geq P_{V}(\xi ^{\infty })\geq P_{V}(P_{V}(u))=P_{V}(u),
\end{equation*}
whence $P_{V}(u)=$ $P_{V}(\xi ^{\infty }).$ Therefore, in order to show that
the equality $\xi ^{\infty }=P_{V}(u)$ holds, it suffices to show that $\xi
^{\infty }=P_{V}(\xi ^{\infty }),$ i.e., that $\xi ^{\infty }\in V.$

Observe that for all $m\in \lbrack p]$, $\xi ^{\infty }$ is a limit of the
subsequence of $\xi ^{k}$ obtained by taking all the indices $k$ such that $%
(k+1\,\func{mod}\,p)=m$. Since $V_{m}$ is closed, it follows that $\xi
^{\infty }\in V_{m}$. Since this holds for all $m\in \lbrack p]$, we deduce
that $\xi ^{\infty }\in V$.
\end{proof}

The following is an immediate corollary.

\begin{corollary}
The intersection $V=V_{1}\cap \cdots \cap V_{p}$ is not reduced to the $%
-\infty $ vector if and only if the cyclic projection algorithm, initialized
by taking $\xi ^{0}$ to be any finite vector $u$, converges to a non-$%
(-\infty )$ vector $\xi ^{\infty }$ (and then this vector is precisely $\xi
^{\infty }=P_{V}(u)\in V$).\qed
\end{corollary}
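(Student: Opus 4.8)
The plan is to read the corollary off directly from Theorem~\ref{th-cyclic}, which already guarantees that for any finite starting vector $u$ the cyclic projection sequence $\xi^{k}$ converges, and that its limit is exactly $\xi^{\infty}=P_{V}(u)\in V$. Consequently the only remaining content is the equivalence between $V\neq\{-\infty\}$ and $P_{V}(u)\neq-\infty$; once this is in hand, the stated dichotomy of limiting values follows immediately, since by the theorem the limit is \emph{always} $P_{V}(u)$.

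The easy half is that $V=\{-\infty\}$ forces $P_{V}(u)=-\infty$: by the definition~(\ref{phx1}) of the canonical projection, $P_{V}(u)=\max\{v\in V\mid v\leq u\}$, and when $V=\{-\infty\}$ this set is the singleton $\{-\infty\}$, so its supremum is $-\infty$. Thus in this case the algorithm converges to the $-\infty$ vector.

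For the substantive converse, I would suppose $V\neq\{-\infty\}$ and fix some $v\in V$ with $v\neq-\infty$, so that $\func{Supp}\,v\neq\emptyset$. The key step is to scale $v$ down so that it fits below the finite vector $u$: setting $\lambda:=\min_{i\in\func{Supp}\,v}(u_{i}-v_{i})\in\mathbb{R}$, one checks that $v\lambda\leq u$, the inequality being automatic on coordinates $i\notin\func{Supp}\,v$ (where $(v\lambda)_{i}=-\infty$) and holding on $\func{Supp}\,v$ by the choice of $\lambda$. Since $V$ is a subsemimodule it is stable under scalar multiplication, so $v\lambda\in V$; and because $\lambda$ is finite while $v\neq-\infty$, we have $v\lambda\neq-\infty$. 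By~(\ref{phx1}) the projection dominates every admissible element, so $P_{V}(u)\geq v\lambda\neq-\infty$, whence $P_{V}(u)\neq-\infty$.

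Combining the two halves with Theorem~\ref{th-cyclic} yields the corollary: if $V\neq\{-\infty\}$ the algorithm converges to $\xi^{\infty}=P_{V}(u)\neq-\infty$; conversely, if $\xi^{\infty}\neq-\infty$, then by the theorem $\xi^{\infty}=P_{V}(u)\in V$ exhibits a non-$(-\infty)$ element of $V$, so $V$ is not reduced to $\{-\infty\}$. The argument is entirely elementary, and the only place requiring care is the scaling observation: the finiteness of the initial vector $u$ is precisely what guarantees $\lambda\in\mathbb{R}$ (rather than $-\infty$), which is what lets $P_{V}(u)$ inherit the nonzero support of $v$. I do not anticipate any genuine obstacle beyond making this use of finiteness explicit.
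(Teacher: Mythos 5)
Your proof is correct and takes essentially the approach the paper intends: the corollary is stated there as an immediate consequence of Theorem~\ref{th-cyclic}, and the only content you add beyond that theorem---the scaling argument showing that $V\neq\{-\infty\}$ together with the finiteness of $u$ forces $P_{V}(u)\neq-\infty$---is exactly the detail the paper leaves implicit (the same observation appears explicitly in the proof of Theorem~\ref{theo-time}, where a finite $v\in V$ is scaled below $u$). There is nothing to correct.
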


Applying this algorithm to the case of intersection of half-spaces, and
using Theorem~\ref{prop1}, we obtain the following algorithm to solve the
system of inequalities (\ref{ineq1}), where $A,B$ are $p\times n$ matrices
with entries in $\mathbb{R}_{\max }$. We have 
\begin{equation}
V=H_{1}\cap \cdots\cap H_{p},  \label{inters2}
\end{equation}
where $H_{j}$ is the half-space 
\begin{equation}
H_{j}:=\{x\in \mathbb{R}_{\max }^{n}\mid A_{j}x\geq B_{j}x\}\quad \quad
\forall j\in[p],  \label{ineq4}
\end{equation}
with $A_{j}:=(A_{j1},\dots,A_{jn})$ and $B_{j}:=(B_{j1},\dots,B_{jn})$
denoting the $j$th rows of $A$ and $B$, respectively. Hence, by Theorem \ref
{prop1}, we obtain 
\begin{equation}
P_{H_{j}}(x)=x\wedge (B_{j}\backslash A_{j}(x))\quad \quad \forall x\in 
\mathbb{R}_{\max }^{n}, \forall j\in[p],  \label{ineq5}
\end{equation}
and thus, in particular, %
\begin{equation*}
\xi ^{k+1}=P_{H_{j}}(\xi ^{k})=\xi ^{k}\wedge (B_{j}\backslash A_{j}(\xi
^{k}))\quad \quad \forall k\geq 0 , \quad j:=(k+1\,\func{mod}\,p) \enspace .
\end{equation*}

Componentwise this means, by (\ref{phx1}) and (\ref{bepel}), that for each $%
k=0,1,\dots$ we have 
\begin{equation}
\xi _{i}^{k+1}=P_{H_{j}}(\xi _{i}^{k})=\xi _{i}^{k}\wedge (B_{ji}\backslash
(A_{j}(\xi ^{k}))\quad \quad \forall i\in[n]\enspace,  \label{ineq6}
\end{equation}
where again $j=(k+1\,\func{mod}\,p)$ and $A_{j}(\xi ^{k})=\oplus
_{i=1}^{n}A_{ji}\xi _{i}^{k}.$

An alternative method to the cyclic projection technique is the following 
\emph{power algorithm}, which is based on the observation that $Ax\geq Bx$
if and only if $x=B^{\sharp }Ax\wedge x$ (see (\ref{resi4})). The latter
fixed point problem can be solved by the iterative scheme 
\begin{equation}
\eta^{0}=u, \qquad \eta ^{k+1}=B^{\#}A\eta ^{k}\wedge \eta ^{k}, \qquad
\forall k\geq 0 \enspace,  \label{power}
\end{equation}
with $B^{\#}$ of (\ref{resi1}). This method may be thought of as a
generalization of the \emph{alternated projection algorithm} of Butkovi\v{c}
and Cuninghame-Green~\cite{CGB} which concerns the special case of the
linear system $Ay=Bz$ (the latter can be reduced to the former by setting $%
x=(y,z)$ and suitably extending the matrices $A$ and $B$).

In order to compare the power algorithm with the cyclic projection algorithm
we shall need the following ``sandwich theorem'':

\begin{theorem}
\label{tsandw}Consider the linear system $Ax\geq Bx$. Let $\eta ^{k}$ and $%
\xi ^{k}$ denote the sequences generated by the power and cyclic projection
algorithms, respectively, initialized with the same initial condition $u$.
Then 
\begin{equation}
P_{V}(u)\leq \xi ^{pk}\leq \eta ^{k}\quad \quad \forall k\geq 0\enspace.
\label{san}
\end{equation}
\end{theorem}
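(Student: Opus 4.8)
The plan is to prove the two inequalities in~\eqref{san} separately, by induction on $k$. The left inequality $P_{V}(u)\leq \xi^{pk}$ is already essentially contained in the proof of Theorem~\ref{th-cyclic}, where it is shown that $\xi^{k}\geq P_{V}(u)$ for \emph{all} $k\geq 0$; in particular this holds for the indices $k$ that are multiples of $p$, giving $P_{V}(u)\leq \xi^{pk}$ at no extra cost. So the real content is the right inequality $\xi^{pk}\leq \eta^{k}$, which asserts that after one full sweep of $p$ cyclic projections one has descended at least as far as after one step of the power algorithm.

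The key step is to relate one power-iteration step to a full cycle of $p$ projection steps. First I would make the algebraic translation explicit: using~\eqref{ineq5}, one cyclic step is $x\mapsto x\wedge(B_{j}\backslash A_{j}x)$ with $j=(k+1\,\func{mod}\,p)$, while one power step is $x\mapsto x\wedge B^{\#}Ax = x\wedge\bigwedge_{j\in[p]}(B_{j}\backslash A_{j}x)$, since $B^{\#}A=B\backslash A$ acts row-blockwise and the residuation of a stacked matrix is the infimum over the blocks (this is~\eqref{resi1} combined with $B^{\#}y=B\backslash y$). Thus $\eta^{k+1}=\eta^{k}\wedge\bigwedge_{j=1}^{p}(B_{j}\backslash A_{j}\eta^{k})$ is obtained from $\eta^{k}$ by intersecting with \emph{all} the half-space projections evaluated at the single point $\eta^{k}$, whereas the cyclic sweep $\xi^{pk}\mapsto\xi^{p(k+1)}$ applies the projections $P_{H_{1}},\dots,P_{H_{p}}$ \emph{in sequence}, each evaluated at the current (already decreased) iterate.

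The main obstacle is to show that applying the projections sequentially descends at least as fast as applying them simultaneously at the common starting point $\eta^{k}$. The induction hypothesis is $\xi^{pk}\leq\eta^{k}$, and I want $\xi^{p(k+1)}\leq\eta^{k+1}$. The mechanism is monotonicity of each $P_{H_{j}}$ together with the fact that each $P_{H_{j}}$ is \emph{contracting} ($P_{H_{j}}(x)\leq x$). Concretely, the cyclic sweep produces a chain $\xi^{pk}\geq \xi^{pk+1}\geq\cdots\geq\xi^{p(k+1)}$, where $\xi^{pk+j}=P_{H_{j}}(\xi^{pk+j-1})$. Because each intermediate iterate satisfies $\xi^{pk+j-1}\leq\xi^{pk}\leq\eta^{k}$, monotonicity of $P_{H_{j}}$ gives $\xi^{pk+j}=P_{H_{j}}(\xi^{pk+j-1})\leq P_{H_{j}}(\eta^{k})=\eta^{k}\wedge(B_{j}\backslash A_{j}\eta^{k})$. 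Since this bound holds for every $j\in[p]$ and the final sweep value $\xi^{p(k+1)}$ is below each of the intermediate values $\xi^{pk+j}$, one concludes
\begin{equation*}
\xi^{p(k+1)}\leq\bigwedge_{j=1}^{p}\big(\eta^{k}\wedge(B_{j}\backslash A_{j}\eta^{k})\big)=\eta^{k}\wedge\bigwedge_{j=1}^{p}(B_{j}\backslash A_{j}\eta^{k})=\eta^{k+1},
\end{equation*}
which closes the induction. The base case $k=0$ is immediate since $\xi^{0}=\eta^{0}=u$. The delicate point to get right is the order of quantifiers: one must use that $\xi^{p(k+1)}$ lies below \emph{each} intermediate iterate $\xi^{pk+j}$ (not just the last projection), so that the bound by $P_{H_{j}}(\eta^{k})$ can be taken simultaneously over all $j$ before forming the infimum.
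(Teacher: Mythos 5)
Your proof is correct and follows essentially the same route as the paper's: induction on $k$, using that the cyclic sequence is non-increasing, that each projection $P_{H_j}$ is monotone, and that the power step decomposes as the infimum $\eta^{k+1}=\eta^{k}\wedge\bigwedge_{j\in[p]}(B_{j}\backslash A_{j}\eta^{k})$ of all the single-row projections applied at $\eta^{k}$. The only difference is presentational: the paper argues coordinatewise, picking for each coordinate $j$ the row $i$ attaining the infimum in $(B^{\#}A\eta^{k-1})_{j}$ and comparing with step $i$ of the sweep, whereas you bound the end of the sweep by every $P_{H_j}(\eta^{k})$ simultaneously and then take the infimum over $j$ — a slightly cleaner, vectorized version of the same argument.
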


\begin{proof}
We shall use that the operator $\eta \rightarrow (B^{\#}A\eta )_{j}\wedge
\eta _{j}$ is monotone.

The first inequality follows from Theorem~\ref{th-cyclic}.

We now show that $\xi ^{p}\leq \eta ^{1}$. By (\ref{resi1}) we have 
\begin{equation*}
\eta _{j}^{1}=(B^{\#}Au)_{j}\wedge u_{j}=\wedge _{l=1}^{p}(-B_{lj}+^{\prime
}(A_{l}u))\wedge u_{j}=(-B_{ij}+^{\prime }(A_{i}u))\wedge u_{j},
\end{equation*}
for some $i\in \lbrack p]$. Hence, using that $\xi ^{k}$ is non-increasing, (%
\ref{ineq6}) for $k=0$ and (\ref{bepel}), it follows that 
\begin{equation*}
\xi _{j}^{p}\leq \xi _{j}^{i}=(B_{i}\backslash (A_{i}\xi ^{i-1}))_{j}\wedge
\xi _{j}^{i-1}\leq (B_{ij}^{-1}A_{i}u)\wedge u_{j}=\eta _{j}^{1}.
\end{equation*}

The inequality $\xi ^{pk}\leq \eta ^{k}$ is obtained by induction. For $k=1$
it is already proved. Assume now that it holds for $k$ replaced by $k-1$.
Then, by~(\ref{resi1}) we have 
\begin{eqnarray*}
\eta _{j}^{k}&=&(B^{\#}A\eta ^{k-1})_{j}\wedge \eta _{j}^{k-1}=\wedge
_{l=1}^{p}(-B_{lj}+^{\prime }(A_{l}\eta ^{k-1}))\wedge \eta _{j}^{k-1} \\
&=&(-B_{ij}+^{\prime }(A_{i}\eta ^{k-1}))\wedge \eta _{j}^{k-1},
\end{eqnarray*}
for some $i\in \lbrack p]$. Then using that that $\xi ^{k}$ is
non-increasing and 
\begin{equation*}
\xi ^{p(k-1)+i-1}\leq \xi ^{p(k-1)}\leq \eta ^{k-1},\quad \quad 
\end{equation*}
it follows that 
\begin{eqnarray*}
\xi _{j}^{pk}&\leq& \xi _{j}^{p(k-1)+i}=(B_{i}\backslash (A_{i}\xi
^{p(k-1)+i-1}))_{j}\wedge \xi _{j}^{p(k-1)+i-1} \\
&\leq& (B_{ij}^{-1}A_{i}\eta ^{k-1})\wedge \eta _{j}^{k-1}=\eta _{j}^{k}.
\end{eqnarray*}
\end{proof}

The correctness of the power algorithm follows from the next result.

\begin{theorem}
\label{theo-time} The sequence $\eta ^{k}$ produced by the power algorithm
initialized with $\eta ^{0}=u$ is non-increasing and converges to $P_{V}(u)$%
. Moreover, if $u$ has finite integer entries, if $V$ contains at least one
finite vector, and if all the entries of the matrices $A,B$ belong to $%
\mathbb{Z}\cup \{-\infty \}$, then, $\eta ^{m}=P_{V}(u)$ for all $m\geq
n\times d(x,V)$. %
\end{theorem}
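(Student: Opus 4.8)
The plan is to prove the two assertions separately. For the convergence statement (which needs no integrality hypothesis) I would argue exactly as in the proof of Theorem~\ref{th-cyclic}, with the cyclic projectors replaced by the single monotone operator $\Phi(\eta):=B^{\#}A\eta \wedge \eta$. For the finite-termination bound I would exploit that, under the integrality and finiteness hypotheses, all iterates remain finite integer vectors, so that the sum of coordinates can strictly decrease only a bounded number of times. Throughout I read the stated bound as $m\geq n\, d(u,V)$, since $x$ is not in scope.

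First I would record that $\Phi$ is monotone and that, by~\eqref{power}, $\eta^{k+1}=\Phi(\eta^k)\leq \eta^k$, so the sequence is non-increasing. Next I would show $\eta^k\geq P_V(u)$ for all $k$ by induction: the base case is $\eta^0=u\geq P_V(u)$, and for the step I use that $P_V(u)\in V$ means $AP_V(u)\geq BP_V(u)$, hence $\Phi(P_V(u))=P_V(u)$ by the equivalence~\eqref{resi4}; monotonicity of $\Phi$ then gives $\eta^{k+1}=\Phi(\eta^k)\geq \Phi(P_V(u))=P_V(u)$. Being non-increasing, bounded below by $P_V(u)$ and above by the finite vector $u$, the sequence converges coordinatewise to some $\eta^{\infty}\in \mathbb{R}_{\max}^n$ with $\eta^{\infty}\geq P_V(u)$. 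To see $\eta^{\infty}\in V$, I note that $\eta^{k+1}\leq B^{\#}A\eta^k$ is equivalent, by~\eqref{resi2}, to $B\eta^{k+1}\leq A\eta^k$; letting $k\to\infty$ and using that the max-plus linear maps $A,B$ are continuous on $\mathbb{R}_{\max}^n$, this yields $B\eta^{\infty}\leq A\eta^{\infty}$, i.e.\ $\eta^{\infty}\in V$. Since also $\eta^{\infty}\leq u$ and $P_V(u)=\max\{v\in V\mid v\leq u\}$, we get $\eta^{\infty}\leq P_V(u)$, whence $\eta^{\infty}=P_V(u)$.

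For the bound, since $V$ contains a finite vector $w$, choosing $\lambda\in\mathbb{R}$ with $\lambda\leq \min_i(u_i-w_i)$ gives the finite element $w\lambda\in V$ below $u$, so $P_V(u)$ is finite; as $P_V(u)\leq \eta^k\leq u$ and $A,B,u$ have entries in $\mathbb{Z}\cup\{-\infty\}$, an easy induction on the recursion shows every $\eta^k$ lies in $\mathbb{Z}^n$. Writing $\delta_i:=u_i-(P_V(u))_i\geq 0$, I would first check that $\min_j\delta_j=0$: otherwise $P_V(u)\lambda\leq u$ for some $\lambda>0$ with $P_V(u)\lambda\in V$ and $P_V(u)\lambda>P_V(u)$, contradicting the maximality defining $P_V(u)$. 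Hence, since $u$ and $P_V(u)$ both have full support, formula~\eqref{D1p} gives $d(u,V)=d(u,P_V(u))=\max_i\delta_i$, so the total drop satisfies $\sum_i\delta_i\leq n\max_i\delta_i=n\,d(u,V)$. Finally I use the nonnegative integer potential $S_k:=\sum_{i\in[n]}(\eta^k_i-(P_V(u))_i)$, with $S_0=\sum_i\delta_i$: whenever $\eta^k\neq P_V(u)$, $\eta^k$ cannot be a fixed point of $\Phi$, because a fixed point $\eta$ lies in $V$ and, satisfying $P_V(u)\leq\eta\leq u$, would force $\eta\leq P_V(u)$ and hence $\eta=P_V(u)$; thus $\eta^{k+1}<\eta^k$ strictly in some coordinate and $S_{k+1}\leq S_k-1$. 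Therefore $P_V(u)$ is reached after at most $S_0\leq n\,d(u,V)$ steps, and $\eta^m=P_V(u)$ for all such $m$.

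The main obstacle is the passage to the limit showing $\eta^{\infty}\in V$ when some coordinates of the iterates tend to $-\infty$: one must justify that the finitary max-plus maps $A$ and $B$ commute with the down-directed limit, i.e.\ are continuous at $-\infty$, which is what lets the inequality $B\eta^{k+1}\leq A\eta^k$ survive in the limit. The only other delicate point is the identity $d(u,V)=\max_i\delta_i$, resting on the observation $\min_j\delta_j=0$ that follows from the maximality of $P_V(u)$; this is exactly what converts the crude coordinatewise total drop $\sum_i\delta_i$ into the stated complexity bound $n\,d(u,V)$.
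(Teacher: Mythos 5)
Your proof is correct, and its finite-termination half is essentially the paper's own argument: the paper proves the same key claim $P_{V}(u)\backslash u=\mathbf{0}$ (your $\min_{j}\delta_{j}=0$, see \eqref{e-newclaim}), deduces $d(u,V)=\max_{i}\bigl(u_{i}-(P_{V}(u))_{i}\bigr)$, and runs the same integer potential $E(\eta)=\sum_{i\in[n]}\bigl(\eta_{i}-(P_{V}(u))_{i}\bigr)$, which must strictly decrease at every non-stationary step, giving $m\leq n\,d(u,V)$; you also correctly read the statement's $d(x,V)$ as a typo for $d(u,V)$, which is what the paper's proof actually bounds. Where you genuinely differ is the convergence half. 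The paper obtains the lower bound $\eta^{k}\geq P_{V}(u)$ by invoking the sandwich inequality \eqref{san} of Theorem~\ref{tsandw}, i.e.\ by comparison with the cyclic projection iterates $\xi^{pk}$ (whose analysis rests on Theorem~\ref{th-cyclic}), and then passes to the limit directly in the fixed-point recursion \eqref{power}. You instead argue self-containedly: monotonicity of $\Phi(\eta)=B^{\#}A\eta\wedge\eta$, the fixed-point identity $\Phi(P_{V}(u))=P_{V}(u)$ coming from \eqref{resi4}, and induction give $\eta^{k}\geq P_{V}(u)$ with no reference to cyclic projections; and for the limit you replace the fixed-point equation by the equivalent inequality $B\eta^{k+1}\leq A\eta^{k}$ (via \eqref{resi2}) and justify passing to the limit by continuity of the max-plus linear maps $A,B$ under non-increasing limits. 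Your route buys logical independence of the power algorithm's correctness from Theorems~\ref{th-cyclic} and~\ref{tsandw}, and it makes explicit the continuity point that the paper's ``passing to the limit'' glosses over (and which is slightly cleaner for $A,B$ than for the min-plus operator $B^{\#}A$ the paper takes limits through); the paper's route is shorter in context only because the sandwich theorem was needed anyway to compare the two algorithms. One point you use tacitly but should state: $P_{V}(u)\in\mathbb{Z}^{n}$, needed for $S_{k}$ to be an integer, which follows since each coordinate of the non-increasing integer sequence $\eta^{k}$ converges to the finite limit $(P_{V}(u))_{i}$ and is therefore eventually constant.
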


\begin{proof}
By (\ref{power}) and (\ref{san}), we have $\eta ^{k}\geq \eta ^{k+1}\geq
P_{V}(u)\enspace(k=0,1,\ldots ).$ Hence the non-increasing sequence $\eta
^{k}$ must have a limit, $\eta ^{\infty }$, such that $u\geq \eta ^{\infty
}\geq P_{V}(u)$. To show that the equality $\eta ^{\infty }=P_{V}(u)$ holds,
by the definition of $P_{V}$ it suffices to show that $\eta ^{\infty }\in V$%
. But, passing to the limit for $k\rightarrow \infty $ in (\ref{power}) we
obtain 
\begin{equation*}
\eta ^{\infty }=B^{\sharp }A\eta ^{\infty }\wedge \eta ^{\infty },
\end{equation*}
whence by (\ref{resi4}), it follows that $A\eta ^{\infty }\geq B\eta
^{\infty },$ that is, $\eta ^{\infty }\in V.$

Assume now that the conditions of the second part of the theorem hold, and
let $v$ denote a finite vector in $V$. Then, $u\geq v\lambda $, for some
finite scalar $\lambda $, and so $P_{V}(u)\geq v\lambda $ is finite.

Moreover, we already showed that $P_{V}(u)=\eta ^{\infty }$ is the limit of
the sequence of vectors $\eta ^{k}$, and it follows from the construction of
this sequence in~\eqref{power} that for all $k$, the entries of $\eta ^{k}$
belong to $\mathbb{Z}\cup \{-\infty \}$, as soon as the entries of $A,B$ and 
$u$ do. Therefore, $P_{V}(u)\in (\mathbb{Z}\cup \{-\infty \})^{n}$, and
since we observed that $P_{V}(u)$ is finite, we must have $P_{V}(u)\in 
\mathbb{Z}^{n}$. Moreover, $\eta ^{k}\in \mathbb{Z}^{n}$ since $\eta
^{k}\geq P_{V}(u)$.

We claim that 
\begin{equation}
P_{V}(u)\backslash u=0\enspace.  \label{e-newclaim}
\end{equation}
Indeed, the inequality $P_{V}(u)\backslash u\geq 0$ follows from $%
P_{V}(u)\leq u$. If we had $P_{V}(u)\backslash u>0$, then, we would have $%
P_{V}(u)\lambda \leq u$ for some $\lambda >0$, but then the vector $%
w:=P_{V}(u)\lambda >P_{V}(u)$ would be such that $w\in V$ and $w\leq u$,
contradicting the definition of $P_{V}(u)$ as the maximal element with the
latter properties. This proves~\eqref{e-newclaim}.

Hence, 
\begin{equation*}
d(u,V)=d(u,P_{V}(u))=\big((u\backslash P_{V}(u))(P_{V}(u)\backslash u)\big)%
^{-}=(u\backslash P_{V}(u))^{-}\enspace.
\end{equation*}
Since $u$ and $P_{V}(u)$ are finite vectors, $u\backslash P_{V}(u)$ is
finite, and so, using~\eqref{oppo}, we deduce from 
\begin{equation*}
u(u\backslash P_{V}(u))\leq P_{V}(u)
\end{equation*}
that 
\begin{equation*}
u\leq P_{V}(u)(u\backslash P_{V}(u))^{-}\enspace.
\end{equation*}
Hence, 
\begin{equation*}
P_{V}(u)\leq u\leq P_{V}(u)d(u,V)\enspace.
\end{equation*}
In order to analyze the complexity of the algorithm, we return to the usual
notation for the addition, and consider the function from $\mathbb{Z}^{n}$
to $\mathbb{Z}$, 
\begin{equation*}
E(\eta ):=\sum_{i\in \lbrack n]}\big(\eta _{i}-(P_{V}(u))_{i}\big).
\end{equation*}
Observe that $E(\eta )\geq 0$ for all $\eta \geq P_{V}(u)$. Moreover, if $%
\eta ^{m}=\eta ^{m+1}$, then, $\eta ^{k}=\eta ^{m}$ must hold for all $k\geq
m$, and so, $\eta ^{m}=\lim_{k}\eta ^{k}=P_{V}(u)$. In addition, if $m$ is
the smallest index such that $\eta ^{m}=\eta ^{m+1}$, then, the sequence of
integer vectors $\eta ^{0},\ldots ,\eta ^{m}$ is strictly decreasing. In
particular, at every step $k<m$, there it as least one coordinate $i\in
\lbrack n]$ such that $\eta _{i}^{k}>\eta _{i}^{k+1}$. Thus, 
\begin{equation*}
n\times d(u,V)\geq E(\eta ^{0})>E(\eta ^{1})>\cdots >E(\eta ^{m})=0\enspace.
\end{equation*}
Since $E(\eta ^{0}),\ldots,E(\eta ^{m})$ are integers, we
deduce that $m\leq n\times d(u,V)$. %
\end{proof}

\begin{corollary}
The intersection $V=V_{1}\cap \cdots \cap V_{p}$ is not reduced to the $%
-\infty $ vector if and only if the power algorithm, initialized by taking $%
\eta ^{0}$ to be any finite vector $u$, converges to a non-$(-\infty )$
vector $\eta ^{\infty }$ (and then this vector is precisely $\eta ^{\infty
}=P_{V}(u)\in V$).\qed
\end{corollary}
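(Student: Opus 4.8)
The plan is to read off essentially everything from Theorem~\ref{theo-time}. That theorem already asserts that the sequence $\eta^k$ produced by the power algorithm~\eqref{power}, initialized with any finite vector $\eta^0=u$, is non-increasing and converges to $\eta^\infty=P_V(u)$, with $P_V(u)\in V$. So the limit $\eta^\infty$ exists unconditionally, it lies in $V$, and it coincides with the canonical projection; the only point left to decide is exactly when this limit is the $-\infty$ vector and when it is not. This reduces the corollary to the elementary question of whether $P_V(u)\neq-\infty$.

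First I would prove the easy implication. If the power algorithm converges to a vector $\eta^\infty\neq-\infty$, then since $\eta^\infty=P_V(u)\in V$ by Theorem~\ref{theo-time}, the subsemimodule $V$ contains an element distinct from $-\infty$, so $V$ is not reduced to $\{-\infty\}$.

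Conversely, suppose $V\neq\{-\infty\}$ and fix a finite initial vector $u$. Then there is some $v\in V$ with $v\neq-\infty$, i.e.\ $v$ has at least one finite coordinate. Since $V$ is a subsemimodule it is stable under scalar multiplication, so $v\lambda\in V$ for every $\lambda\in\mathbb{R}$. Because $u$ is finite and the entries of $v$ lie in $\mathbb{R}\cup\{-\infty\}$, I can choose $\lambda\in\mathbb{R}$ small enough that $v\lambda\leq u$ coordinatewise: on the finite coordinates this is the requirement $v_i+\lambda\leq u_i$, and on the $-\infty$ coordinates it is automatic. For finite $\lambda$ the vector $v\lambda$ is still $\neq-\infty$. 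Hence $v\lambda$ is a non-$(-\infty)$ element of $V$ lying below $u$, and by the definition~\eqref{phx1} of the canonical projection, $P_V(u)=\max\{w\in V\mid w\leq u\}\geq v\lambda\neq-\infty$. Since $\eta^\infty=P_V(u)$, the algorithm converges to a non-$(-\infty)$ vector.

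There is no real obstacle here: all the analytic content (monotonicity, convergence, identification of the limit with $P_V(u)$, and the membership $P_V(u)\in V$) is already packaged in Theorem~\ref{theo-time}, which is why the statement is flagged as immediate. The only step needing a short argument is the forward direction, where one must produce a non-$(-\infty)$ point of $V$ below the finite starting vector $u$; this is precisely the rescaling $v\mapsto v\lambda$ above, exploiting that real scalars are invertible and that $u$ is finite so that a sufficiently negative $\lambda$ pushes $v\lambda$ under $u$ without annihilating its finite coordinates.
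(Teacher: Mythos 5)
Your proof is correct and follows exactly the route the paper intends: the paper states this corollary with no proof (as an immediate consequence of Theorem~\ref{theo-time}), and your argument supplies precisely the missing details --- the limit is unconditionally $P_V(u)\in V$, and $P_V(u)\neq-\infty$ iff $V\neq\{-\infty\}$, the nontrivial direction being handled by rescaling a non-$(-\infty)$ element $v\in V$ by a sufficiently negative real $\lambda$ so that $v\lambda\leq u$ while $v\lambda\neq-\infty$. The only blemish is cosmetic: in your closing paragraph you call this rescaling step the ``forward direction'' although it is the converse one; the mathematics itself is sound.
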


The power algorithm (\ref{power}) can be rewritten componentwise as 
\begin{equation}
\eta _{i}^{k+1}=(B_{i}^{\sharp }(A\eta ^{k}))\wedge \eta _{i}^{k},\qquad
\forall i\in \lbrack n],\quad \forall k\geq 0\enspace.  \label{power2}
\end{equation}
This should be compared with the cyclic projection algorithm for $Ax\geq Bx$%
, that is, (\ref{ineq6}). Note that one step of the power algorithm requires 
$O(m)$ operations, where $m$ is the total number of finite entries in the
matrices $A$ and $B$, whereas step $i$ of the cyclic projection algorithm
only requires $O(m_{i})$ operations, where $m_{i}$ is the total number of
finite entries of the rows $A_{i}$ and $B_{i}$. Since $\xi ^{k}$ and $\eta
^{k}$ decrease to the same limit, $P_{V}(u)$, the ``sandwich'' theorem \ref
{tsandw} shows that the cyclic projection algorithm is always at least as
fast as the power algorithm, since for the same effort of computation, it
produces a closer upper bound of $P_{V}(u)$. Indeed, computing $\xi ^{pk}$
requires an $O(k(m_{1}+\cdots +m_{p}))=O(km)$ time, and computing $\eta ^{k}$
also requires an $O(km)$ time.

\begin{example}
The following example shows that the cyclic projection algorithm may yield a
speedup by a factor $n$, by comparison with the power algorithm.

Consider the system of $n-1$ inequations in $n$ variables: 
\begin{equation*}
x_{1}\leq -1+x_{n},\;x_{2}\leq -1+x_{1},\ldots ,x_{n-1}\leq -1+x_{n-2}%
\enspace.
\end{equation*}
When $n=6$, this corresponds to the following $5\times 6$ matrices 
\begin{equation*}
B=\left( 
\begin{array}{cccccc}
0 & \cdot  & \cdot  & \cdot  & \cdot  & \cdot  \\ 
\cdot  & 0 & \cdot  & \cdot  & \cdot  & \cdot  \\ 
\cdot  & \cdot  & 0 & \cdot  & \cdot  & \cdot  \\ 
\cdot  & \cdot  & \cdot  & 0 & \cdot  & \cdot  \\ 
\cdot  & \cdot  & \cdot  & \cdot  & 0 & \cdot 
\end{array}
\right) \qquad A=\left( 
\begin{array}{cccccc}
\cdot  & \cdot  & \cdot  & \cdot  & \cdot  & -1 \\ 
-1 & \cdot  & \cdot  & \cdot  & \cdot  & \cdot  \\ 
\cdot  & -1 & \cdot  & \cdot  & \cdot  & \cdot  \\ 
\cdot  & \cdot  & -1 & \cdot  & \cdot  & \cdot  \\ 
\cdot  & \cdot  & \cdot  & -1 & \cdot  & \cdot 
\end{array}
\right) \enspace,
\end{equation*}
where $-\infty $ is represented by the ``$\cdot $'' symbol.

The cyclic projection algorithm, initialized with the zero vector, yields
the sequence 
\begin{eqnarray*}
\xi ^{0}&=&(0,\ldots ,0)^{T} \\
\xi ^{1}&=&(-1,0,0,\ldots ,0)^{T} \\
\xi ^{2}&=&(-1,-2,0,\ldots ,0)^{T} \\
&\vdots&  \\
\xi ^{n-1}&=&\xi ^{n}=(-1,-2,\ldots ,-(n-1),0)^{T}\enspace.
\end{eqnarray*}

The algorithm converges in $n$ steps, and every step takes $O(1)$
operations, which makes a total of $O(n)$ operations. Indeed, note that
every row $B_{j}$ and every row $A_{j}$ have $O(1)$ entry equal to $-\infty $%
, which implies that the update of $\xi $ can be done in only $O(1)$ time.

The power algorithm, initialized with the same vector, yields the sequence 
\begin{eqnarray*}
\eta ^{0}&=&(0,\ldots ,0)^{T} \\
\eta ^{1}&=&(-1,-1,\ldots ,-1,0)^{T} \\
\eta ^{2}&=&(-1,-2,-2,\ldots ,-2,0)^{T} \\
\eta ^{3}&=&(-1,-2,-3,-3,\ldots ,-3,0)^{T} \\
&\vdots&  \\
\eta ^{n-1}&=&\eta ^{n}=(-1,-2,\ldots ,-(n-1),0)^{T}\enspace.
\end{eqnarray*}
The algorithm also converges in $n$ steps, but every step now takes an $O(n)$
time, since computing every coordinate of $B\backslash (A\eta )$ requires a $%
O(1)$ time. Thus, the power algorithm requires a total of $O(n^{2})$
operations, and the cyclic projection algorithm shows a speedup of $n$.

In this example, the matrices are very sparse. One readily gets an example
of full matrices with the same speedup by replacing every $-\infty $ entry
by a value close enough to $-\infty $, which will not modify the sequences
produced by the cyclic projection and by the power algorithm. Now, every
step of the cyclic projection algorithm takes a $O(n)$ time, and every step
of the power algorithm requires a $O(n^{2})$ time. Hence, we keep a speedup
of $n$.
\end{example}

\begin{remark}
Theorem~\ref{theo-time} gives a bound for the convergence time of the power
and cyclic projection algorithms which is pseudo-polynomial, meaning that
the convergence time is bounded by a polynomial expression in the integers
constituting the input of the problem. To see this, let us recall the
explicit expression of the projector, from~\cite{CGQ}, 
\begin{equation*}
P_{V}(u)=\sup_{i\in I}v_{i}(v_{i}\backslash u)\enspace,
\end{equation*}
where $(v_{i})_{i\in I}$ is an arbitrary generating family of $V$. A
canonical choice of the generating family consists of representatives of the
extreme rays of $V$; then, the explicit bound in~\cite[Proposition~10]{AGK10}
shows that the finite entries of the vectors $v_{i}$, and so, $d(u,V)$, are
polynomially bounded in terms of the finite entries of the matrices $A$ and $%
B$.
\end{remark}

\begin{remark}
The following simple example shows that the convergence time of both
algorithms is actually only pseudo-polynomial. Assume that $V$ is defined by
the inequalities $x_{1}\leq \max (0,-1+x_{2})$, $x_{2}\leq x_{1}$, and let
us initialize both algorithms with $u=(k,k)^{T}$, so that $(0,0)^{T}=P_{V}(u)
$ and $d(u,P_{V}(u))=k$. Then, it can be checked that both algorithms take $k
$ iterations to converge, whereas for a polynomial time algorithm, a number
of iterations polynomial in $\log k$ would be required. Let us note in this
respect that the problem of solving systems of max-plus inequalities is
equivalent to solving mean payoff games (see~\cite{mohring,AGG}), and that
the existence of a polynomial time algorithm for mean payoff games is an
open question.
\end{remark}

\end{document}